\documentclass[a4paper,11pt,reqno]{customart}

\usepackage[utf8x]{inputenc}
\usepackage[margin=1in]{geometry}
\usepackage{verbatim}
\usepackage{color}
\usepackage[table]{xcolor}
\usepackage{listings}
\usepackage{amssymb,amsfonts,amsthm,amsmath}
\usepackage{url}
\usepackage{enumerate}
\usepackage{todonotes}
\usepackage[all]{xy}
\usepackage{multirow}
\usepackage{attachfile}
\usepackage{stmaryrd}
\usepackage{footnote}
\makesavenoteenv{tabular}
\usepackage{hyperref}

\makeatletter
\newcommand\footnoteref[1]{\protected@xdef\@thefnmark{\ref{#1}}\@footnotemark}
\makeatother

\hypersetup{
	pdfborder={0 0 0}
}

\newcommand{\imp}{\rightarrow}

\newcommand{\Pb}{\mathbb{P}}
\newcommand{\Qb}{\mathbb{Q}}

\newcommand{\Psf}{\mathsf{P}}

\newcommand{\Acal}{\mathcal{A}}

\newcommand{\Ccal}{\mathcal{C}}
\newcommand{\Dcal}{\mathcal{D}}

\newcommand{\Fcal}{\mathcal{F}}
\newcommand{\Gcal}{\mathcal{G}}
\newcommand{\Ucal}{\mathcal{U}}

\newcommand{\Ical}{\mathcal{I}}

\newcommand{\Mcal}{\mathcal{M}}
\newcommand{\Pcal}{\mathcal{P}}

\newcommand{\uh}{{\upharpoonright}}

\newcommand{\halts}{{\downarrow}}

\renewcommand{\setminus}{\smallsetminus}

\newcommand{\dbf}{\mathbf{d}}

\def\qt#1{``#1''}%




\newcommand{\s}[1]{\ensuremath{\sf{#1}}}

\DeclareMathOperator{\rca}{\s{RCA}_0}
\DeclareMathOperator{\piooca}{\Pi^1_1\s{CA}}
\DeclareMathOperator{\aca}{\s{ACA}}
\DeclareMathOperator{\atr}{\s{ATR}}
\DeclareMathOperator{\wkl}{\s{WKL}}

\DeclareMathOperator{\rt}{\s{RT}}

\DeclareMathOperator{\rrt}{\s{RRT}}

\DeclareMathOperator{\coh}{\s{COH}}

\DeclareMathOperator{\ts}{\s{TS}}

\DeclareMathOperator{\ipt}{\s{IPT}}

\DeclareMathOperator{\fs}{\s{FS}}


\newcommand{\qvdash}{\operatorname{{?}{\vdash}}}
\newcommand{\nqvdash}{\operatorname{{?}{\nvdash}}}

\newtheoremstyle{custom}
  {10pt}
  {10pt}
  {\normalfont}
  {}
  {\bfseries}
  {}
  { }
  {}

\theoremstyle{custom}

\usepackage{xcolor}	
\usepackage{soul}



\newtheorem{theorem}{Theorem}[section]
\newtheorem{lemma}[theorem]{Lemma}

\newtheorem{corollary}[theorem]{Corollary}

\theoremstyle{definition}
\newtheorem{definition}[theorem]{Definition}

\newtheorem*{statement}{Statement}

\theoremstyle{remark}

\newtheorem {question}[theorem]{Question}

\numberwithin{equation}{section}

\title{Pigeons do not jump high}
\author{
  Benoit Monin \and Ludovic Patey
}

\begin{document}

\begin{abstract}
The infinite pigeonhole principle for 2-partitions asserts the existence, for every set $A$, of an infinite subset of $A$ or of its complement. In this paper, we develop a new notion of forcing enabling a fine analysis of the computability-theoretic features of the pigeonhole principle. We deduce various consequences, such as the existence, for every set $A$, of an infinite subset of it or its complement of non-high degree. We also prove that every $\Delta^0_3$ set has an infinite low${}_3$ solution and give a simpler proof of Liu's theorem that every set has an infinite subset in it or its complement of non-PA degree.
\end{abstract}

\maketitle

\section{Introduction}

The infinite pigeonhole principle asserts the existence, for any $k$-partition of the integers, of an infinite subset of one of the parts. In particular, the pigeonhole principle for 2-partitions asserts that, for every set $A$, there is an infinite subset of $A$ or of $\overline{A}$. The pigeonhole principle can be seen as a mathematical problem, with instances and solutions. An \emph{instance} is a $k$-partition of the integers, and a \emph{solution} to an instance is an infinite subset of one of the parts. In this paper, we conduct a computability-theoretic study of the pigeonhole principle seen as a problem. More precisely, given an arbitrary instance of the pigeonhole principle, we show the existence of a \qt{weak} solution, for various computability-theoretic notions of weakness. Our main motivation is reverse mathematics.

\subsection{Reverse mathematics and Ramsey's theorem}

Reverse mathematics is a foundational program which seeks to determine the optimal axioms to prove ordinary theorems. It uses the framework of second-order arithmetics, with a base theory, $\rca$, capturing “computable mathematics". The early study of reverse mathematics revealed the existence of four linearly ordered big systems $\wkl$, $\aca$, $\atr$, and $\piooca$ (in increasing order), such that, given an ordinary theorem, it is very likely either to be provable in $\rca$, or provably equivalent to one of the four systems in $\rca$. These systems together with $\rca$ are known as the \qt{Big Five}. Among them, $\wkl$ stands for \qt{weak K\"onig's lemma}, and asserts that every infinite binary tree admits an infinite path, while $\aca$ is the comprehension axiom restricted to arithmetical formulas. $\wkl$ can be thought of as capturing compactness arguments, and $\aca$ is equivalent to the existence, for every set $X$, of the halting set relative to~$X$. See Simpson~\cite{Simpson2009Subsystems} for an introduction to reverse mathematics.

Among the theorems studied in reverse mathematics, Ramsey's theorem received a special attention from the community, since Ramsey's theorem for pairs historically was the first theorem known to escape the Big Five phenomenon. Given a set of integers $X$, $[X]^n$ denotes the set of all $n$-tuples over $X$. For a coloring $f : [\omega]^n \to k$, a set of integers $H$ is \emph{homogeneous} if $f$ is constant over $[H]^n$.

\begin{statement}[Ramsey's theorem]
$\rt^n_k$: \qt{Every $k$-coloring of $[\omega]^n$ admits an infinite homogeneous set}.	
\end{statement}

In particular, $\rt^1_k$ is the infinite pigeonhole principle for $k$-partitions. Ramsey's theorem and its consequences are notoriously hard to analyse from a computability-theoretic viewpoint. Jockusch~\cite{Jockusch1972Ramseys} proved that $\rt^n_k$ is equivalent to $\aca$ whenever $n \geq 3$, thereby showing that $\rt^n_k$ satisfies the Big Five phenomenon. The question whether $\rt^2_k$ implies $\aca$ was a longstanding open question, until Seetapun~\cite{Seetapun1995strength} proved that $\rt^2_k$ is strictly weaker than $\aca$. Later, Jockusch~\cite{Jockusch1972Ramseys,Jockusch197201} and Liu~\cite{Liu2012RT22} showed that $\rt^2_k$ is incomparable with $\wkl$, and therefore that $\rt^2_k$ is not even linearly ordered with the Big Five. See Hirschfeldt~\cite{Hirschfeldt2015Slicing} for an introduction to the reverse mathematics of Ramsey's theorem.

\subsection{Cohesiveness and the pigeonhole principle}

Cholak, Jockusch and Slaman~\cite{Cholak2001strength} made a breakthrough in the understanding of Ramsey's theorem for pairs, by decomposing $\rt^2_2$ into a cohesiveness principle, and the pigeonhole principle for $\Delta^0_2$ instances. An infinite set $C$ is \emph{cohesive} for a countable sequence of sets $R_0, R_1, \dots$ if $C \subseteq^{*} R_i$ or $C \subseteq^{*} \overline{R}_i$ for every $i \in \omega$, where $\subseteq^*$ means inclusion but for finitely many elements.

\begin{statement}[Cohesiveness]
$\coh$: \qt{Every countable sequence of sets has a cohesive set}.	
\end{statement}

\begin{statement}
$\mathsf{D}^n_k$: \qt{For every $\Delta^0_n$ $k$-partition of $\omega$, there is an infinite subset of one of the parts}.
\end{statement}

Cholak, Jockusch and Slaman~\cite{Cholak2001strength}, Mileti~\cite{Mileti2004Partition}
and Chong, Lempp and Yang~\cite{Chong2010role}, proved that $\rt^2_2$ is equivalent to $\coh \wedge \mathsf{D}^2_2$. The interest of such a decomposition comes from the combinatorial simplicity of $\coh$ and $\mathsf{D}^2_2$. Indeed, $\coh$ can be seen as a sequential version of $\rt^1_2$ with finite errors, while $\mathsf{D}^2_2$ is $\rt^1_2$ for $\Delta^0_2$ instances. One may naturally wonder whether such a decomposition is strict, that is, whether both $\coh$ and $\mathsf{D}^2_2$ are strictly weaker than $\rt^2_2$ over $\rca$. Hirschfeldt, Jockusch, Kjoss-Hanssen, Lempp and Slaman~\cite{Hirschfeldt2008strength} proved that $\coh$ is strictly weaker than $\rt^2_2$ over $\rca$. Much later, Chong, Slaman and Yang~\cite{Chong2014metamathematics} proved that $\mathsf{D}^2_2$ is strictly weaker than $\rt^2_2$ over $\rca$, answering a long-standing open problem. However, the latter proof strongly relies on non-standard models, in that it constructs a model of $\rca + \mathsf{D}^2_2$ containing only low sets, that is, sets $X$ such that $X' \leq_T \emptyset'$. However, Downey, Hirschfeldt, Lempp and Solomon~\cite{Downey200102} constructed a $\Delta^0_2$ set with no low infinite subset of it or its complement. This shows that there cannot be an $\omega$-model of $\rca + \mathsf{D}^2_2$ with only low sets, where an $\omega$-structure is a structure whose first-order part consists of the standard integers. The following question is arguably the most important question in reverse mathematics, not only by its self interest, but also by range of related questions, new techniques and intellectual emulation it generated in the computability-theoretic community.

\begin{question}
Is every $\omega$-model of $\mathsf{D}^2_2$ a model of $\rt^2_2$?
\end{question}

This question is equivalent to asking whether every $\omega$-model of $\mathsf{D}^2_2$ is a model of $\coh$. A particular way to prove such an implication would be, given a sequence of sets $R_0, R_1, \dots$, to construct a $\Delta^{0,\vec{R}}_2$ set $A$ such that every infinite subset of $A$ or $\overline{A}$ computes relative to $\vec{R}$ a cohesive set for $\vec{R}$.
Among the instances of $\coh$, the sequence of primitive recursive sets $\vec{R}$ is maximally difficult, in that for every computable sequence of sets $\vec{S}$, every cohesive set for $\vec{R}$ computes a cohesive set for $\vec{S}$. The sets cohesive for the sequence of primitive recursive sets are called \emph{p-cohesive}. Jockusch and Stephan~\cite{Jockusch1993cohesive} studied the p-cohesive degrees, and proved that these are the precisely the degrees whose Turing jump is PA over $\emptyset'$. The following question is therefore of particular interest.

\begin{question}\label{quest:delta2-non-pazp}
Is there a $\Delta^0_2$ set $A$ such that for every infinite set $H \subseteq A$ or $H \subseteq \overline{A}$, the jump of $H$ is PA over $\emptyset'$?
\end{question}

A degree $\dbf$ is \emph{high} if $\dbf' \geq \mathbf{0}''$. A particular way to answer positively the previous question would be by proving that there is a $\Delta^0_2$ set $A$ whose solutions are of high degrees. However, Cholak, Jockusch and Slaman~\cite{Cholak2001strength} proved that given a non-$\Delta^0_2$ set $C$, every $\Delta^0_2$ set admits an infinite subset $H$ of it or its complement such that $C$ is not $\Delta^{0,H}_2$. In particular, we can always obtain a solution $H$ of non-high degree.

By an empirical observation, many proofs of the existence of \qt{weak} solutions for $\Delta^0_2$ instances of the pigeonhole principle are actually proofs of such an existence for arbitrary (even non-$\Delta^0_2$) instances of the pigeonhole principle. For instance, Dzhafarov and Jockusch~\cite{Dzhafarov2009Ramseys} proved the existence, for every set $A$ and every non-computable set $C$, of a solution to $A$, that is, an infinite subset of $A$, which does not compute $C$. Liu~\cite{Liu2012RT22} proved the existence of a solution of non-PA degree, and more generally of solutions computing no enumeration of a closed set in the Cantor space~\cite{Liu2015Cone}. The second author~\cite{Patey2016reverse} proved the existence, for every set $A$ and every hyperimmune function $f$, of a solution $H$ to $A$ such that $f$ is $H$-hyperimmune, where a function $f$ is \emph{$H$-hyperimmune} if it is not dominated by any $H$-computable function. This observation could provide a partial answer to the difficulty of answering Question~\ref{quest:delta2-non-pazp}. Maybe there exists a (non-necessarily $\Delta^0_2$) set $A$ such that every solution has a jump of PA degree over $\emptyset'$, or even of high degree. Then, any answer to Question~\ref{quest:delta2-non-pazp} would necessarily rely on $\Delta^0_2$ approximations of the set $A$. This motivates our first main theorem: 

\begin{theorem}
Every set $A$ has an infinite subset $H \subseteq A$ or $H \subseteq \overline{A}$
of non-high degree.
\end{theorem}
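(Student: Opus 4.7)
The plan is to build $H$ as a sufficiently generic filter for the new forcing notion introduced earlier in this paper, whose conditions are tailored to yield infinite subsets of $A$ or $\overline{A}$. By Martin's high domination theorem, $H$ has non-high degree if and only if no $H$-computable function dominates every computable function, so it suffices, for each index $e$, to meet the requirement $\mathcal{R}_e$: either $\Phi_e^H$ is not total, or there is a computable function $f_e$ with $f_e(n) > \Phi_e^H(n)$ for infinitely many $n$. Standard genericity arguments will ensure that $H$ is infinite and contained in $A$ or $\overline{A}$; the heart of the proof is a density lemma showing that, for each condition $c$ and each index $e$, some extension of $c$ forces $\mathcal{R}_e$.

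For the density step, given $c$ and $e$, I would consider the function
\[
b_e^c(n) = \sup\bigl\{\, v : \text{some extension of } c \text{ forces } \Phi_e^H(n){\halts} = v \,\bigr\}.
\]
If $b_e^c(n_0) = \infty$ for some $n_0$, then a bookkeeping extension of $c$ will force $\Phi_e^H(n_0)$ to diverge, meeting $\mathcal{R}_e$. Otherwise $b_e^c$ is everywhere finite, and the definability properties of the new forcing notion will imply that $b_e^c$ is computable; then $f_e = b_e^c + 1$ is a computable function not dominated by the generic's $\Phi_e^H$, since every extension of $c$ already forces $\Phi_e^H(n) \leq b_e^c(n) < f_e(n)$ for every $n$. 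Iterating this density argument over all $e$, while interleaving with the requirements ensuring $H$ is infinite, produces the desired non-high solution.

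The main obstacle is verifying that $b_e^c$ is computable when everywhere finite. This amounts to showing that the forcing relation for $\Sigma^0_1$ statements about the generic is sufficiently effective, which in turn requires the reservoirs appearing in conditions to have low arithmetic complexity while still satisfying a largeness property strong enough to guarantee extraction of an infinite subset of $A$ or $\overline{A}$. The new forcing notion in the paper is designed precisely to reconcile these two demands, and I expect the density lemma to reduce to applying its largeness-preservation machinery to refine $c$ into an extension certifying $\mathcal{R}_e$.
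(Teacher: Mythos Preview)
Your density argument has a genuine gap. The implication ``$b_e^c(n_0)=\infty$ implies some extension forces $\Phi_e^H(n_0)\uparrow$'' is false in Mathias-type forcing. Take $\Phi_e$ to be the functional computing the principal function $p_H$, and suppose the initial segment at $c$ has at most $n_0$ elements. Adding any single element of the reservoir makes $\Phi_e^H(n_0)$ converge to that element; since the reservoir is infinite, $b_e^c(n_0)=\infty$, yet forcing divergence would require emptying the reservoir and hence making $H$ finite. Unboundedness of the possible values simply does not yield an extension forcing divergence. The other branch of your dichotomy is also problematic: the predicate ``some extension of $c$ forces $\Phi_e^H(n){\halts}=v$'' is $\Sigma^0_1$ \emph{relative to the reservoir}, and in the paper's forcing the reservoirs live in a Scott set $\Mcal$ containing~$\emptyset'$ and are not computable, so $b_e^c$ is only $\Mcal$-computable and $f_e=b_e^c+1$ is not the computable witness Martin's theorem demands. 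Concretely, if $p_A$ dominates every computable function then every infinite $H\subseteq A$ is already high, so on that side no computable $f_e$ can bound $p_H$; the argument must commit to the $\overline{A}$ side for global reasons your local bound cannot detect.

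The paper does not attempt first-jump control of $H$-computable functions at all. Non-highness is obtained as an immediate corollary of a preservation theorem one jump up: for every non-$\Sigma^0_2$ set $B$ there is a solution $H$ with $B$ not $\Sigma^{0,H}_2$, and one then takes $B=\emptyset''$ (or its complement), so that $\emptyset''$ is not $\Delta^{0,H}_2$ and hence $H'\not\geq_T\emptyset''$. The largeness-class machinery is used to produce a forcing question for $\Sigma^0_2$ formulas that is $\Sigma^0_1(\Mcal)$, hence $\Sigma^0_2$; this is exactly what allows diagonalization against $\Sigma^{0,H}_2$ definitions of~$\emptyset''$. The role of the new forcing is to make the $\Pi^0_2$ forcing relation (encoded through the set $C$ of largeness indices) definitionally simple, not to render $\Sigma^0_1$ forcing computable.
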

This theorem can be taken as a further evidence towards the intuition that Question~\ref{quest:delta2-non-pazp} does not depend on the $\Delta^0_2$ definability of the set $A$. Note that by an observation of the second author~\cite{Patey2016Open}, a negative answer to Question~\ref{quest:delta2-non-pazp} for non-$\Delta^0_2$ sets would have consequences on other statements studied in reverse mathematics, notably the increasing polarized Ramsey theorem for pairs ($\ipt^2_2$) introduced by Dzhafarov and Hirst~\cite{Dzhafarov2009polarized}.

\subsection{The hierarchies in reverse mathematics}

The computability-theoretic study of the pigeonhole principle is also motivated by questions on the strictness of hierarchies in reverse mathematics. Many consequences of Ramsey's theorem form hierarchies of statements, parameterized by the size of the colored tuples. A first example is Ramsey's theorem itself. Indeed, $\rt^{n+1}_k$ implies $\rt^n_k$ for every $n, k \geq 1$. By the work of Jockusch~\cite{Jockusch1972Ramseys}, this hierarchy collapses starting from the triples, and by Seetapun~\cite{Seetapun1995strength}, Ramsey's theorem for pairs is strictly weaker than Ramsey's theorem for triples. We therefore have 
$$
\rt^1_k < \rt^2_k < \rt^3_k = \rt^4_k = \dots
$$

Friedman~\cite{FriedmanFom53free} introduced the free set and thin set theorems in reverse mathematics, while Csima and Mileti~\cite{Csima2009strength} introduced and studied the rainbow Ramsey theorem. A coloring $f : [\omega]^n \to \omega$ is \emph{$k$-bounded} if each color occurs at most $k$ times. An infinite set of integers $H$ is \emph{thin} for $f$ if $f$ omits at least one color over $[H]^n$. We say that $H$ is \emph{free} for $f$ if for every $x \in H$, $H \setminus \{x\}$ is thin for $f$. Last, $H$ is a \emph{rainbow} for $f$ if each color occurs at most once on $[H]^n$.

\begin{statement}[Free set theorem]
$\fs^n$: \qt{Every coloring of $[\omega]^n$ admits an infinite free set}.
\end{statement}

\begin{statement}[Thin set theorem]
$\ts^n$: \qt{Every coloring of $[\omega]^n$ admits an infinite thin set}.
\end{statement}

\begin{statement}[Rainbow Ramsey theorem]
$\rrt^n_k$: \qt{Every $k$-bounded coloring of $[\omega]^n$ admits an infinite rainbow}.
\end{statement}

The reverse mathematics of these statements were extensively studied in the literature~\cite{Cholak2001Free,Csima2009strength,Kang2014Combinatorial,PateyCombinatorial,Patey2015Somewhere,Patey2016weakness,RiceThin,WangSome,Wang2013Rainbow,Wang2014Cohesive,Wang2014Definability,Wang2014Some}. In particular, these theorems form hierarchies which are not known to be strictly increasing over~$\rca$. 

\begin{question}\label{quest:strictness-hiearchies}
Are the hierarchies of the free set, thin set, and rainbow Ramsey theorem strictly increasing?
\end{question}

Partial results were however obtained. All these statements admit lower bounds of the form “For every $n \geq 2$, there is a computable instance of $\Psf^n$ with no $\Sigma^0_n$ solution", where $\Psf^n$ denotes any of $\rt^n_k$ (Jocksuch~\cite{Jockusch1972Ramseys}), $\rrt^n_k$ (Csima and Mileti~\cite{Csima2009strength}), $\fs^n$, or $\ts^n$ (Cholak, Giusto, Hirst and Jockusch~\cite{Cholak2001Free}). From the upper bound viewpoint, all these statements follow from Ramsey's theorem. Therefore, by Cholak, Jockusch and Slaman~\cite{Cholak2001strength}, every computable instance of $\Psf^1$ admits a computable solution, and every computable instance of $\Psf^2$ admits a low${}_2$ solution. These results are sufficient to show that $\Psf^1 < \Psf^2 < \Psf^3$ in reverse mathematics. This upper bound becomes too coarse at triples, since $\rt^3_2$ is equivalent to $\aca$, while Wang~\cite{Wang2014Some} surprisingly proved that $\Psf^n$ is strictly weaker than $\aca$ for every $n$ and $\Psf^n$ among $\fs^n$, $\ts^n$, and $\rrt^n_k$. In particular, Wang~\cite{Wang2014Cohesive} proved that every computable instance of $\rrt^3_k$ admits a low${}_3$ solution. The following question is still open. A positive answer would also answer positively Question~\ref{quest:strictness-hiearchies}.

\begin{question}\label{quest:instances-hierarchies-lown}
Does every computable instance of $\fs^n$, $\ts^n$, and $\rrt^n_k$ admit a low${}_n$ solution?
\end{question}

The known techniques to prove upper bounds to $\fs^n$, $\ts^n$, and $\rrt^n_k$,
are done by forcing with an inductive argument. This is in particular the case to prove that $\fs^n$, $\ts^n$ and $\rrt^n_k$ does not imply $\aca$ (Wang~\cite{Wang2014Some}), $\wkl$ (Patey~\cite{PateyCombinatorial}), and preserve multiple hyperimmunities (Patey~\cite{Patey2017Iterative}) for every $n$. The techniques are all obtained by proving the result for arbitrary instances of the pigeonhole principle, and then generalizing to other hierarchies by an inductive argument. In this paper, we therefore prove the following theorem, which introduces the machinery that hopefully will serve to answer positively Question~\ref{quest:instances-hierarchies-lown}.

\begin{theorem}
Every $\Delta^0_3$ set $A$ has an infinite subset $H \subseteq A$ or $H \subseteq \overline{A}$
of low${}_3$ degree.
\end{theorem}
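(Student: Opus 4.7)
The plan is to lift the non-high forcing construction of the preceding theorem one level up in the arithmetical hierarchy. Since $A$ is $\Delta^0_3$ we have $A \leq_T \emptyset''$, so the pigeonhole split for $A$ is decidable from $\emptyset''$. The target is an infinite $G$ contained in $A$ or $\overline{A}$ whose third jump is computable from $\emptyset'''$, equivalently, whose $\Sigma^0_3(G)$-theory is decidable from $\emptyset'''$. I would work with a Mathias-style forcing whose conditions are pairs $(F, X)$ with $F$ finite, $\max F < \min X$, and $X$ drawn from an appropriate ``largeness'' class $\Lcal$, the natural $\emptyset''$-relativization of the largeness class used for the non-high theorem. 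The class $\Lcal$ must be closed under $\Delta^0_3$ bipartitions and under the reservoir-pruning operations needed by the forcing lemmas below.

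The argument then proceeds through three lemmas. The \textbf{pigeonhole lemma} states that for every condition $(F, X)$, at least one of $(F, X \cap A)$ and $(F, X \cap \overline{A})$ is again a condition; this follows from closure of $\Lcal$ under $\Delta^0_3$ bipartitions and lets us commit once and for all to one side of $A$. The \textbf{extension lemma} ensures that conditions with arbitrarily large finite parts are dense, so a sufficiently generic $G$ is infinite. The key third lemma is a \textbf{$\Sigma^0_3$ forcing lemma}: for every $\Sigma^0_3$ formula $\varphi(G)$, the set of conditions deciding $\varphi$ is dense, and the decision is uniformly computable from $\emptyset'''$. Granted these, I would build, computably in $\emptyset'''$, a descending sequence of conditions meeting all density requirements and deciding each $\Sigma^0_3$ fact about the generic. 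The resulting $G$ is infinite, included in one side of $A$, and low${}_3$.

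The main obstacle is the $\Sigma^0_3$ forcing lemma while simultaneously preserving membership in $\Lcal$. Concretely, given a condition $(F, X)$ and a $\Sigma^0_3$ formula $\varphi$ that no extension forces, one must prune $X$ within $\Lcal$ so that every extension built from the pruned reservoir avoids $\varphi$. This is a preservation theorem asserting that $\Lcal$-largeness survives the passage to the ``$\varphi$-avoiding'' subreservoir, and reconciling it with the closure of $\Lcal$ under $\Delta^0_3$ partitions is the technical heart of the argument. My expectation is that the forcing machinery developed for the non-high theorem lifts through a uniform $\emptyset''$-relativization, the real work being to verify that the combinatorial largeness lemmas remain valid two jumps up, and that the quantifier complexity of ``deciding $\varphi$'' sits exactly at the $\emptyset'''$-level required for the final low${}_3$ conclusion.
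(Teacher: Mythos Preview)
Your proposal has a genuine structural gap: it misconstrues both the level of jump control and the shape of the forcing that the paper actually uses.

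First, the paper does \emph{not} decide $\Sigma^0_3$ facts about $G$. It controls the \emph{second} jump: using the $\Pb_1$ forcing (with largeness classes $C$ and the $\zeta$-function) it decides every $\Sigma^0_2$ formula, obtaining $H'' \leq_T Q$ for any $Q \gg \emptyset''$. The passage to low${}_3$ is then a one-line application of the relativized low basis theorem: pick $Q$ with $Q' \leq_T \emptyset'''$. Your plan to decide $\Sigma^0_3$ facts directly would require a further layer of machinery (a ``$\Qb_2$'' forcing, so to speak) that the paper neither develops nor needs. Relatedly, ``lifting the non-high construction one level up'' does not yield low${}_3$: non-high is strictly weaker than low${}_2$, and its relativization gives at best non-high${}_2$.

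Second, and more fundamentally, the single-sided Mathias conditions $(F,X)$ with an early commitment to one side of $A$ abandon exactly the mechanism that makes the complexity work out. The paper's $\Pb_1$ conditions are \emph{tree-shaped} tuples $(\sigma^0_s,\sigma^1_s,X_s,C,U_s : s<k)$ carrying \emph{both} stems $\sigma^0_s \subseteq A^0$ and $\sigma^1_s \subseteq A^1$ on every branch. The forcing question is \emph{disjunctive}: one asks whether $\varphi_0(G^0) \vee \varphi_1(G^1)$ can be forced, and depending on the answer one splits the branch (possibly into many pieces via a $k$-cover witnessing failure of largeness, or into two pieces $Z^0,Z^1$ obtained by $\wkl$ in the ideal $\Mcal$). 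One does \emph{not} know which resulting branches are valid; validity is a $\Pi^0_2(\Mcal)$ property, and the valid branches form a finitely-branching $\Pi^0_3$ tree through which the projector $P$ (computed by $Q \gg \emptyset''$) selects a path. Only at the very end, via a pairing argument along that path, does one fix the side $i<2$. Your ``pigeonhole lemma'' collapses this tree to a single branch and your commitment step fixes $i$ at the outset; after that, there is no disjunctive question available, and your $\Sigma^0_3$ forcing lemma has no evident proof. Indeed, the reservoir-pruning step you flag as ``the technical heart'' is precisely where the paper's branching is indispensable: when the question fails, the witnessing covers $Z^0,Z^1$ need not align with $A^0,A^1$, so one must keep both sides alive and fork.
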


This gives a partial answer to a question of Wang~\cite[Questions 6.1 and 6.2]{Wang2014Cohesive} and the second author~\cite[Question 5.4]{Patey2016Open} for the case $n = 3$.

\subsection{Definitions and notation}

A \emph{binary string} is an ordered tuple of bits $a_0, \dots, a_{n-1} \in \{0, 1\}$.
The empty string is written $\epsilon$. A \emph{binary sequence} (or real) is an infinite listing of bits $a_0, a_1, \dots$.
Given $s \in \omega$,
$2^s$ is the set of binary strings of length $s$ and
$2^{<s}$ is the set of binary strings of length $<s$. As well,
$2^{<\omega}$ is the set of binary strings
and $2^{\omega}$ is the set of binary sequences.
Given a string $\sigma \in 2^{<\omega}$, we use $|\sigma|$ to denote its length.
Given two strings $\sigma, \tau \in 2^{<\omega}$, $\sigma$ is a \emph{prefix}
of $\tau$ (written $\sigma \preceq \tau$) if there exists a string $\rho \in 2^{<\omega}$
such that $\sigma \rho = \tau$. Given a sequence $X$, we write $\sigma \prec X$ if
$\sigma = X \uh n$ for some $n \in \omega$.
A binary string $\sigma$ can be interpreted as a finite set $F_\sigma = \{ x < |\sigma| : \sigma(x) = 1 \}$. We write $\sigma \subseteq \tau$ for $F_\sigma \subseteq F_\tau$.
We write $\#\sigma$ for the size of $F_\sigma$. 

A \emph{binary tree} is a set of binary strings $T \subseteq 2^{<\omega}$ which is closed downward under the prefix relation. A \emph{path} through $T$ is an binary sequence $P \in 2^\omega$ such that every initial segment belongs to $T$. 

A \emph{Turing ideal} $\Ical$ is a collection of sets which is closed downward under the Turing reduction and closed under the effective join, that is, $(\forall X \in \Ical)(\forall Y \leq_T X) Y \in \Ical$ and $(\forall X, Y \in \Ical) X \oplus T \in \Ical$, where $X \oplus Y = \{ 2n : n \in X \} \cup \{ 2n+1 : n \in Y \}$. A \emph{Scott set} is a Turing ideal $\Ical$ such that every infinite binary tree $T \in \Ical$ has a path in $\Ical$. In other words, a Scott set is the second-order part of an $\omega$-model of $\rca + \wkl$.
A Turing ideal $\Mcal$ is \emph{countable coded} by a set $X$
if $\Mcal = \{ X_n : n \in \omega \}$ with $X = \bigoplus_n X_n$.
A formula is $\Sigma^0_1(\Mcal)$ (resp.\ $\Pi^0_1(\Mcal)$) if it is $\Sigma^0_1(X)$ (resp.\ $\Pi^0_1(X)$) for some $X \in \Mcal$.

Given two sets $A$ and $B$, we denote by $A < B$ the formula
$(\forall x \in A)(\forall y \in B)[x < y]$.
We write $A \subseteq^{*} B$ to mean that $A - B$ is finite, that is, 
$(\exists n)(\forall a \in A)(a \not \in B \imp a < n)$.
A \emph{$k$-cover} of a set $X$ is a sequence of sets $Y_0, \dots, Y_{k-1}$ such that $X \subseteq Y_0 \cup \dots \cup Y_{k-1}$.

\section{Main concepts}

The main contribution of this paper is a new notion of forcing enabling a finer analysis of the computability-theoretic aspects of the infinite pigeonhole principle. All the theorems obtained in Section~\ref{sect:applications} are direct applications of this notion of forcing by taking a sufficiently generic filter, or by an effectivization of the construction of a filter. In order to give a better grasp on the notion of forcing, we focus in this section on some essential features of its design.

\subsection{Forcing question}\label{subsect:forcing-question}

In computability theory, forcing is often specified by a partial order $(\Pb, \leq)$
of conditions. Each condition $c \in \Pb$ is given an interpretation $[c] \subseteq 2^\omega$, such that $[d] \subseteq [c]$ whenever $d \leq c$. Informally, $c$ can be seen as a partial approximation of the object we construct, and $[c]$ denotes the set of all possible objects which satisfy this partial approximation. Then, every filter $\Fcal$ induces a collection of sets $[\Fcal] = \bigcap_{c \in \Fcal} [c]$. Any set $G \in [\Fcal]$ is called a \emph{generic set}. Whenever the filter $\Fcal$ is sufficiently generic, $[\Fcal]$ is often a singleton $\{G\}$, in which case the generic set is uniquely determined. 

Such notions of forcing induce a \emph{forcing relation} $c \Vdash \varphi(G)$ defined over conditions $c \in \Pb$ and arithmetical formulas with one formal set parameter $\varphi(G)$. In particular, $c \Vdash \varphi(G)$ for a $\Delta^0_0$ formula
if $\varphi(G)$ holds for every set $G \in [c]$. The relation is defined inductively for more complex formulas, so that it satisfies the following main lemma:

\begin{lemma}\label{lem:forcing-relation-spec}
For every sufficiently generic filter $\Fcal$, every set $G \in [\Fcal]$ and every arithmetical formula $\varphi(G)$, $\varphi(G)$ holds if and only if $c \Vdash \varphi(G)$ for some condition $c \in \Fcal$.
\end{lemma}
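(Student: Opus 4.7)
The plan is to proceed by induction on the complexity of $\varphi$, leveraging whatever hierarchical definition of the forcing relation the authors set up at each level of the arithmetical hierarchy. The \qt{sufficient genericity} of $\Fcal$ will be understood in the usual sense: $\Fcal$ meets a specified countable collection of dense sets of conditions, namely one dense set per arithmetical formula whose truth value we wish to decide.

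For the base case, consider a $\Delta^0_0$ formula $\varphi(G)$. By definition $c \Vdash \varphi(G)$ iff $\varphi(G')$ holds for every $G' \in [c]$, so the implication from forcing to truth is immediate since $G \in [\Fcal] \subseteq [c]$. For the converse, let $D_\varphi = \{ c \in \Pb : c \Vdash \varphi(G) \text{ or } c \Vdash \neg \varphi(G) \}$. Because a $\Delta^0_0$ statement depends only on finitely many bits of $G$, any condition can be refined to one in which enough of the generic object is specified to evaluate $\varphi$, so $D_\varphi$ is dense. Sufficient genericity of $\Fcal$ forces $\Fcal$ to meet $D_\varphi$, and since $G \in [\Fcal]$ and $\varphi(G)$ actually holds, only the positive forcing alternative is compatible with $G$.

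For the inductive step, assume the result for all proper subformulas. For an existential formula $\exists n\, \psi(n,G)$, one declares $c \Vdash \exists n\, \psi(n,G)$ iff there exists $n$ with $c \Vdash \psi(n,G)$. The \qt{only if} direction then reduces to the induction hypothesis applied to $\psi(n_0,G)$ for a true witness $n_0$, and the \qt{if} direction is immediate from the induction hypothesis. The universal case $\forall n\, \psi(n,G)$ is dual: $c \Vdash \forall n\, \psi(n,G)$ is typically defined so that it excludes the existence of an extension forcing $\neg \psi(n,G)$ for some $n$, and the biconditional is obtained by a density argument applied to the set of conditions deciding $\forall n\, \psi$ in the appropriate sense.

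The main obstacle, and the reason the lemma is nontrivial, lies at the universal quantifier step: for the inductive argument to go through one needs the set $\{ c : c \Vdash \varphi \text{ or } c \Vdash \neg \varphi \}$ (or the appropriate analogue for the chosen forcing relation) to be dense at every level of the arithmetical hierarchy, which in turn requires that \qt{deciding} a $\Pi^0_n$ formula can itself be carried out by extending a condition at the right definability level. This is precisely where the forcing question introduced in Subsection~\ref{subsect:forcing-question} enters: the forcing relation at level $n+1$ is tied to a forcing question whose complexity matches that level, so that the density of the relevant decision sets can be verified by a $\Sigma^0_n$-or-$\Pi^0_n$ argument inside the ideal witnessing the conditions. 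Once the authors' design of the forcing guarantees that behavior, the inductive argument closes cleanly, and sufficient genericity of $\Fcal$ (meeting the countably many dense sets produced along the induction) yields the stated equivalence.
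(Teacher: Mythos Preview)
The paper does not supply a proof of this lemma. In context, Lemma~\ref{lem:forcing-relation-spec} is stated as a \emph{specification}: the paper writes that the forcing relation ``is defined inductively for more complex formulas, so that it satisfies the following main lemma,'' and then states the lemma without proof. It is being invoked as a standard fact about forcing relations in computability theory, not as something argued for within the paper. So there is no ``paper's own proof'' to compare against; you are supplying an argument the authors chose to omit.

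As to the content of your sketch: the inductive outline is the standard route, and the base case and the existential step are fine. But there is a conceptual conflation at the end. You invoke the forcing question from Subsection~\ref{subsect:forcing-question} to justify density at the universal step. That is not its role: the forcing question $\qvdash$ is introduced to control the \emph{definitional complexity} of deciding formulas (this is the content of Lemma~\ref{lem:forcing-question-spec}), which matters for effectivizing constructions and for preservation arguments. The truth lemma itself, Lemma~\ref{lem:forcing-relation-spec}, only needs that for each arithmetical $\varphi$ the set $\{c : c \Vdash \varphi \text{ or } c \Vdash \neg\varphi\}$ is dense, and this follows from the inductive definition of $\Vdash$ alone (for the canonical forcing relation, $c \Vdash \forall n\,\psi$ typically means no extension of $c$ forces $\exists n\,\neg\psi$, and density is then immediate). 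The forcing question is irrelevant here, and bringing it in muddies the distinction the paper is drawing between the two lemmas.
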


The computability-theoretic properties of the generic sets are strongly related to the existence, for every condition $c \in \Pb$, of a $\Sigma^0_n$-definable relation $c \qvdash \varphi(G)$ over $\Sigma^0_n$ formulas $\varphi(G)$ which satisfies the following properties:

\begin{lemma}\label{lem:forcing-question-spec}
Let $c \in \Pb$ be a condition, and $\varphi(G)$ be a $\Sigma^0_n$ formula.
\begin{itemize}
	\item[(a)] If $c \qvdash \varphi(G)$, then there is some $d \leq c$ such that $d \Vdash \varphi(G)$.
	\item[(b)] If $c \nqvdash \varphi(G)$, then there is some $d \leq c$ such that $d \Vdash \neg \varphi(G)$.
\end{itemize}
\end{lemma}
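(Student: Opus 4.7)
The statement is really a specification: one must construct a relation $\qvdash$ on $(\Pb, \leq)$ with the listed properties, for the particular notion of forcing introduced in this paper. The plan is to build $\qvdash$ level by level on the arithmetical hierarchy, proving (a) and (b) by a simultaneous induction on $n$, and checking at each step that the clause added to the definition has the right complexity.

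For the base case $n = 1$, consider a $\Sigma^0_1$ formula $\varphi(G) = \exists x\, \theta(x, G)$ with $\theta$ computable. The natural definition is to set $c \qvdash \varphi(G)$ iff some $d \leq c$ together with a finite amount of information about $G$ witnesses $\theta(x, G)$ for some $x$ uniformly on $[d]$. Property (a) is then immediate from the definition, as such a $d$ already satisfies $d \Vdash \varphi(G)$. Property (b) is the combinatorial heart of the whole paper: one must show that whenever no such witness can be driven into any extension of $c$, there is already a single $d \leq c$ whose interpretation $[d]$ entirely avoids all potential witnesses, so that $d \Vdash \neg \varphi(G)$. This is exactly where the specific design of the new forcing notion is essential, and where I expect the real work to lie.

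For the inductive step, given $\qvdash$ on $\Sigma^0_n$ formulas satisfying (a) and (b), extend to $\Sigma^0_{n+1}$ as follows. Write $\varphi(G) = \exists x\, \psi(x, G)$ with $\psi \in \Pi^0_n$, so that $\neg \psi \in \Sigma^0_n$ is covered by the inductive hypothesis, and declare $c \qvdash \varphi(G)$ iff there exists $x$ such that $c \nqvdash \neg \psi(x, G)$. Property (a) then follows by applying the inductive (b) to such a witness $x$: we obtain $d \leq c$ with $d \Vdash \psi(x, G)$, hence $d \Vdash \varphi(G)$. For (b), if $c \nqvdash \varphi(G)$, then $c \qvdash \neg \psi(x, G)$ for every $x$, and one must produce a \emph{single} $d \leq c$ with $d \Vdash \forall x\, \neg \psi(x, G)$.

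The main obstacle is precisely this uniform step in (b) of the inductive case: applying the inductive (a) only yields, for each $x$ separately, an extension $d_x \leq c$ with $d_x \Vdash \neg \psi(x, G)$, whereas a single condition is needed to simultaneously kill every $x$. The notion of forcing must therefore be rich enough to internalize the inductive hypothesis inside a single condition, typically by coding enough $\Sigma^0_n$ information into each $c \in \Pb$ so that the statement \qt{for every $x$, no extension forces $\psi(x, G)$} becomes itself expressible inside one condition. This is the design constraint that will drive the construction of $(\Pb, \leq)$ in the subsequent sections. The complexity bookkeeping, by contrast, is routine: the base clause is $\Sigma^0_1$ after a straightforward effectivization, and each inductive clause adds exactly one unbounded existential quantifier on top of a $\Pi^0_n$ condition, keeping the forcing question $\Sigma^0_{n+1}$-definable.
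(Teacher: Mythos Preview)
You correctly identify that Lemma~\ref{lem:forcing-question-spec} is stated in the paper as a \emph{specification} rather than a theorem with a self-contained proof: it lists the properties that a forcing question \emph{should} have, and the paper then constructs concrete instances in later sections. There is accordingly no proof of this lemma in the paper to compare against.

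Your proposed route, however, diverges from what the paper actually does. You sketch a general induction on $n$, defining $c \qvdash \exists x\,\psi(x,G)$ at level $n+1$ as $(\exists x)\, c \nqvdash \neg\psi(x,G)$ at level $n$. The paper does not attempt any such uniform inductive scheme. Instead it builds, by hand, a forcing question only at the specific levels it needs (essentially $\Sigma^0_1$ via Cholak--Jockusch--Slaman, and $\Sigma^0_2$ via the new $\Pb_1$ machinery), and crucially the forcing question it builds is \emph{disjunctive}: it decides a pair $\varphi_0(G^0)\vee\varphi_1(G^1)$ across the two sides of the pigeonhole instance, not a single formula. The combinatorics that make item~(b) work at level~2 are the largeness classes and the tree-of-branches structure of $\Pb_1$, not an abstract step-up from level~1.

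The gap you yourself flag in the inductive case~(b) is real and is not closed by your sketch. From $c \qvdash \neg\psi(x,G)$ for every $x$ you only get, via the inductive~(a), extensions $d_x \leq c$ with $d_x \Vdash \neg\psi(x,G)$; nothing in your scheme produces a single $d$ forcing $\forall x\,\neg\psi(x,G)$. Saying that the forcing must be ``rich enough to internalize the inductive hypothesis'' names the problem but does not solve it. The paper's answer is precisely the largeness-class component $C$ of a $\Qb_1$-condition: forcing a $\Pi^0_2$ statement is implemented by adding the relevant $\zeta$-indices to $C$ (Definition~\ref{def:qb-forcing-relation}(b)), and the forcing question at level~2 is designed so that its failure yields, via a $\Pi^0_1$ class of covers, a single enlarged $C$ witnessing the $\Pi^0_2$ side. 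That mechanism is specific and does not arise from the inductive recipe you propose.
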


Any forcing relation $\Vdash$ induces a forcing question $\qvdash$ defined by $c \qvdash \varphi(G)$ if and only if $(\exists d \leq c) d \Vdash \varphi(G)$. In the case of Cohen forcing, that is, forcing over binary strings with the suffix relation, the default forcing question has the good definitional properties, that is, deciding a $\Sigma^0_n$ formula is $\Sigma^0_n$. However, for many other notions of forcing, this forcing question is definitionally too complex, and one has to define custom forcing relations and forcing questions, to have the desired complexity.

For instance, consider the notion of forcing $(\Pb, \leq)$ whose conditions are infinite computable binary trees, and such that $S \leq T$ if $S \subseteq T$. The interpretation of $T$ is the collection $[T]$ of its paths. We can define a forcing relation for $\Sigma^0_1$ and $\Pi^0_1$ formulas as follows.

\begin{definition}
Let $\psi(G, x)$ be a $\Delta^0_0$ formula, and $T \in \Pb$.
\begin{itemize}
	\item[(a)] $T \Vdash (\exists x)\psi(G,x)$ if there is some $\ell \in \omega$ such that
	for every $\sigma \in T$ with $|\sigma| = \ell$, $\psi(\sigma, w)$ holds for some $w < \ell$.
	\item[(b)] $T \Vdash (\forall x)\psi(G,x)$ if for every $\sigma \in T$ and every $w < |\sigma|$, $\psi(\sigma, w)$ holds.
\end{itemize}
\end{definition}

First, note that if $T \Vdash \varphi(G)$ where $\varphi(G)$ is $\Sigma^0_1$ or $\Pi^0_1$, then $\varphi(G)$ will hold for every filter $\Fcal$ containing $T$, and every generic set $G$ for this filter. Then, define $T \qvdash (\exists x)\psi(G, x)$ to hold if and only if $T \Vdash (\exists x)\psi(G, x)$. Let's assume that the formula $\psi(G, x)$ is continuous, that is, if $\psi(\sigma, w)$ holds and $\sigma \prec \tau$, then $\psi(\tau, w)$ holds. If $T \qvdash (\exists x)\psi(G, x)$, then $T \Vdash (\exists x)\psi(G, x)$ by definition. If $T \nqvdash (\exists x)\psi(G, x)$, then the set $S = \{ \sigma \in T : (\forall w < |\sigma|) \neg \psi(\sigma, w) \}$ is an infinite subtree of $T$ such that $S \Vdash (\forall x)\neg \psi(G, x)$. Note that $T \qvdash (\exists x)\psi(G, x)$ is a $\Sigma^0_1$ formula, which satisfies Lemma~\ref{lem:forcing-question-spec}.

Having a forcing question whose definition has the same complexity as the formula it decides, yields a few preservation properties for free. Let $(\Pb, \leq)$ be a notion of forcing such that the relation $c \qvdash \varphi(G)$ is uniformly $\Sigma^0_n$ whenever $\varphi(G)$ is $\Sigma^0_n$, and satisfies Lemma~\ref{lem:forcing-relation-spec} and Lemma~\ref{lem:forcing-question-spec}. The following lemma holds.

\begin{lemma}
For every non-$\Sigma^0_n$ set $C$, and every $\Sigma^0_n$ formula $\varphi(G, x)$, the following set is dense in $(\Pb, \leq)$.
$$
D = \{ c \in \Pb : (\exists w \not \in C) c \Vdash \varphi(G, w) \vee (\exists w \in C) c \Vdash \neg \varphi(G, w) \}
$$
\end{lemma}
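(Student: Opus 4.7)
The plan is to prove density by contraposition: assume some condition $c_0 \in \Pb$ has no extension in $D$, and derive a $\Sigma^0_n$ definition of $C$, contradicting the hypothesis.

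So suppose for contradiction that no $d \leq c_0$ lies in $D$. Unfolding the definition of $D$, this means that for every $d \leq c_0$ and every $w \notin C$ we have $d \not\Vdash \varphi(G, w)$, and for every $d \leq c_0$ and every $w \in C$ we have $d \not\Vdash \neg\varphi(G, w)$. I would then consider the set
$$
S \;=\; \{\, w \in \omega : c_0 \qvdash \varphi(G, w) \,\},
$$
which is $\Sigma^0_n$ uniformly in $c_0$, by the assumption that $\qvdash$ has the same complexity as the formula it decides.

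The core step is to show $C = S$, using the two clauses of Lemma~\ref{lem:forcing-question-spec}. If $w \in S$, i.e.\ $c_0 \qvdash \varphi(G, w)$, then clause (a) provides some $d \leq c_0$ with $d \Vdash \varphi(G, w)$; since no extension of $c_0$ lies in $D$, we must have $w \in C$. Conversely, if $w \notin S$, i.e.\ $c_0 \nqvdash \varphi(G, w)$, then clause (b) provides some $d \leq c_0$ with $d \Vdash \neg\varphi(G, w)$; again, since $d \notin D$, we must have $w \notin C$. Hence $C = S$ is $\Sigma^0_n$, contradicting the non-$\Sigma^0_n$-ness of $C$.

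There is no real obstacle here: the argument is essentially a generic diagonalization, and the only thing needed is that the forcing question matches the complexity of the formula. The only mild subtlety is making sure one has both directions of the characterization $C = S$, which requires using both clauses of Lemma~\ref{lem:forcing-question-spec}; the positive side uses (a) applied to $w \notin C$ and the negative side uses (b) applied to $w \in C$, and these are precisely the two disjuncts in the definition of $D$.
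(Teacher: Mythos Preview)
Your proof is correct and follows essentially the same approach as the paper: both define the $\Sigma^0_n$ set $\{w : c_0 \qvdash \varphi(G,w)\}$ and use the two clauses of Lemma~\ref{lem:forcing-question-spec} to compare it with $C$. The only cosmetic difference is that the paper argues directly by picking a witness $w$ in the symmetric difference, whereas you phrase the same idea as a proof by contradiction establishing $C = S$.
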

\begin{proof}
Fix a condition $c \in \Pb$.
Let $W = \{ w \in \omega : c \qvdash \varphi(G, w) \}$. By assumption, the set $W$ is $\Sigma^0_n$, while $C$ is not. Let $w \in W \Delta C = (W \setminus C) \cup (C \setminus W)$.
If $w \in W \setminus C$, then $c \qvdash \varphi(G, w)$, so by Lemma~\ref{lem:forcing-question-spec}(a), there is some $d \leq c$ such that $d \Vdash \varphi(G, w)$. If $w \in C \setminus W$,
then $c \nqvdash \varphi(G, w)$, so by Lemma~\ref{lem:forcing-question-spec}(b), there is some $d \leq c$ such that $d \Vdash \neg \varphi(G, w)$. In both cases, $d$ belongs to~$D$.
\end{proof}

Then, for every sufficiently generic set $G$, $C$ will not be $\Sigma^{0,G}_n$. This is the notion of preservation of non-$\Sigma^0_n$ definitions, introduced by Wang~\cite{Wang2014Definability}. In particular, if some set $C$ is not $\Delta^0_n$, then either $C$ or $\overline{C}$ is not $\Sigma^0_n$, so by the same reasoning, $C$ will not be $\Delta^{0,G}_n$ for every sufficiently generic set $G$.

In many cases, the forcing question is compact in the following sense:

\begin{definition}
A forcing question $\qvdash$ is \emph{compact} if for every $c \in \Pb$ and every formula $\psi(G, x)$, $c \qvdash (\exists x)\psi(G, x)$ if and only if 
there is a finite set $U$ such that $c \qvdash (\exists x \in U)\psi(G, x)$.
\end{definition}

In particular, the forcing question for Cohen forcing and for the notion of forcing with computable binary trees is compact (see Wang~\cite[Section 3.2]{Wang2014Definability} for a definition of the forcing with computable binary trees). This yields other preservation properties for free.
A function $g$ \emph{dominates} a function $f$ if $g(x) \geq f(x)$ for every $x \in \omega$.
Given a set $X$, a function $f$ is \emph{$X$-hyperimmune} if is not dominated by any $X$-computable function. Let $(\Pb, \leq)$ be a notion of forcing with a compact forcing question satisfying the previous properties. The following lemma holds.

\begin{lemma}
For every $n$, every $\emptyset^{(n)}$-hyperimmune function $f$ and every Turing functional $\Phi_e$, the following set is dense in $(\Pb, \leq)$.
$$
D = \{ c \in \Pb : (\exists w) c \Vdash \Phi_e^{G^{(n)}}(w) \uparrow \vee (\exists w) c \Vdash \Phi_e^{G^{(n)}}(w) < f(w) \}
$$
\end{lemma}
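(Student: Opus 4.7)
The plan is to fix an arbitrary condition $c \in \Pb$ and split into two cases using the forcing question together with its compactness. For each $w, v \in \omega$, let $\psi_{w,v}(G)$ denote the formula asserting that $\Phi_e^{G^{(n)}}(w)$ halts with output strictly less than $v$. Define a partial function
\[
g(w) \;=\; \min\bigl\{\, v \in \omega : c \qvdash \psi_{w,v}(G) \,\bigr\},
\]
where $g(w)$ is undefined if no such $v$ exists. Because $\psi_{w,v}(G)$ is monotone in $v$, compactness of $\qvdash$ yields that $g(w)$ is defined precisely when $c \qvdash \Phi_e^{G^{(n)}}(w){\halts}$.

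First I would dispatch the case where $g(w)$ is undefined for some $w$: Lemma~\ref{lem:forcing-question-spec}(b) applied directly to the formula $\Phi_e^{G^{(n)}}(w){\halts}$ produces an extension $d \leq c$ with $d \Vdash \Phi_e^{G^{(n)}}(w){\uparrow}$, placing $d$ in $D$ through the first disjunct. Otherwise $g$ is total, and by the complexity hypothesis on the forcing question it is $\emptyset^{(n)}$-computable. Since $f$ is $\emptyset^{(n)}$-hyperimmune, $g$ does not dominate $f$, so there is some $w$ with $g(w) < f(w)$. For this $w$ the relation $c \qvdash \psi_{w,g(w)}(G)$ holds by the very definition of $g(w)$, and Lemma~\ref{lem:forcing-question-spec}(a) produces $d \leq c$ with $d \Vdash \psi_{w,g(w)}(G)$. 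Since $g(w) \leq f(w)$, this yields $d \Vdash \Phi_e^{G^{(n)}}(w) < f(w)$, so $d \in D$ through the second disjunct.

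The delicate point, and the main obstacle, is to justify that $g$ really is computable from $\emptyset^{(n)}$. Read naively, $\psi_{w,v}(G)$ is a $\Sigma^0_{n+1}$-in-$G$ statement, which would make $c \qvdash \psi_{w,v}(G)$ a $\Sigma^0_{n+1}$ predicate and put $g$ only at the $\emptyset^{(n+1)}$ level. Obtaining the sharper bound needed here depends on the fine-grained complexity control that the custom forcing question is designed to provide on jump-relativized formulas in the ambient notion of forcing; apart from this, the argument is a direct compactness-plus-hyperimmunity trade-off of the same shape as the non-$\Sigma^0_n$ preservation lemma established just above.
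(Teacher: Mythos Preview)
Your approach is essentially the paper's. The paper also fixes $c$, defines a partial function $g$ by searching for a finite set $U$ with $c \qvdash (\exists x \in U)\,\Phi_e^{G^{(n)}}(w){\halts}=x$ and setting $g(w)=\max U$, invokes compactness to conclude $c \nqvdash (\exists x)\,\Phi_e^{G^{(n)}}(w){\halts}=x$ when $g(w){\uparrow}$, and applies $\emptyset^{(n)}$-hyperimmunity of $f$ when $g$ is total. Your bound-$v$ formula $\psi_{w,v}$ is just the paper's formula with $U=\{0,\dots,v-1\}$.

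Your closing worry, however, is misplaced, and in fact your choice $g(w)=\min\{v:\cdots\}$ is the only genuine wrinkle. The formula $\psi_{w,v}(G)$ is indeed $\Sigma^0_{n+1}$ in $G$, so under the standing hypothesis $c \qvdash \psi_{w,v}(G)$ is $\Sigma^0_{n+1}$, that is, $\Sigma^0_1(\emptyset^{(n)})$. A \emph{partial} $\emptyset^{(n)}$-computable function can search for a witness to a $\Sigma^0_1(\emptyset^{(n)})$ predicate; this is exactly what the paper does, and why it simply asserts that $g$ is partial $\emptyset^{(n)}$-computable without any further ``fine-grained control''. What does \emph{not} work at the $\emptyset^{(n)}$ level is computing the \emph{minimum} such $v$, since that requires deciding the $\Sigma^0_{n+1}$ predicate at each smaller value and hence pushes $g$ to $\emptyset^{(n+1)}$, breaking the hyperimmunity step. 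Replace your $\min$ by ``the first $v$ found in a fixed $\emptyset^{(n)}$-effective dovetailing search'' (equivalently, use the paper's $\max U$ for the first $U$ found) and the obstacle disappears; nothing in the rest of your argument uses minimality of $g(w)$.
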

\begin{proof}
Fix a condition $c \in \Pb$.
Let $g$ be the partial $\emptyset^{(n)}$-computable function which on input $w$, searches for a finite set $U$ such that $c \qvdash (\exists x \in U) \Phi_e^{G^{(n)}}(w) \halts = x$. It it finds such a set, then $g(w) = \max U$. Otherwise, $g(w) \uparrow$. We have two cases. In the first case, $g$ is total. Then, by $\emptyset^{(n)}$-hyperimmunity of $f$, there is some $w$ such that $g(w) < f(w)$. Let $U$ be the finite set witnessing that $g(w)\halts$. In other words, $c \qvdash (\exists x \in U) \Phi_e^{G^{(n)}}(w) \halts = x$. By Lemma~\ref{lem:forcing-question-spec}(a), there is some $d \leq c$ such that $d \Vdash (\exists x \in U) \Phi_e^{G^{(n)}}(w) \halts = x$, hence $d \Vdash \Phi_e^{G^{(n)}}(w) \halts < f(w)$. If $g$ is partial, say $g(w)\uparrow$ for some $w$. Then by compactness of the forcing question, $c \nqvdash (\exists x)\Phi_e^{G^{(n)}}(w) \halts = x$. By Lemma~\ref{lem:forcing-question-spec}(a), there is some $d \leq c$ such that $d \Vdash \Phi_e^{G^{(n)}}(w)\uparrow$.
\end{proof}

Then, for every sufficiently generic set $G$, $f$ will be $G^{(n)}$-hyperimmune. This is the notion of preservation of hyperimmunity, introduced by the second author~\cite{Patey2017Iterative}.

Whenever the extension $d$ of Lemma~\ref{lem:forcing-question-spec} is obtained $\emptyset^{(n)}$-uniformly in $c$, one can effectivize the construction to obtain a $\emptyset^{(n)}$-computable filter while controlling the $n$th jump of the generic set $G$, and therefore obtain a set of low${}_n$ degree. In the case of Cohen forcing, this yields the existence of a  low 1-generic set, and in the case of the forcing with computable binary trees, this yields the low basis theorem (Jockusch~\cite{Jockusch197201}).

\subsection{Mathias forcing and the pigeonhole principle}

In this paper, given a set $A$, we want to build a “weak" infinite subset $H$ of $A$ or of $\overline{A}$. We actually construct two sets $G^0 \subseteq A$ and $G^1 \subseteq \overline{A}$ by a variant of Mathias forcing, and ensure that at least one of them is infinite and satisfies the desired weakness property. In order to obtain a forcing question with the good definitional complexity, we shall use a different notion of forcing depending on the complexity of the formulas we want to control. 

In the case of $\Sigma^0_1$ and $\Pi^0_1$ formulas, we fix a countable Scott set $\Mcal$, and use a notion of forcing whose conditions are tuples $(F^0, F^1, X)$, where $F^0 \subseteq A$ and $F^1 \subseteq \overline{A}$ are finite sets, and $X \in \Mcal$ is an infinite set such that $\max(F^0, F^1) < \min X$. Let $A^0 = A$ and $A^1 = \overline{A}$. Our setting is slightly different from Section~\ref{subsect:forcing-question} since each filter $\Fcal$ induces two generic sets $G^0$ and $G^1$, defined by $G^i = \bigcup \{ F^i : (F^0, F^1, X) \in \Fcal \}$ for each $i < 2$. A condition $c = (F^0, F^1, X)$ has therefore two interpretations $[c]^0$ and $[c]^1$, defined by $[c]^i = \{ H : F^i \subseteq H \subseteq (F^i \cup X) \cap A^i  \}$ for each $i < 2$. We also need to define two forcing relations depending on which of the generic sets $G^0$ and $G^1$ we control. The natural forcing relations are again too complex from a definitional point of view, and we need to define custom ones.

\begin{definition}
Let $\psi(G, x)$ be a $\Delta^0_0$ formula, $c = (F^0, F^1, X)$ and $i < 2$.
\begin{itemize}
	\item[(a)] $c \Vdash^i (\exists x)\psi(G,x)$ if there is some $w \in \omega$ such that
	$\psi(F^i, w)$ holds.
	\item[(b)] $c \Vdash^i (\forall x)\psi(G,x)$ if for every $w \in \omega$ and every $E \subseteq X$, $\psi(F^i \cup E, w)$ holds.
\end{itemize}
\end{definition}

Note that the definition of the forcing relation for $\Pi^0_1$ formulas is stronger than the canonical one, since it would suffice to require that $\psi(F^i \cup E, w)$ holds for every $w \in \omega$ and \emph{every $E \subseteq X \cap A^i$}. Because of this, it is not in general the case that,  given a $\Sigma^0_1$ formula $\varphi(G)$ and a side $i < 2$, the set of conditions $c$ such that $c \Vdash^i \varphi(G)$ or $c \Vdash^i \neg \varphi(G)$ is dense. However, Cholak, Jockusch and Slaman~\cite{Cholak2001strength} designed a disjunctive forcing question ensuring this property on at least one side. We now detail it.

\begin{definition}
Given a condition $c = (F^0, F^1, X)$ and two $\Sigma^0_1$ formulas $\varphi^0(G)$ and $\varphi^1(G)$, define $c \qvdash \varphi^0(G^0) \vee \varphi^1(G^1)$ to hold if for every 2-cover $Z^0 \cup Z^1 = X$, there is some side $i < 2$ and some finite set $E \subseteq Z^i$ such that $\varphi^i(F^i \cup E)$ holds.
\end{definition}

This forcing relation satisfies the following disjunctive property.

\begin{lemma}[Cholak, Jockusch and Slaman~\cite{Cholak2001strength}]
Let $c \in \Pb$ be a condition, and $\varphi^0(G)$ and $\varphi^1(G)$ be $\Sigma^0_1$ formulas.
\begin{itemize}
	\item[(a)] If $c \qvdash \varphi^0(G^0) \vee \varphi^1(G^1)$, then there is some $d \leq c$ and some $i < 2$ such that $d \Vdash^i \varphi^i(G)$.
	\item[(b)] If $c \nqvdash \varphi^0(G^0) \vee \varphi^1(G^1)$, then there is some $d \leq c$ and some $i < 2$ such that $d \Vdash^i \neg \varphi^i(G)$.
\end{itemize}
\end{lemma}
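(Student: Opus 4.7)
The plan is to prove (a) directly by feeding a specific $A$-definable 2-cover into the forcing question, and to prove (b) via a compactness argument that uses the Scott property of $\Mcal$ to pull a witnessing \emph{bad} partition of $X$ into the ideal.

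For (a), I would assume $c \qvdash \varphi^0(G^0) \vee \varphi^1(G^1)$ and apply the definition of $\qvdash$ to the particular 2-cover $Z^0 = X \cap A$, $Z^1 = X \cap \overline{A}$. This provides some side $i < 2$ and a finite $E \subseteq Z^i \subseteq A^i$ with $\varphi^i(F^i \cup E)$. I then form $d$ from $c$ by replacing $F^i$ with $F^i \cup E$ and the reservoir $X$ with $X \cap (\max E, \infty)$. The inclusion $E \subseteq A^i$ ensures that the new finite parts still sit on the correct sides; closure of $\Mcal$ under finite modification keeps the reservoir in $\Mcal$ and infinite. Writing $\varphi^i(G) = (\exists x)\psi^i(G, x)$ for some $\Delta^0_0$ formula $\psi^i$, the witness $w$ to $\psi^i(F^i \cup E, w)$ already sits in the finite part of $d$, so $d \Vdash^i \varphi^i(G)$ by the $\Sigma^0_1$ clause of the forcing relation.

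For (b), I would assume $c \nqvdash \varphi^0(G^0) \vee \varphi^1(G^1)$: there exists a 2-cover $(Z^0, Z^1)$ of $X$ such that for both $i < 2$ and every finite $E \subseteq Z^i$, $\varphi^i(F^i \cup E)$ fails. Replacing $Z^1$ by $X \setminus Z^0$ preserves this property, so I may assume the cover is a partition of $X$. Coding partitions of $X$ as elements of $2^\omega$ via the enumeration of $X$, the collection of such \emph{bad} partitions is a $\Pi^0_1$ class in $X$ together with the parameters of $\varphi^0, \varphi^1$, all of which are taken to lie in $\Mcal$. Since $\Mcal$ is a Scott set and the class is nonempty, it has a member in $\Mcal$, yielding a bad partition with $Z^0, Z^1 \in \Mcal$. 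As $X = Z^0 \sqcup Z^1$ is infinite, some $Z^i$ is infinite; set $d = (F^0, F^1, Z^i)$. This is a valid condition refining $c$: the reservoir $Z^i$ is infinite, lies in $\Mcal$, and $\min Z^i \geq \min X > \max(F^0 \cup F^1)$. For every $w \in \omega$ and every finite $E \subseteq Z^i$, $\neg \varphi^i(F^i \cup E)$ holds, hence $\neg \psi^i(F^i \cup E, w)$; by the $\Pi^0_1$ clause of the forcing relation, $d \Vdash^i \neg \varphi^i(G)$.

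The hard part will be the compactness step in (b): the raw negation of $c \qvdash \cdots$ provides only \emph{some} bad cover somewhere in $2^\omega$, which a priori need not lie in $\Mcal$, whereas the reservoir of $d$ must be in $\Mcal$. The Scott property of $\Mcal$—the very reason for working over a Scott set rather than a bare Turing ideal—closes this gap by pulling a bad partition into the ideal.
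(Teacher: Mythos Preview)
Your proposal is correct and follows essentially the same route as the paper: for (a) you instantiate the forcing question with the $A$-cover $Z^i = X \cap A^i$ and extend by absorbing the witness $E$ into the stem, and for (b) you use that the bad covers form a nonempty $\Pi^{0,X}_1$ class and invoke the Scott property of $\Mcal$ to get one in the ideal, then pick the infinite side. Your reduction from covers to partitions in (b) is a harmless cosmetic step the paper omits, but otherwise the two arguments are the same.
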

\begin{proof}
Suppose $c \qvdash \varphi^0(G^0) \vee \varphi^1(G^1)$ holds. Then letting $Z^0 = X \cap A^0$ and $Z^1 = X \cap A^1$, there is some side $i < 2$ and some finite set $E \subseteq X \cap A^i$ such that $\varphi^i(F^i \cup E)$ holds. The condition $d = (F^i \cup E, F^{1-i}, X \cap (\max E, \infty))$ is an extension of $c$ such that $d \Vdash^i \varphi^i(G)$.

Suppose now that $c \nqvdash \varphi^0(G^0) \vee \varphi^1(G^1)$. Let $\Pcal$ be the collection of all the 2-covers $Z^0 \cup Z^1 = X$ such that for every $i < 2$ and every finite set $E \subseteq Z^i$, $\varphi^i(F^i \cup E)$ does not hold. $\Pcal$ is a non-empty $\Pi^{0,X}_1$ class, so since $X \in \Mcal \models \wkl$, there is some 2-cover $Z^0 \cup Z^1 \in \Pcal \cap \Mcal$. Let $i < 2$ be such that $Z^i$ is infinite. Then the condition $d = (F^0, F^1, Z^i)$ is an extension of $c$ such that $d \Vdash^i \neg \varphi^i(G)$.
\end{proof}

By a pairing argument (if for every pair $m, n \in \omega$, $m \in A$ or $n \in B$, then $A = \omega$ or $B = \omega$), if a filter $\Fcal$ is sufficiently generic, there is some side $i$
such that for every $\Sigma^0_1$ formula $\varphi(G)$, there is some $c \in \Fcal$ such that 
$c \Vdash^i \varphi(G)$ or $c \Vdash^i \neg \varphi(G)$. We therefore get the following lemma.

\begin{lemma}
For every sufficiently generic filter $\Fcal$ and every set $G^i \in [\Fcal]^i$, there is a side $i < 2$ such that for every $\Sigma^0_1$ formula $\varphi(G)$, $\varphi(G^i)$ holds if and only if $c \Vdash^i \varphi(G)$ for some condition $c \in \Fcal$.
\end{lemma}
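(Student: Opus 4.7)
The plan is to combine the disjunctive property of $\qvdash$ established in the previous lemma with a standard pairing argument. Enumerate the $\Sigma^0_1$ formulas in one distinguished set variable as $\varphi_0, \varphi_1, \dots$. For each pair $(m,n) \in \omega^2$, define
\[
D_{m,n} = \{\, c \in \Pb : c \Vdash^0 \varphi_m \text{ or } c \Vdash^0 \neg \varphi_m \text{ or } c \Vdash^1 \varphi_n \text{ or } c \Vdash^1 \neg \varphi_n \,\}.
\]
Applying the previous lemma to the disjunction $\varphi_m(G^0) \vee \varphi_n(G^1)$ at any condition $c$, we obtain an extension $d \leq c$ deciding one of $\varphi_m$ on side $0$ or $\varphi_n$ on side $1$, positively or negatively; hence $D_{m,n}$ is dense in $(\Pb, \leq)$.

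Now assume $\Fcal$ meets each $D_{m,n}$. Set
\[
A = \{\, m : (\exists c \in \Fcal)\, c \Vdash^0 \varphi_m \text{ or } c \Vdash^0 \neg \varphi_m \,\},
\]
and define $B \subseteq \omega$ analogously for side~$1$. Density yields $m \in A$ or $n \in B$ for every $(m,n) \in \omega^2$, so by the pairing argument (if some $m_0 \notin A$ then every $n$ lies in $B$), either $A = \omega$ or $B = \omega$. Fix $i < 2$ so that the corresponding set equals $\omega$.

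For this fixed $i$, I would verify the biconditional. Given a $\Sigma^0_1$ formula $\varphi(G) = (\exists x)\psi(G,x)$, pick $c = (F^0, F^1, X) \in \Fcal$ deciding $\varphi$ on side $i$. If $c \Vdash^i \varphi$, then $\psi(F^i, w)$ holds for some $w$; since $F^i$ and $G^i$ coincide below $\min X$ and $\psi$ is $\Delta^0_0$, upward absoluteness of $\Sigma^0_1$ truth gives $\varphi(G^i)$. If instead $c \Vdash^i \neg \varphi$, then $\neg \psi(F^i \cup E, w)$ holds for every $w$ and every finite $E \subseteq X$; since every finite subset of $G^i \setminus F^i$ is such an $E$, we get $\neg \varphi(G^i)$. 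These cases are mutually exclusive, so $\varphi(G^i)$ holds if and only if some $c \in \Fcal$ satisfies $c \Vdash^i \varphi$.

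The main delicate point is the semantic transfer in the negative case. It works because the definition of $\Vdash^i (\forall x)\neg\psi$ quantifies over all finite $E \subseteq X$ rather than only those contained in $A^i$: this extra strength is precisely what makes the $\Pi^0_1$ commitment persist when passing to the generic set $G^i$, and correspondingly it is what forced the use of the disjunctive forcing question to obtain density on the positive side.
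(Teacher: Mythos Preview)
Your argument is correct and follows exactly the approach the paper sketches just before the lemma: use the disjunctive forcing question from the Cholak--Jockusch--Slaman lemma to make the sets $D_{m,n}$ dense, invoke the pairing argument to obtain a side $i$ on which every $\Sigma^0_1$ formula is decided, and then read off the biconditional from the definitions of $\Vdash^i$. The paper itself gives no further detail than the parenthetical pairing remark, so your write-up is a faithful expansion of its intended proof; the only point worth tightening is the ``upward absoluteness'' step in the positive case, which tacitly uses the standard continuity convention for $\Sigma^0_1$ formulas that the paper is assuming throughout.
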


In this paper, we generalize the combinatorics of Cholak, Jockusch and Slaman~\cite{Cholak2001strength} to design a notion of forcing with a forcing question having the right definitional complexity for upper formulas. This generalization involves the development of some new forcing machineries.

\subsection{Largeness classes}

The combinatorics of Cholak, Jockusch and Slaman enable one to decide a $\Sigma^0_1$ formula relative to the generic set $G$ independently of the set $A$, by asking whether the $\Sigma^0_1$ formula holds over “sufficiently many" finite sets. We make this largeness criterion precise through the notion of largeness class.

\begin{definition}
A \emph{largeness class} is a collection of sets $\Acal \subseteq 2^\omega$ such that
\begin{itemize}
	\item[(a)] If $X \in \Acal$ and $Y \supseteq X$, then $Y \in \Acal$
	\item[(b)] For every $k$-cover $Y_0, \dots, Y_{k-1}$ of $\omega$, there is some $j < k$ such that $Y_j \in \Acal$.  
\end{itemize} 
\end{definition}

For example, the collection of all the infinite sets is a largeness class. Moreover, any superclass of a largeness class is again a largeness class. We shall exclusively consider largeness classes which are countable intersections of $\Sigma^0_1$ classes, and which contain only infinite sets. 
Fix an effective enumeration $\Ucal_0, \Ucal_1, \dots$ of all the $\Sigma^0_1$ classes
 upward-closed under the superset relation, that is, if $X \in \Ucal_e$ and $Y \supseteq X$, then $Y \in \Ucal_e$.
These largeness classes can be represented by sets of integers $C$, denoting the class $\bigcap_{e \in C} \Ucal_e$, where $\{\Ucal_e\}_{e \in \omega}$ is a standard enumeration of the $\Sigma^0_1$ classes. Let us illustrate how one uses largeness classes to force $\Pi^0_2$ facts.

\begin{definition} \label{def:the_not_riemann_zeta_function}
Let $\zeta : \omega \times 2^{<\omega} \times \omega \to \omega$ be the computable function that takes as a parameter a code for a $\Delta_0$ formula $\Phi_e(G, n, m)$, a string $\sigma$ and an integer $n$,
and which gives a code for the open set
$$
\{ X : (\exists \rho \subseteq X - \{ 0, \dots, |\sigma|\})(\exists m) \neg \Phi_e(\sigma \cup \rho, n, m) \}
$$
\end{definition}

Fix a $\Delta_0$ formula $\Phi_a(G, n, m)$.
Suppose that $C$ is a set of integers such that $\bigcap_{e \in C} \Ucal_e$ is a largeness class that contains only infinite sets, and such that for every finite sequence $\sigma$ and every $n$, $\zeta(a, \sigma, n) \in C$. 

Fix a set $A$, and let $A^0 = \overline{A}$ and $A^1 = A$. Since $\bigcap_{e \in C} \Ucal_e$ is a largeness class, there is some $i < 2$ such that $A^i \in \bigcap_{e \in C} \Ucal_e$.
We can then build an infinite subset $H$ of $A^i$ such that $(\forall n)(\exists m)\neg \Phi_a(H, n, m)$ holds by the finite extension method $\sigma_0 \subseteq \sigma_1 \subseteq \dots \subseteq A^i$, letting $H = \bigcup_s \sigma_s$. First, note that $A^i$ must be infinite since $\bigcap_{e \in C} \Ucal_e$ contains only infinite sets. Therefore, given an initial segment $\sigma_s \subseteq A^i$, one can find an extension $\sigma_{s+1} \succeq \sigma_s$ such that $\#\sigma_{s+1} > \#\sigma_s$. Then, given some $n \in \omega$ and an initial segment $\sigma_s \subseteq A^i$, since $\zeta(a, \sigma_s, n) \in C$, $A^i \in \Ucal_{\zeta(a, \sigma_s, n)}$, so there is some $\rho \subseteq A^i - \{ 0, \dots, |\sigma_s|\}$ and some $m \in \omega$ such that $\neg \Phi_a(\sigma_s \cup \rho, n, m)$ holds.
	Letting $\sigma_{s+1} = \sigma_s\rho$, we made some progress to satisfy the $\Pi^0_2$ fact $(\forall n)(\exists m)\neg \Phi_a(H, n, m)$.

Before moving to the design of the notions of forcing, we prove two technical lemmas about largeness classes.

\begin{lemma}\label{lem:decreasing-largeness-yields-largeness}
Suppose $\Acal_0 \supseteq \Acal_1 \supseteq \dots$ is a decreasing sequence of largeness classes.
Then $\bigcap_s \Acal_s$ is a largeness class.	
\end{lemma}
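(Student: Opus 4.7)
The plan is to verify both defining conditions of a largeness class for $\Acal := \bigcap_s \Acal_s$ directly, with the cover condition being the only step requiring any real argument.

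First I would handle upward closure, which is immediate: if $X \in \Acal$ and $Y \supseteq X$, then $X \in \Acal_s$ for every $s$, so upward closure of each $\Acal_s$ gives $Y \in \Acal_s$ for every $s$, hence $Y \in \Acal$.

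The substantive step is the cover condition. Given a $k$-cover $Y_0, \dots, Y_{k-1}$ of $\omega$, I need to exhibit a single index $j < k$ with $Y_j \in \Acal_s$ for \emph{all} $s$. For each $s$, the fact that $\Acal_s$ is a largeness class supplies some index $j_s < k$ with $Y_{j_s} \in \Acal_s$. Since $k$ is finite, by the (finite) pigeonhole principle there is some fixed $j < k$ with $j_s = j$ for infinitely many $s$. The key observation is that the sequence $\Acal_0 \supseteq \Acal_1 \supseteq \cdots$ is decreasing, so $Y_j \in \Acal_s$ for infinitely many $s$ implies $Y_j \in \Acal_s$ for every $s$: indeed, given any $s$, pick some $s' \geq s$ with $Y_j \in \Acal_{s'}$, and then $Y_j \in \Acal_{s'} \subseteq \Acal_s$. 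Hence $Y_j \in \bigcap_s \Acal_s = \Acal$, as required.

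There is no real obstacle here; the only thing to notice is that the decreasing hypothesis upgrades \emph{infinitely often} to \emph{always}, which is exactly what lets the finite pigeonhole step go through uniformly in $s$. Note that the argument uses only the finiteness of $k$ and the monotonicity of the sequence, so no further structural assumptions on the $\Acal_s$ (such as being countable intersections of $\Sigma^0_1$ classes) are needed.
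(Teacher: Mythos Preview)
Your proof is correct and is essentially identical to the paper's own argument: both verify upward closure directly, then use pigeonhole to find a part $Y_j$ lying in $\Acal_s$ for infinitely many $s$, and invoke the decreasing hypothesis to upgrade ``infinitely often'' to ``for all $s$''. The only difference is cosmetic: you spell out the upgrade step explicitly, while the paper leaves it as a single clause.
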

\begin{proof}
If $X \in \bigcap_s \Acal_s$ and $Y \supseteq X$, then for every $s$, since $\Acal_s$ is a largeness class, $Y \in \Acal_s$, so $Y \in \bigcap_s \Acal_s$.
Let $Y_0, \dots, Y_{k-1}$ be a $k$-cover of $\omega$. For every $s \in \omega$, there is some $j < k$
such that $Y_j \in \Acal_s$. By the infinite pigeonhole principle, there is some $j < k$ such that $Y_j \in \Acal_s$ for infinitely many $s$. Since $\Acal_0 \supseteq \Acal_1 \supseteq \dots$ is a decreasing sequence,  $Y_j \in \bigcap_s \Acal_s$.
\end{proof}

\begin{lemma}\label{lem:largeness-class-complexity}
Let $\Acal$ be a $\Sigma^0_1$ class. 
The sentence “$\Acal$ is a largeness class" is $\Pi^0_2$.
\end{lemma}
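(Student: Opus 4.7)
The plan is to analyze the two clauses of the definition of a largeness class separately and show that each is $\Pi^0_2$. Write $\Acal = \bigcup_{\sigma \in W} \cyl{\sigma}$ for some c.e.\ set $W \subseteq 2^{<\omega}$. The essential preliminary observation, which is a direct application of K\"onig's lemma in Cantor space, is that for every string $\tau$, the relation $\cyl{\tau} \subseteq \Acal$ is uniformly $\Sigma^0_1$ in $\tau$: the covering holds iff there exist $\ell$ and a stage $s$ such that every string $\tau' \succeq \tau$ of length $\ell$ has some prefix in $W_s$. The (one-sided) compactness argument is standard: any open cover of the compact class $\cyl{\tau}$ has a finite subcover.

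For clause~(a), the plan is to reformulate upward closure under superset as the arithmetical statement: for every $\sigma \in W$ and every string $\tau$ with $|\tau| = |\sigma|$ and $F_\tau \supseteq F_\sigma$, $\cyl{\tau} \subseteq \Acal$. One direction is immediate: given $X \in \Acal$ witnessed by $\sigma \prec X$ with $\sigma \in W$, and given $Y \supseteq X$, the string $\tau := Y \uh |\sigma|$ meets the hypothesis, so $Y \in \cyl{\tau} \subseteq \Acal$. Conversely, given such $\sigma, \tau$, any $Z \succ \tau$ is (as a set) a superset of $F_\sigma \cup (Z \setminus \{0, \ldots, |\sigma|-1\})$, which extends $\sigma$ and hence lies in $\Acal$, so upward closure forces $Z \in \Acal$. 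The reformulated clause is $\forall \sigma, \tau$ of an implication whose hypothesis ($\sigma \in W$) is $\Sigma^0_1$ and whose conclusion ($\cyl{\tau} \subseteq \Acal$) is $\Sigma^0_1$; this is $\Pi^0_1 \vee \Sigma^0_1$, which collapses to $\Pi^0_2$ via the standard identity $(\forall s\, P(s)) \vee (\exists t\, Q(t)) \equiv \forall s\, \exists t\, (P(s) \vee Q(t))$, and the outer universal quantification preserves this complexity.

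For clause~(b), the plan is to argue by K\"onig's lemma that for each fixed $k \geq 1$, the clause is equivalent to the finiteness of the finitely-branching tree $T_k$ of equal-length tuples $(\sigma_0, \ldots, \sigma_{k-1}) \in (2^{<\omega})^k$ such that $\bigcup_j F_{\sigma_j} \supseteq \{0, \ldots, |\sigma_0| - 1\}$ and no $\sigma_j$ has a prefix in $W$. An infinite branch through $T_k$ directly produces a $k$-cover of $\omega$ with no part in $\Acal$, and conversely any such cover yields, via initial segments, arbitrarily deep nodes of $T_k$. Since $T_k$ is finitely branching, it is infinite iff it admits an infinite path. Finiteness of $T_k$ is $\Sigma^0_1$: it asserts the existence of $\ell$ and $s$ such that every tuple of equal-length strings of length $\ell$ covering $\{0, \ldots, \ell - 1\}$ has some component with a prefix in $W_s$. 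Quantifying universally over $k$ yields $\Pi^0_2$.

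Combining, the sentence ``$\Acal$ is a largeness class'' is a conjunction of two $\Pi^0_2$ conditions, hence $\Pi^0_2$. The only subtlety is the compactness reformulation in the preliminary observation, which underlies both analyses; the rest is bookkeeping on quantifier alternations.
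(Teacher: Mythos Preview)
Your proof is correct and follows essentially the same route as the paper: use compactness (K\"onig's lemma) to rewrite each clause of the definition as an arithmetical statement over finite strings, then count quantifier alternations. Your handling of clause~(a) via the condition $\cyl{\tau} \subseteq \Acal$ is a bit more careful than the paper's terse formulation, but the underlying idea is identical.
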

\begin{proof}
Say $\Acal = \{ X : (\exists \sigma \preceq X)\varphi(\sigma) \}$ where $\varphi$ is a $\Sigma^0_1$ formula.
By compactness, $\Acal$ is a largeness class iff for every $\sigma$ and $\tau$ such that $\sigma \subseteq \tau$ and $\varphi(\sigma)$ holds, $\varphi(\tau)$ holds, and for every $k$, there is some $n \in \omega$ such that for every $\sigma_0 \cup \dots \cup \sigma_{k-1} = \{0, \dots, n\}$, there is some $j < k$ such that $\varphi(\sigma_j)$ holds.
\end{proof}

\subsection{From Mathias forcing to a second jump control}

The notion of forcing used to control the first jump of solutions to the infinite pigeonhole principle is a variant of Mathias forcing, a purely combinatorial notion with no effectiveness restriction on the reservoirs. This notion is essential in the study of Ramsey's theory. We now review the basic definitions of Mathias forcing, and then describe how to enrich this notion of forcing to have a better second jump control.

\begin{definition}
Let $\Qb_0$ be the set of ordered pairs $(\sigma, X)$ such that $X$ is infinite and $X \cap \{ 0, \dots, |\sigma|\} = \emptyset$. 
\end{definition}

Mathias forcing builds a single object $G$ by approximations (conditions) which consist in an initial segment $\sigma$ of $G$, and an infinite reservoir of integers. The purpose of the reservoir is to restrict the set of elements we are allowed to add to the initial segment. The reservoir therefore enriches the standard Cohen forcing by adding an infinitary negative restrain. The denotation of a condition is therefore naturally defined as follows.

Given a condition $p = (\sigma, X) \in \Qb_0$, let 
$$
[\sigma, X] = \{ Y \in [\omega]^\omega : \sigma \preceq Y \wedge Y - \{0, \dots, |\sigma| \} \subseteq X \}
$$

\begin{definition}
The partial order on $\Qb_0$ is defined by $(\tau, Y) \leq (\sigma, X)$ 
if $\sigma \preceq \tau$, $Y \subseteq X$ and $\tau - \sigma \subseteq X$.
\end{definition}

The following lemma is standard, and expresses that whenever the approximation becomes more precise, then the set of ``candidates" decreases.

\begin{lemma}\label{lem:qb0-denotation-compatible-order}
Suppose $(\tau, Y) \leq (\sigma, X) \in \Qb_0$. Then $[\tau, Y] \subseteq [\sigma, X]$.
\end{lemma}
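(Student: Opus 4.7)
The plan is to fix an arbitrary $Z \in [\tau, Y]$ and verify directly that $Z$ satisfies the two defining conditions of $[\sigma, X]$, namely $\sigma \preceq Z$ and $Z \setminus \{0, \dots, |\sigma|\} \subseteq X$. The first is immediate from transitivity of the prefix relation: by hypothesis $\sigma \preceq \tau$, and by the assumption $Z \in [\tau, Y]$ we have $\tau \preceq Z$, so $\sigma \preceq Z$.

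For the second condition, pick any $z \in Z$ with $z \geq |\sigma|$ and split into two cases according to whether $z < |\tau|$ or $z \geq |\tau|$. If $z \geq |\tau|$, then $z \in Z \setminus \{0, \dots, |\tau|\}$, which by the definition of $[\tau, Y]$ is contained in $Y$, and by the extension relation $Y \subseteq X$, so $z \in X$. If instead $|\sigma| \leq z < |\tau|$, then recalling the paper's identification of a string with the finite set of positions where it takes the value~$1$, the fact that $\tau \preceq Z$ forces $z \in F_\tau$, while $z \geq |\sigma|$ forces $z \notin F_\sigma$; hence $z$ lies in the set-theoretic difference $\tau - \sigma$, which is included in $X$ by the third clause in the definition of $(\tau, Y) \leq (\sigma, X)$.

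The only subtle point, and the place where one must be careful, is the dual role of $\sigma$ and $\tau$ as binary strings (used for the prefix relation $\sigma \preceq \tau$ and $\tau \preceq Z$) and as finite subsets of $\omega$ (used when writing $\tau - \sigma \subseteq X$ or $Z \setminus \{0, \dots, |\sigma|\}$). Once this identification is made explicit, both cases are handled by unfolding definitions, so the lemma follows. No genericity, compactness, or largeness machinery is invoked; this is purely a bookkeeping verification of the compatibility between the partial order on $\Qb_0$ and its interpretation $[\,\cdot\,]$.
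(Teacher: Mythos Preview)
Your proof is correct and is exactly the natural direct verification; the paper itself omits any proof of this lemma, merely calling it ``standard''. One small boundary issue: the paper's convention is that $\{0,\dots,|\sigma|\}$ includes the endpoint $|\sigma|$, so the relevant elements are those with $z > |\sigma|$ rather than $z \geq |\sigma|$, and likewise your case split should read $z > |\tau|$ versus $|\sigma| < z \leq |\tau|$ (with the sub-case $z = |\tau|$ handled by noting that $|\tau| \notin Z$ since $(\tau,Y)\in\Qb_0$ forces $Y \cap \{0,\dots,|\tau|\}=\emptyset$ and the only element of $Z$ at position $|\tau|$ would have to come from $Y$ --- or more simply, one can replace the paper's $\{0,\dots,|\sigma|\}$ by $\{0,\dots,|\sigma|-1\}$ throughout without harm). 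This is cosmetic; the substance of your argument is right.
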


The forcing relation for $\Sigma^0_1$ and $\Pi^0_1$ formulas can be defined in a natural way, and  has the right definitional complexity (relative to the reservoir), that is, forcing a $\Sigma^0_1$ and a $\Pi^0_1$ fact is $\Sigma^0_1$ and $\Pi^{0}_1$ relative to the reservoir, respectively. The relation can be extended to arbitrary arithmetical formulas by an inductive definition, but then forcing a $\Pi^0_n$ formula becomes $\Pi^0_{n+1}$ relative to the reservoir. This makes the forcing question for higher formula fail to have a good definitional complexity, even when the reservoir is required to be computable. We refer the reader to Cholak, Dzhafarov, Hirst and Slaman~\cite{Cholak2014Generics} for the study a computable Mathias forcing.

\begin{definition}
Let $\Phi_e(G, n)$ be a $\Delta_0$ formula with free variable $m$.
Let $p = (\sigma, X) \in \Qb_0$. 
\begin{itemize}
	\item[(a)] $p \Vdash (\exists n)\Phi_e(G, n)$ if $(\exists n)\Phi_e(\sigma, n)$ 
	\item[(b)] $p \Vdash (\forall n)\Phi_e(G, n)$ if $(\forall \tau \subseteq X)(\forall n)\Phi_e(\sigma \cup \tau, n)$
\end{itemize}
\end{definition}

\begin{lemma}\label{lem:qb0-forcing-holds}
Let $\Phi_e(G, n)$ be a $\Delta_0$ formula with free variable $n$, and let $p \in \Qb_0$.
\begin{itemize}
	\item[(a)] If $p \Vdash (\exists n)\Phi_e(G, n)$, then $(\exists n)\Phi_e(Y, n)$ holds for every $Y \in [p]$
	\item[(b)] If $p \Vdash (\forall n)\neg \Phi_e(G, n)$, then $(\forall n)\neg \Phi_e(Y, n)$ holds for every $Y \in [p]$.
\end{itemize}	
\end{lemma}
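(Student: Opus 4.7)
The plan is to unwind the definitions directly, exploiting the fact that a $\Delta_0$ formula $\Phi_e(G, n)$ has its truth value determined by $G \cap [0, M]$ for a finite bound $M$ depending only on $n$ and on the code $e$.

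For part (a), assume $p = (\sigma, X) \Vdash (\exists n) \Phi_e(G, n)$, so by the forcing definition there exists $n$ with $\Phi_e(\sigma, n)$. Given $Y \in [\sigma, X]$, we have $\sigma \preceq Y$, so $Y$ agrees with $F_\sigma$ on $\{0, \dots, |\sigma|-1\}$. The evaluation of $\Phi_e(\sigma, n)$ inspects its oracle only on a finite initial segment which is contained in $\{0, \dots, |\sigma|-1\}$, so evaluating the same formula on $Y$ produces the same answer. Therefore $\Phi_e(Y, n)$ holds and witnesses $(\exists n) \Phi_e(Y, n)$.

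For part (b), assume $p \Vdash (\forall n) \neg \Phi_e(G, n)$, which by definition says that $\neg \Phi_e(\sigma \cup \tau, n)$ holds for every finite $\tau \subseteq X$ and every $n$. Fix $Y \in [\sigma, X]$ and $n \in \omega$, and let $M$ bound the segment of the oracle relevant to $\Phi_e(\cdot, n)$. Set $\tau := Y \cap [|\sigma|, M]$; by the definition of $[\sigma, X]$ we have $\tau \subseteq Y - \{0, \dots, |\sigma|\} \subseteq X$, so $\tau$ is a finite subset of $X$. Then $\sigma \cup \tau$ agrees with $Y$ on $[0, M]$, so $\Phi_e(Y, n)$ holds if and only if $\Phi_e(\sigma \cup \tau, n)$ does, and the latter is false by the forcing hypothesis, giving $\neg \Phi_e(Y, n)$.

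Both clauses amount to matching a finite portion of $Y$ with a set of the form $\sigma \cup \tau$ for an appropriate finite $\tau \subseteq X$, so I expect no real obstacle; the lemma is essentially a soundness check verifying that the syntactically defined forcing relations for $\Sigma^0_1$ and $\Pi^0_1$ formulas commit the generic set to the corresponding semantic properties, the only mildly subtle point being the observation in part (b) that the finite tail of $Y$ that matters lies in $X$, which is exactly the role of the reservoir condition in the partial order on $\Qb_0$.
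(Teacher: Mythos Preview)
Your proof is correct and proceeds exactly as one would expect; the paper itself states this lemma without proof, treating it as a routine verification of the standard Mathias forcing relation. One small remark: in part~(a) your assertion that the oracle use is contained in $\{0,\dots,|\sigma|-1\}$ is really the standard convention that $\Phi_e(\sigma,n)$ means the formula is witnessed with use bounded by $|\sigma|$, rather than a consequence of $\Phi_e$ being $\Delta_0$ --- but this is precisely the convention the paper is working under, so no change is needed.
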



The forcing relation for $\Sigma^0_2$ formulas $(\exists n)(\forall m)\Phi_e(G, n, m)$ can be defined with Mathias forcing as $p \Vdash (\exists n)(\forall m)\Phi_e(G, n, m)$ iff $(\exists n)p \Vdash (\forall m)\Phi_e(G, n, m)$, and has the right definitional properties. The issue comes when considering $\Pi^0_2$ formulas $(\forall n)(\exists m)\neg \Phi_e(G, n, m)$. Forcing a $\Pi^0_2$ fact can be seen as a promise to satisfy a countable collection of $\Sigma^0_1$ facts. Since forcing a $\Sigma^0_1$ fact usually requires to take an extension, we cannot force all the $\Sigma^0_1$ facts simultaneously. A $\Pi^0_2$ fact is then forced if, whatever the further stage of the construction, it will always be possible to make some progress by forcing one more $\Sigma^0_1$ fact. 

In the case of Mathias forcing, the notion of reservoir is too permissive, and it is not possible to talk about the extensions of a condition with a definitionally simple formula. We will therefore enrich the notion of Mathias condition to add some restrictions on the reservoir, so that the extensions can be described in a simpler way.

\begin{definition}
Let $\Qb_1$ be the set of tuples $(\sigma, X, C, U)$ such that 
\begin{itemize}
	\item[(a)] $X \cap \{ 0, \dots, |\sigma|\} = \emptyset$ ; $X \supseteq U$ with $X,U \subseteq \omega$
	\item[(b)] $\bigcap_{e \in C} \Ucal_e$ is a largeness class containing only infinite sets
	\item[(c)] $U \in \bigcap_{e \in C} \Ucal_e$.
\end{itemize}
\end{definition}

One can think of a condition $(\sigma, X, C, U)$ as a Mathias condition $(\sigma, U)$ with a set $C$ denoting a largeness class $\bigcap_{e \in C} \Ucal_e$ which will impose some constraints on the nature of the reservoirs. This view is reflected through the denotation of a condition. Given a condition $p = (\sigma, X, C, U) \in \Qb_1$, let 
$$
[\sigma, X, C, U] = [\sigma, U]
$$

From a purely combinatorial viewpoint, the reservoir $X$ of a condition $(\sigma, X, C, U)$ could have been dropped, yielding a notion of forcing with 3-tuples $(\sigma, C, U)$. The reservoir $X$ is kept for effectiveness restrictions reasons, which will become clear in section~\ref{sect:forcing-rt12}. Indeed, the reservoir $X$ is responsible for forcing $\Pi^0_1$ facts, while the reservoir $U$ will force (together with $C$) $\Pi^0_2$ facts. When considering effective forcing, we shall see that $U$ will be “one jump up" of $X$. For example, $X$ can be taken to be of low degree, while $U$ will be low over $\emptyset'$. Since $X$ will solely be responsible for forcing $\Pi^0_1$ fact, we shall relate a condition $(\sigma, X, C, U)$ with the Mathias condition $(\sigma, X)$.

\begin{lemma}\label{lem:qb1-to-qb0-compatibility}
Suppose $(\sigma, X, C, U) \in \Qb_1$. Then
\begin{itemize}
	\item[(a)] $(\sigma, X) \in \Qb_0$
	\item[(b)] $[\sigma, X, C, U] \subseteq [\sigma, X]$
\end{itemize}
\end{lemma}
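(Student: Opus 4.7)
The plan is to unwind both parts directly from the definitions, since the lemma is essentially a consistency check confirming that a $\Qb_1$-condition projects to a legitimate $\Qb_0$-condition with a tighter denotation.

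For part (a), I need to verify the two defining clauses of $\Qb_0$ for the pair $(\sigma, X)$. The condition $X \cap \{0, \dots, |\sigma|\} = \emptyset$ is built into clause (a) of the definition of $\Qb_1$, so it is immediate. For infiniteness of $X$, I would invoke the chain $U \subseteq X$ (clause (a) of $\Qb_1$) together with the fact that $U \in \bigcap_{e \in C} \Ucal_e$ (clause (c)) and that this intersection contains only infinite sets (clause (b)). Hence $U$ is infinite, so its superset $X$ is infinite as well, giving $(\sigma, X) \in \Qb_0$.

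For part (b), the plan is just to chase definitions. By definition of the $\Qb_1$-denotation, $[\sigma, X, C, U] = [\sigma, U]$. Now for any $Y \in [\sigma, U]$, one has $\sigma \preceq Y$ and $Y - \{0, \dots, |\sigma|\} \subseteq U$. Since $U \subseteq X$ by clause (a) of $\Qb_1$, it follows that $Y - \{0, \dots, |\sigma|\} \subseteq X$, so $Y \in [\sigma, X]$. This yields the required inclusion $[\sigma, X, C, U] \subseteq [\sigma, X]$.

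There is no real obstacle here; both parts are pure bookkeeping verifications, and the proof should fit in just a few lines. The content of the lemma is conceptual rather than technical: it records that the coarse Mathias reservoir $X$ is consistent with (and weaker than) the refined reservoir $U$ together with the largeness constraint encoded by $C$, so that any statement forced via the $\Qb_0$-presentation $(\sigma, X)$ will automatically be true for all generic sets produced by the $\Qb_1$-condition $(\sigma, X, C, U)$.
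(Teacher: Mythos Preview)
Your proposal is correct and follows essentially the same approach as the paper. The only cosmetic difference is that for part (b) the paper observes $(\sigma, U) \leq (\sigma, X)$ in $\Qb_0$ and invokes Lemma~\ref{lem:qb0-denotation-compatible-order}, whereas you unwind that inclusion by hand; the content is identical.
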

\begin{proof}
(a) Since $U \in \bigcap_e \Ucal_e$ and $\bigcap_e \Ucal_e$ contains only infinite sets, then $U$ is infinite. In particular, $X$ is infinite since $X \supseteq U$.
Moreover, $X \cap \{0, \dots, |\sigma|\} = \emptyset$. Therefore $(\sigma, X) \in \Qb_0$.
(b) Since $(\sigma, U) \leq (\sigma, X)$ as a Mathias condition,
by Lemma~\ref{lem:qb0-denotation-compatible-order}, $[\sigma, X, C, U] \subseteq [\sigma, X]$.
\end{proof}

In particular, if $(\sigma, X) \Vdash (\exists n)\Phi_e(G, n)$ (resp.\ $(\sigma, X) \Vdash (\forall n)\Phi_e(G, n)$), then $(\exists n)\Phi_e(Y, n)$ (resp.\ $(\forall n)\Phi_e(Y, n)$) holds for every $Y \in [\sigma, X, C, U]$.

\begin{definition}
The partial order on $\Qb_1$ is defined by $(\tau, Y, D, V) \leq (\sigma, X, C, U)$ 
if $\sigma \preceq \tau$, $Y \subseteq X$, $V \subseteq U$, $C \subseteq D$ and $\tau - \sigma \subseteq U$.
\end{definition}

From the definition of a forcing condition $(\sigma, X, C, U)$, and especially from the constraint that $U \in \bigcap_{e \in C} \Ucal_e$, it is not clear at all that there exists infinite decreasing sequences of conditions with non-trivial reservoirs, that is, with $U$ being coinfinite. In general, being a valid condition is not even closed under removing finitely elements from the reservoirs. Indeed, if $U \in \bigcap_{e \in C} \Ucal_e$ and $Y \subseteq X$ is cofinite in $X$, then it might be that $Y \cap U \not \in \bigcap_{e \in C} \Ucal_e$. Thankfully, since $\bigcap_{e \in C} \Ucal_e$ is a largeness class, we shall see in section~\ref{sect:forcing-rt12} that by carefully choosing our reservoirs, we will be able to apply some basic operations on them and keep having valid conditions.

\begin{lemma}\label{lem:qb1-order-compatibility-qb0}
Suppose $(\tau, Y, D, V) \leq (\sigma, X, C, U) \in \Qb_1$. Then
\begin{itemize}
	\item[(a)] $[\tau, Y, D, V] \subseteq [\sigma, X, C, U]$
	\item[(b)] $(\tau, Y) \leq (\sigma, X)$
\end{itemize}
\end{lemma}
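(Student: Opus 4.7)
The plan is a direct unpacking of definitions, since the new notion of forcing is designed precisely so that one can transfer order and denotation facts from $\Qb_0$ to $\Qb_1$. I would handle (b) first, because it feeds into (a).

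For (b), assume $(\tau, Y, D, V) \leq (\sigma, X, C, U)$. The $\Qb_1$-order gives $\sigma \preceq \tau$, $Y \subseteq X$ and $\tau - \sigma \subseteq U$. The only missing piece to get $(\tau, Y) \leq (\sigma, X)$ in $\Qb_0$ is $\tau - \sigma \subseteq X$, and this is immediate from clause (a) of the definition of $\Qb_1$ applied to $(\sigma, X, C, U)$, which tells us $U \subseteq X$. Hence $\tau - \sigma \subseteq U \subseteq X$, so the $\Qb_0$-order relation holds. (One should also note in passing that $(\tau, Y), (\sigma, X) \in \Qb_0$: $X$ and $Y$ are infinite since they contain the infinite sets $U$ and $V$ respectively, and the disjointness-with-initial-segment condition is built into clause (a) of the $\Qb_1$-definition.)

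For (a), I would first reduce the statement to a claim about $\Qb_0$ via the collapse of denotations: $[\tau, Y, D, V] = [\tau, V]$ and $[\sigma, X, C, U] = [\sigma, U]$ by definition, so it suffices to show $[\tau, V] \subseteq [\sigma, U]$. By Lemma~\ref{lem:qb0-denotation-compatible-order} it is enough to verify $(\tau, V) \leq (\sigma, U)$ in $\Qb_0$. The three needed facts $\sigma \preceq \tau$, $V \subseteq U$ and $\tau - \sigma \subseteq U$ are exactly three of the clauses in the hypothesis $(\tau, Y, D, V) \leq (\sigma, X, C, U)$. It remains to check that $(\tau, V)$ and $(\sigma, U)$ are genuine $\Qb_0$-conditions: $U$ and $V$ are infinite because they lie in the respective largeness classes $\bigcap_{e \in C} \Ucal_e$ and $\bigcap_{e \in D} \Ucal_e$, which by clause (b) of the $\Qb_1$-definition contain only infinite sets, and the disjointness with $\{0,\dots,|\sigma|\}$ and $\{0,\dots,|\tau|\}$ is again clause (a) of the $\Qb_1$-definition.

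There is no real obstacle here; the lemma is pure bookkeeping. The only subtlety worth flagging, and the reason the statement is useful later, is that clauses (a)--(c) of the $\Qb_1$-definition were precisely engineered so that both $(\sigma, U)$ (carrying the largeness-class information) and $(\sigma, X)$ (the outer Mathias condition used for $\Sigma^0_1$/$\Pi^0_1$ control) inherit everything they need from $\Qb_0$. This is what makes the transfer above automatic and will be invoked silently in later arguments to appeal to $\Qb_0$-lemmas while working in $\Qb_1$.
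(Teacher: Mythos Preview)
Your proposal is correct and follows essentially the same route as the paper: both arguments reduce (a) to the $\Qb_0$-inequality $(\tau,V)\leq(\sigma,U)$ and obtain (b) from $\tau-\sigma\subseteq U\subseteq X$ together with $Y\subseteq X$. The only cosmetic differences are that you treat (b) before (a) and spell out the membership checks $(\sigma,U),(\tau,V)\in\Qb_0$, which the paper leaves implicit.
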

\begin{proof}
(a)
Since $\sigma \preceq \tau$, $V \subseteq U$ and $\tau - \sigma \subseteq U$, then $(\tau, V) \leq (\sigma, U)$. Therefore $[\tau, Y, D, V] \subseteq [\sigma, X, C, U]$.
(b) Immediate since $(\tau, V) \leq (\sigma, U)$, $Y \subseteq X$ and and $\tau-\sigma \subseteq U \subseteq X$.
\end{proof}

We now define the forcing relation for $\Sigma^0_2$ formulas and $\Pi^0_2$ formulas.
In the case of $\Sigma^0_2$ formulas, this coincides with the forcing relation for $\Sigma^0_2$ formulas over Mathias forcing. The the case of $\Pi^0_2$ formulas is new, and is justified by our explanations about the combinatorics of largeness classes. Recall the function $\zeta$ of Definition~\ref{def:the_not_riemann_zeta_function}.

\begin{definition}\label{def:qb-forcing-relation}
Let $\Phi_e(G, n, m)$ be a $\Delta_0$ formula with free variables $m$ and $n$.
Let $p = (\sigma, X, C, U) \in \Qb_1$. 
\begin{itemize}
	\item[(a)] $p \Vdash (\exists n)(\forall m)\Phi_e(G, n, m)$ if $(\exists n)(\forall \tau \subseteq X)(\forall m)\Phi_e(\sigma \cup \tau, n, m)$ 
	\item[(b)] $p \Vdash (\forall n)(\exists m)\neg \Phi_e(G, n, m)$ if $(\forall \rho \subseteq U)(\forall n)\zeta(e, \sigma \cup \rho, n) \in C$
\end{itemize}
\end{definition}

\begin{lemma}\label{lem:qb1-forcing-closed-under-extension}
Let $\Phi_e(G, n, m)$ be a $\Delta_0$ formula with free variables $m$ and $n$.
Let $p, q \in \Qb_1$ be such that $q \leq p$. 
\begin{itemize}
	\item[(a)] If $p \Vdash (\exists n)(\forall m)\Phi_e(G, n, m)$ then $q \Vdash (\exists n)(\forall m)\Phi_e(G, n, m)$
	\item[(b)] If $p \Vdash (\forall n)(\exists m)\neg \Phi_e(G, n, m)$ then $q \Vdash (\forall n)(\exists m)\neg \Phi_e(G, n, m)$
\end{itemize}
\end{lemma}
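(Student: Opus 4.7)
The plan is to prove both parts by unfolding the definitions in Definition~\ref{def:qb-forcing-relation} and using the five containments bundled into the refinement relation $q = (\tau, Y, D, V) \leq p = (\sigma, X, C, U)$: namely $\sigma \preceq \tau$, $\tau \setminus \sigma \subseteq U$, $V \subseteq U$, $Y \subseteq X$, and $C \subseteq D$, together with the standing fact (built into being a $\Qb_1$-condition) that $U \subseteq X$. The main template of both arguments is that an arbitrary ``test set'' for the extension $q$ can be rewritten as a test set for the ambient condition $p$ with the initial segment reinstated; the asymmetry between (a) and (b) is simply which reservoir ($X$ versus $U$) plays the role of the test set, and in (b) one additionally needs $C \subseteq D$ to transport the $\zeta$-code between the two largeness parameters.

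For (a), assume $p \Vdash (\exists n)(\forall m)\Phi_e(G,n,m)$ and let $n$ be a witness, so that $(\forall m)\Phi_e(\sigma \cup \rho, n, m)$ holds for every $\rho \subseteq X$. I will check that the same $n$ witnesses forcing for $q$. Given any $\rho' \subseteq Y$, set $\rho = (\tau \setminus \sigma) \cup \rho'$. Then $\rho \subseteq X$ because $\tau \setminus \sigma \subseteq U \subseteq X$ and $\rho' \subseteq Y \subseteq X$; moreover, since $\sigma \preceq \tau$ we have $\sigma \cup \rho = \tau \cup \rho'$ at the level of the underlying sets. The hypothesis on $p$ then yields $(\forall m)\Phi_e(\tau \cup \rho', n, m)$, which is exactly the clause required for $q \Vdash (\exists n)(\forall m)\Phi_e(G,n,m)$.

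For (b), assume $p \Vdash (\forall n)(\exists m)\neg \Phi_e(G,n,m)$, i.e.\ $\zeta(e, \sigma \cup \rho, n) \in C$ for every $\rho \subseteq U$ and every $n$. Fix $\rho' \subseteq V$ and $n$. Setting $\rho = (\tau \setminus \sigma) \cup \rho'$, the inclusions $\tau \setminus \sigma \subseteq U$ and $\rho' \subseteq V \subseteq U$ give $\rho \subseteq U$, and again $\sigma \cup \rho = \tau \cup \rho'$. The assumption on $p$ yields $\zeta(e, \tau \cup \rho', n) = \zeta(e, \sigma \cup \rho, n) \in C$, and then $C \subseteq D$ gives $\zeta(e, \tau \cup \rho', n) \in D$. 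Since $\rho'$ and $n$ were arbitrary, $q \Vdash (\forall n)(\exists m)\neg \Phi_e(G,n,m)$.

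I do not anticipate a real obstacle: the ordering on $\Qb_1$ was defined precisely so that this bookkeeping goes through, and the only point deserving mild care is the string/set identification, where one uses $\sigma \preceq \tau$ to ensure $F_\tau \setminus F_\sigma \subseteq \{|\sigma|, \dots, |\tau|-1\}$ so that the union $(\tau \setminus \sigma) \cup \rho'$ is disjoint from $F_\sigma$ and literally equals $F_\tau \cup \rho'$ as a set. Once that is noted, both parts reduce to a one-line substitution.
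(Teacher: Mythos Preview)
Your proof is correct and follows essentially the same approach as the paper. For (a) the paper packages the computation into an appeal to Lemma~\ref{lem:qb1-order-compatibility-qb0} (that $(\tau,Y)\leq(\sigma,X)$ as Mathias conditions), whereas you unfold that step directly; for (b) your argument is line-for-line the same as the paper's, setting $\rho=(\tau\setminus\sigma)\cup\rho'$ and using $C\subseteq D$.
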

\begin{proof}
Say $p = (\sigma, X, C, U)$ and $q = (\tau, Y, D, V)$
\begin{itemize}
	\item[(a)] Since $p \Vdash (\exists n)(\forall m)\Phi_e(G, n, m)$, then there is some $n$ such that $(\sigma, X) \Vdash (\forall m)\Phi_e(G, n, m)$. By Lemma~\ref{lem:qb1-order-compatibility-qb0}, $(\tau, Y) \leq (\sigma, X)$, so $(\tau, Y) \Vdash (\forall m)\Phi_e(G, n, m)$, hence $q \Vdash (\exists n)(\forall m)\allowbreak\Phi_e(G, n, m)$.
	\item[(b)] Let $\rho = \tau - \sigma$. By definition of $q \leq p$, $\rho \subseteq U$. Let $\rho_1 \subseteq V$. In particular, $\rho \cup \rho_1 \subseteq U$. By definition of $p \Vdash (\forall n)(\exists m)\neg \Phi_e(G, n, m)$, for every $n$, $\zeta(e, \sigma \cup \rho \cup \rho_1, n) \in C \subseteq D$. So $\zeta(e, \tau \cup \rho_1, n) \in D$ for every $n$. 
\end{itemize}
\end{proof}

We now define the notion of genericity which will be sufficient
to prove the main property of the forcing relation, that is,
whenever a formula is forced, then it will hold over the generic set.

\begin{definition}
A $\Qb_1$-filter $\Fcal$ is \emph{2-generic} if for every $\Sigma^0_2$ formula $\varphi(G)$,
there is some $p \in \Fcal$ such that $p \Vdash \varphi(G)$ or $p \Vdash \neg \varphi(G)$.
\end{definition}

As explained above, it is not clear at all that 2-generic $\Qb_1$-filters exist.
Their existence will be proven in section~\ref{sect:forcing-rt12}. 

\begin{lemma}\label{lem:qb1-forcing-pi2-progress}
Let $\Fcal$ be a 2-generic $\Qb_1$-filter, and $\Phi_e(G, n, m)$ be a $\Delta_0$ formula with free variables $m$ and $n$.
If $p \Vdash (\forall n)(\exists m)\neg \Phi_e(G, n, m)$ for some $p \in \Fcal$,
then for every $n \in \omega$, there is some $q = (\tau, Y, D, V) \in \Fcal$
such that $(\tau, Y) \Vdash (\exists m)\neg \Phi_e(G, n, m)$.
\end{lemma}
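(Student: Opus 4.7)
The plan is to apply $2$-genericity of $\Fcal$ to the $\Sigma^0_2$ formula $\psi_n(G) := (\exists m)\neg\Phi_e(G,n,m)$, formally $(\exists m)(\forall k)\tilde\Phi(G,m,k)$ with $\tilde\Phi(G,m,k) := \neg\Phi_e(G,n,m)$ and vacuous inner quantifier. Some $q \in \Fcal$ then forces $\psi_n$ or forces $\neg\psi_n$ in the sense of Definition~\ref{def:qb-forcing-relation}.

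In the positive case $q \Vdash \psi_n$, unwinding Definition~\ref{def:qb-forcing-relation}(a) gives an $m$ with $\neg\Phi_e(\sigma_q \cup \tau, n, m)$ for every finite $\tau \subseteq X_q$. Setting $\tau = \emptyset$ yields $(\exists m)\neg\Phi_e(\sigma_q, n, m)$, which reads as $(\sigma_q, X_q) \Vdash (\exists m)\neg\Phi_e(G,n,m)$ in the $\Qb_0$-sense, so $q$ is the required condition.

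In the negative case $q \Vdash \neg\psi_n$, the goal is to derive a contradiction. I would take a common extension $r \leq p, q$ in $\Fcal$; by Lemma~\ref{lem:qb1-forcing-closed-under-extension}(b), $r$ forces both the hypothesis $(\forall n')(\exists m)\neg\Phi_e(G, n', m)$ and $\neg\psi_n$, the latter phrased as $(\forall m')(\exists k)\neg\Phi'(G,m',k)$ with $\Phi'(G,m',k) := \neg\Phi_e(G,n,m')$ of some index $e'$. Definition~\ref{def:qb-forcing-relation}(b) thus places both $\zeta(e,\sigma_r \cup \rho, n)$ and $\zeta(e',\sigma_r \cup \rho, m')$ into $C_r$ for every $\rho \subseteq U_r$ and every $m'$, while $\bigcap_{j \in C_r}\Ucal_j$ must remain a largeness class containing $U_r$. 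From the first family at $\rho = \emptyset$ we extract $\rho^* \subseteq U_r - \{0,\dots,|\sigma_r|\}$ and $m^*$ with $\neg\Phi_e(\sigma_r \cup \rho^*, n, m^*)$; combining this with the second family evaluated at $(\rho, m') = (\rho^*, m^*)$, the plan is to exhibit a finite cover of $\omega$ whose parts all escape $\bigcap_{j \in C_r}\Ucal_j$, contradicting its largeness and hence the validity of $r$.

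The main obstacle is precisely this final cover construction. The two forcings impose dual demands on the largeness class: the first guarantees $\Phi_e$-falsifying extensions from every basepoint, while the second demands, uniformly in $m$, $\Phi_e(\cdot, n, m)$-satisfying extensions. A natural candidate for the violating cover is the partition of $\omega$ by the value of $\Phi_e(\sigma_r \cup \rho^* \cup \{x\}, n, m^*)$ as $x$ varies, together with a side for the base segment $\{0,\dots,|\sigma_r \cup \rho^*|\}$; the work lies in converting these opposed open-set constraints into a concrete failure of the $k$-cover property.
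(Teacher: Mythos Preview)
Your strategy is exactly the paper's: apply $2$-genericity to the $\Sigma^0_2$ formula built from $\Phi_u(G,a,b) := \neg\Phi_e(G,n,a)$, handle the positive case as you do, and in the negative case pass to a common refinement in $\Fcal$. Your extraction of $\rho^*$ and $m^*$ with $\neg\Phi_e(\sigma_r\cup\rho^*,n,m^*)$ from $U_r \in \Ucal_{\zeta(e,\sigma_r,n)}$ is likewise what the paper does.

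The place where you get stuck is precisely where you diverge from the paper. You look for a finite cover of $\omega$ witnessing that $\bigcap_{j\in C_r}\Ucal_j$ fails to be a largeness class; the paper does nothing of the sort. It simply exploits one more time the single membership $U_r \in \bigcap_{j\in C_r}\Ucal_j$, now applied to the index $\zeta(e',\sigma_r\cup\rho^*,m^*)$ that you already recorded (``the second family at $(\rho^*,m^*)$''). Unfolding the definition of $\zeta$ for $e'$, where $\neg\Phi_{e'}(G,m',k)$ is $\Phi_e(G,n,m')$, the membership $U_r\in\Ucal_{\zeta(e',\sigma_r\cup\rho^*,m^*)}$ yields some finite $\mu \subseteq U_r - \{0,\dots,|\sigma_r\cup\rho^*|\}$ with $\Phi_e(\sigma_r\cup\rho^*\cup\mu,n,m^*)$. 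The paper then declares this to contradict $\neg\Phi_e(\sigma_r\cup\rho^*,n,m^*)$ and stops (this is the same persistence of $\Sigma^0_1$ facts under Mathias extension that underlies Lemma~\ref{lem:qb0-forcing-holds}(a)). So the missing ingredient is not a cover construction at all, but a second direct appeal to $U_r$ lying in the largeness class, this time for the $\zeta$-index coming from the $\neg\psi_n$ side; the two extracted witnesses $\rho^*$ and $\mu$ then clash immediately. Your proposed partition by the value of $\Phi_e(\sigma_r\cup\rho^*\cup\{x\},n,m^*)$ is unnecessary and is why the argument stalled.
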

\begin{proof}
Fix $n \in \omega$, and let $\Phi_u(G, a, b) = \neg \Phi_e(G, n, a)$.
Since $\Fcal$ is a 2-generic filter, there is some $q = (\tau, Y, D, V) \in \Fcal$ such that
$$
q \Vdash (\exists a)(\forall b)\Phi_u(G, a, b) \mbox{ or } 
q \Vdash (\forall a)(\exists b)\neg \Phi_u(G, a, b)
$$
Suppose first $q \Vdash (\exists a)(\forall b)\Phi_u(G, a, b)$. Then in particular $(\tau, Y) \Vdash (\exists a)\neg \Phi_e(G, n, a)$ and we are done.
Suppose now $q \Vdash (\forall a)(\exists b)\neg \Phi_u(G, a, b)$. Since $\Fcal$ is a filter, we can assume that $q \leq p$. In particular, by the lemma's hypothesis, $q \Vdash (\forall n)(\exists m)\neg \Phi_e(G, n, m)$, so $\zeta(e, \tau, n) \in D$. Since $V \in \bigcap_{e \in D} \Ucal_e$, then $V \in \Ucal_{\zeta(e, \tau, n)}$. Therefore, there is some $\rho \subseteq V - \{0, \dots, |\tau|\}$ and some $m \in \omega$ such that $\neg \Phi_e(\tau \cup \rho, n, m)$ holds. Since $q \Vdash (\forall a)(\exists b)\neg \Phi_u(G, a, b)$, $\zeta(u, \tau \cup \rho, m) \in D$, so $V \in \Ucal_{\zeta(u, \tau \cup \rho, m)}$. Therefore, there is some $\mu \subseteq V - \{0, \dots, |\tau \cup \rho|\}$ such that $\Phi_e(\tau \cup \rho \cup \mu, n, m)$ holds. This contradicts $\neg \Phi_e(\tau \cup \rho, n, m)$.
\end{proof}

\begin{lemma}\label{lem:qb1-forcing-infinity}
Let $\Fcal$ be a 2-generic $\Qb_1$-filter.
Then there is some $p \in \Fcal$ such that
$p \Vdash (\forall n)(\exists m)[m > n \wedge m \in G]$.
\end{lemma}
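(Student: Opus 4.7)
The plan is to invoke 2-genericity on the $\Sigma^0_2$ negation of the target formula and then rule out that negation by a direct infinitude argument. First I would rewrite $(\forall n)(\exists m)[m>n \wedge m\in G]$ in the canonical $\Pi^0_2$ shape $(\forall n)(\exists m)\neg \Phi_e(G, n, m)$, where $\Phi_e(G,n,m)$ is the $\Delta_0$ formula $m \leq n \vee m \notin G$. The negation $(\exists n)(\forall m)\Phi_e(G,n,m)$ is $\Sigma^0_2$, so by 2-genericity of $\Fcal$ there is some $p = (\sigma, X, C, U) \in \Fcal$ such that either $p \Vdash (\exists n)(\forall m)\Phi_e(G,n,m)$ or $p \Vdash (\forall n)(\exists m)\neg \Phi_e(G,n,m)$.

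The second step is to exclude the $\Sigma^0_2$ alternative. Unfolding Definition~\ref{def:qb-forcing-relation}(a), the statement $p \Vdash (\exists n)(\forall m)\Phi_e(G,n,m)$ unpacks to: there exists $n$ such that $\Phi_e(\sigma \cup \tau, n, m)$ holds for every finite $\tau \subseteq X$ and every $m$, i.e.\ $m \leq n$ or $m \notin \sigma \cup \tau$. But by Lemma~\ref{lem:qb1-to-qb0-compatibility}(a), $X$ is infinite, so I can pick some $m \in X$ with $m > n$ and set $\tau = \{m\}$. Then $\tau \subseteq X$, $m \in \sigma \cup \tau$, and $m > n$, falsifying $\Phi_e(\sigma \cup \tau, n, m)$ — a contradiction. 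Hence the other disjunct must hold, $p \Vdash (\forall n)(\exists m)\neg \Phi_e(G,n,m)$, which is precisely the statement of the lemma.

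The argument is essentially trivial once 2-genericity is granted, so the real obstacle is not here but upstream: one must actually produce 2-generic $\Qb_1$-filters, and in particular show that the reservoirs $U$ can be chosen to remain in the largeness class $\bigcap_{e\in C}\Ucal_e$ along an infinite descent. That combinatorial work is deferred to Section~\ref{sect:forcing-rt12}; the present lemma is just the standard verification that sufficient genericity prevents the generic set from being bounded.
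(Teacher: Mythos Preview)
Your proof is correct and essentially identical to the paper's own argument: the paper also sets $\Phi_e(G,n,m) \equiv [m \leq n \vee m \notin G]$, applies 2-genericity to get a condition deciding the formula, and rules out the $\Sigma^0_2$ side by noting that $X$ is infinite. Your explicit citation of Lemma~\ref{lem:qb1-to-qb0-compatibility}(a) for the infinitude of $X$ and your closing remark about the real work being deferred to Section~\ref{sect:forcing-rt12} are both accurate and slightly more detailed than the paper's terse version.
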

\begin{proof}
Let $\Phi_e(G, n, m) \equiv [m \leq n \vee m \not \in G]$.
Since $\Fcal$ is a 2-generic filter, there is some $p = (\sigma, X, C, U) \in \Fcal$ such that
$$
p \Vdash (\exists n)(\forall m)\Phi_e(G, n, m) \mbox{ or } 
p \Vdash (\forall n)(\exists m)\neg \Phi_e(G, n, m)
$$
Suppose for the sake of contradiction that the first case holds.
Then $(\exists n)(\forall \rho \subseteq X)(\forall m)[m \leq n \vee m \not \in \sigma \cup \rho]$.
This is impossible since $X$ is infinite. 
\end{proof}

Given a collection $\Fcal \subseteq \Qb_1$, we let $G_\Fcal = \bigcup \{ \sigma : (\sigma, X, C, U) \in \Fcal \}$. 

\begin{lemma}\label{lem:qb1-forcing-holds}
Let $\Fcal$ be a 2-generic $\Qb_1$-filter.
Let $\Phi_e(G, n, m)$ be a $\Delta_0$ formula with free variables $m$ and $n$,
and let $p \in \Fcal$.
\begin{itemize}
	\item[(a)] If $p \Vdash (\exists n)(\forall m)\Phi_e(G, n, m)$, then $(\exists n)(\forall m)\Phi_e(G_\Fcal, n, m)$ holds.
	\item[(b)] If $p \Vdash (\forall n)(\exists m)\neg \Phi_e(G, n, m)$, then $(\forall n)(\exists m)\neg \Phi_e(G_\Fcal, n, m)$ holds.
\end{itemize}
\end{lemma}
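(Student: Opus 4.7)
The plan is to reduce both parts to the already-established $\Qb_0$-forcing results via Lemma~\ref{lem:qb0-forcing-holds} and Lemma~\ref{lem:qb1-to-qb0-compatibility}, using Lemma~\ref{lem:qb1-forcing-pi2-progress} to handle the $\Pi^0_2$ case. A preliminary observation I would establish first is that for every $p = (\sigma, X, C, U) \in \Fcal$, the generic set $G_\Fcal$ lies in $[p] = [\sigma, U]$, and hence also in $[\sigma, X]$ by Lemma~\ref{lem:qb1-to-qb0-compatibility}(b). This is a standard filter argument: for any $q \in \Fcal$ one picks a common extension $r \in \Fcal$ of $p$ and $q$, and the order relation $r \leq p$ ensures $\sigma \preceq \sigma_r$ with $\sigma_r - \sigma \subseteq U$. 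Unioning over $\Fcal$ gives $\sigma \preceq G_\Fcal$ and $G_\Fcal - \{0,\dots,|\sigma|\} \subseteq U$, as required.

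For part (a), I would unfold Definition~\ref{def:qb-forcing-relation}(a): the hypothesis $p \Vdash (\exists n)(\forall m)\Phi_e(G, n, m)$ produces some fixed $n$ such that $(\sigma, X) \Vdash (\forall m)\Phi_e(G, n, m)$ holds as a $\Qb_0$-condition. Since $G_\Fcal \in [\sigma, X]$, applying Lemma~\ref{lem:qb0-forcing-holds}(b) to the $\Delta_0$ formula $\neg \Phi_e$ yields $(\forall m)\Phi_e(G_\Fcal, n, m)$, and hence $(\exists n)(\forall m)\Phi_e(G_\Fcal, n, m)$.

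For part (b), I would fix $n \in \omega$ and invoke Lemma~\ref{lem:qb1-forcing-pi2-progress}, whose proof exploits the largeness class $\bigcap_{e \in C} \Ucal_e$ and the function $\zeta$, to obtain some $q = (\tau, Y, D, V) \in \Fcal$ such that $(\tau, Y) \Vdash (\exists m)\neg \Phi_e(G, n, m)$ in $\Qb_0$. Since $G_\Fcal \in [\tau, Y]$ by the preliminary observation, Lemma~\ref{lem:qb0-forcing-holds}(a), applied to $\neg \Phi_e$, delivers $(\exists m)\neg \Phi_e(G_\Fcal, n, m)$; since $n$ was arbitrary, $(\forall n)(\exists m)\neg \Phi_e(G_\Fcal, n, m)$ holds.

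The only step with genuine content has already been discharged in Lemma~\ref{lem:qb1-forcing-pi2-progress}; the present lemma is essentially a reconciliation of the two-tier forcing language of $\Qb_1$ with the simpler Mathias language of $\Qb_0$, where $\Pi^0_1$-forcing was originally set up. The main thing to be careful about is not to confuse the roles of the two reservoirs $X$ and $U$: $\Sigma^0_2$/$\Pi^0_1$ facts are controlled through $X$ via $\Qb_0$-forcing, while $\Pi^0_2$ facts are controlled through $U$ and the largeness class $C$ via the progress lemma.
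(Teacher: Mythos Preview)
Your proposal is essentially the paper's proof, with one ordering subtlety worth noting. The paper proves (b) \emph{before} (a), and for a reason: membership $G_\Fcal \in [p] = [\sigma, U]$ by definition requires $G_\Fcal$ to be infinite, and the only way to know this is to apply part (b) to the condition produced by Lemma~\ref{lem:qb1-forcing-infinity}. Your ``standard filter argument'' correctly yields $\sigma \preceq G_\Fcal$ and $G_\Fcal - \{0,\dots,|\sigma|\} \subseteq U$, but does not by itself give infiniteness; so your preliminary observation, stated before (b) is established, is not yet fully justified. The fix is exactly what the paper does: prove (b) first (this step uses only $\tau \preceq G_\Fcal$, which the filter argument already gives, so Lemma~\ref{lem:qb0-forcing-holds}(a) applies), deduce that $G_\Fcal$ is infinite via Lemma~\ref{lem:qb1-forcing-infinity}, and then run your argument for (a). Apart from this reordering, your decomposition into Lemma~\ref{lem:qb1-forcing-pi2-progress} for the $\Pi^0_2$ case and Lemma~\ref{lem:qb0-forcing-holds} via Lemma~\ref{lem:qb1-to-qb0-compatibility}(b) for the $\Sigma^0_2$ case matches the paper exactly.
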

\begin{proof}
Say $p = (\sigma, X, C, U)$.
We first prove (b). Fix some $n$. By Lemma~\ref{lem:qb1-forcing-pi2-progress}, there is some $q = (\tau, Y, D, V) \in \Fcal$ such that $(\tau, Y) \Vdash (\exists m)\neg \Phi_e(G, n, m)$. By Lemma~\ref{lem:qb0-forcing-holds}(a) $(\exists m)\neg \Phi_e(G_\Fcal, n, m)$ holds.
We now prove (a). For this, we claim that $G_\Fcal \in [p]$. Indeed, by (b) of this lemma (which we already proved) and Lemma~\ref{lem:qb1-forcing-infinity}, $G_\Fcal$ is infinite. Moreover, $\sigma \preceq G_\Fcal$, and for every $\rho \subseteq G_\Fcal - \sigma$, there is some $q = (\tau, Y, D, V) \in \Fcal$ with $q \leq p$ such that $\rho \subseteq \tau$. In particular, $\rho \subseteq U$, so $G_\Fcal \in [p]$. By Lemma~\ref{lem:qb1-to-qb0-compatibility}(b), $G_\Fcal \in [\sigma, X]$. Since $p \Vdash (\exists n)(\forall m)\allowbreak\Phi_e(G, n, m)$, there is some $n \in \omega$ such that $(\sigma, X) \Vdash (\forall m)\Phi_e(G, n, m)$. By Lemma~\ref{lem:qb0-forcing-holds}(b), $(\forall m)\Phi_e(G_\Fcal, n, m)$ holds.   
\end{proof}

\begin{lemma}\label{lem:qb1-generic-yields-infinite}
Let $\Fcal$ be a 2-generic $\Qb_1$-filter. Then $G_\Fcal$ is infinite.
\end{lemma}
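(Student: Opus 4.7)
The plan is that this is essentially a direct corollary of the two preceding lemmas: Lemma~\ref{lem:qb1-forcing-infinity} provides a condition in $\Fcal$ forcing the $\Pi^0_2$ statement asserting infinitude of the generic set, and Lemma~\ref{lem:qb1-forcing-holds}(b) transfers a forced $\Pi^0_2$ statement to an actual truth about $G_\Fcal$. So the whole argument is an orchestration of machinery already set up.

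Concretely, I would let $\Phi_e(G,n,m)$ be the $\Delta_0$ formula $[m \leq n \vee m \notin G]$, so that $\neg \Phi_e(G,n,m)$ is $[m > n \wedge m \in G]$ and the $\Pi^0_2$ statement $(\forall n)(\exists m)\neg \Phi_e(G,n,m)$ literally asserts that $G$ is infinite. By Lemma~\ref{lem:qb1-forcing-infinity} applied with this $\Phi_e$ (which is exactly the formula used in the proof of that lemma), there is a condition $p \in \Fcal$ with $p \Vdash (\forall n)(\exists m)\neg \Phi_e(G,n,m)$.

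Then by Lemma~\ref{lem:qb1-forcing-holds}(b) applied to this $p$ and this $\Phi_e$, the statement $(\forall n)(\exists m)\neg \Phi_e(G_\Fcal,n,m)$ holds, i.e.\ $(\forall n)(\exists m)[m > n \wedge m \in G_\Fcal]$, which is precisely the assertion that $G_\Fcal$ is infinite.

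There is no real obstacle here — the substantive content has already been extracted in the two prior lemmas. The only point to be careful about is to invoke Lemma~\ref{lem:qb1-forcing-holds}(b) rather than (a), since the infinitude of $G_\Fcal$ is naturally $\Pi^0_2$; and to check that the 2-genericity assumption is exactly what is required by both lemmas being invoked.
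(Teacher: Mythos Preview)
Your proposal is correct and matches the paper's own proof, which simply says the result is immediate from Lemma~\ref{lem:qb1-forcing-infinity} and Lemma~\ref{lem:qb1-forcing-holds}. You have spelled out the orchestration explicitly, but the approach is identical.
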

\begin{proof}
Immediate by Lemma~\ref{lem:qb1-forcing-infinity} and Lemma~\ref{lem:qb1-forcing-holds}.
\end{proof}

\section{Pigeonhole forcing}\label{sect:forcing-rt12}

We now design the actual notion of forcing used to construct solutions to the infinite pigeonhole principle. It can be seen as a tree version of the $\Qb_1$-forcing, with some effectiveness restrictions on the conditions.
Let $\Mcal \models \wkl$ be a countable Turing ideal, and let $A^0 \cup A^1 = \omega$.

\begin{definition}\label{def:rt12-forcing-conditions}
Let $\Pb_1$ denote the set of conditions $(\sigma^0_s, \sigma^1_s, X_s, C, U_s : s < k)$ such that
\begin{itemize}
	\item[(a)] $\sigma^i_s \subseteq A^i$ for every $s < k$
	\item[(b)] $X_s \cap \{ 0, \dots, \max_i |\sigma^i_s|\} = \emptyset$ ; $X_s \supseteq U_s$ for every $s < k$
	\item[(c)] $U_0, \dots, U_{k-1}$ is a $k$-cover of $\omega - \{ 0, \dots, \max_{i,s} |\sigma^i_s| \}$
	\item[(d)] $\bigcap_{e \in C} \Ucal_e$ is a largeness class containing only infinite sets
	\item[(e)] $\vec{X}', \vec{U}, C \in \Mcal$
\end{itemize}
\end{definition}

A Turing ideal $\Mcal = \{X_0, X_1, \dots \}$ is \emph{countable coded} by a set $B$ if $B = \bigoplus_i X_i$. An \emph{index} of some $Z \in \Mcal$ is then some $i \in \omega$ such that $Z = X_i$. Thanks to the notion of index, any $\Pb_1$-condition can be finitely presented as follows. An \emph{index} of a $\Pb_1$-condition $c = (\sigma^0_s, \sigma^1_s, X_s, C, U_s : s < k)$ is a tuple $(\sigma^0_s, \sigma^1_s, a_s, b, e_s : s < k)$ where $a_s$ is an index for $X_s$, $b$ an index for $C$ and $e_s$ is an index for $U_s$. Note that the existence of these indices is ensured by property (e) of Definition~\ref{def:rt12-forcing-conditions}.

\begin{definition}
The partial order on $\Pb_1$ is defined by 
$$
(\tau^0_s, \tau^1_s, Y_s, D, V_s : s < \ell) \leq (\sigma^0_s, \sigma^1_s, X_s, C, U_s : s < k)
$$
if there is a function $f : \ell \to k$ such that for every $i < 2$ and $s < \ell$, $\sigma^i_{f(s)} \preceq \tau^i_s$, $Y_s \subseteq X_{f(s)}$,
	$V_s \subseteq U_{f(s)}$, $C \subseteq D$ and $\tau^i_s - \sigma^i_{f(s)} \subseteq U_{f(s)}$.
\end{definition}

We can think of the $\Pb_1$-forcing as a tree version of the $\Qb_1$-forcing.
Given a $\Pb_1$-condition $c = (\sigma^0_s, \sigma^1_s, X_s, C, U_s : s < k)$,
each $s < k$ will be referred to as a \emph{branch} of $c$. 
Each branch $s$ represents two candidate $\Qb_1$-conditions $c^{[0,s]} = (\sigma^0_s, X_s, C, U_s)$ and $c^{[1,s]} = (\sigma^1_s, X_s, C, U_s)$. Actually, they will not be true $\Qb_1$-conditions in general, since there is no reason why $U_s$ would belong to $\bigcap_{e \in C} \Ucal_e$. For example, $U_s$ might be finite. We shall however see in Lemma~\ref{lem:rt12-has-valid-branch} that there must be at least one branch $s$ such that $c^{[0,s]}, c^{[1,s]} \in \Qb_1$.

The notion of condition extension enables to fork branches, according to the function $f$ witnessing the extension. We write $d \leq_f c$ if $d \leq c$ is witnessed by the function $f$. We say that the branch $t$ of $d$ \emph{refines} the branch $s$ of $c$
if $f(t) = s$.
We are interested in two particular kinds of extensions: the ones which do not fork any branch, and the ones which fork exactly one branch.
Given a condition $c = (\sigma^0_s, \sigma^1_s, X_s, C, U_s : s < k) \in \Pb_1$,
a \emph{simple extension} of $c$ is a condition $d \leq_f c$
such that $f$ is the identity function.
An \emph{$s$-extension} of $c$ is a condition $d \leq_f c$
such that $f^{-1}(t)$ is a singleton for every $t \neq s$.

\subsection{Validity and projections}

We now develop the framework which relates $\Pb_1$-forcing to $\Qb_1$-forcing.
Informally, a branch $s$ of a condition $c = (\sigma^0_s, \sigma^1_s, X_s, C, U_s : s < k) \in \Pb_1$ is a good candidate if either $c^{[0,s]}, c^{[1,s]} \in \Qb_1$ is a valid $\Qb_1$-condition.

\begin{definition}
Let $c = (\sigma^0_s, \sigma^1_s, X_s, C, U_s : s < k)$ be a $\Pb_1$-condition.
A branch $s$ is \emph{valid} in $c$ if $U_s \in \bigcap_{e \in C} \Ucal_e$.
\end{definition}

From the discussion above, it should be clear that if a branch $s$ is valid in $c$,
then $c^{[0,s]}, c^{[1,s]} \in \Qb_1$. We first prove that valid branches exist.

\begin{lemma}\label{lem:rt12-has-valid-branch}
Every $\Pb_1$-condition has a valid branch.
\end{lemma}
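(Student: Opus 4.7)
My plan is to reduce the existence of a valid branch directly to the largeness property of $\bigcap_{e \in C} \Ucal_e$. Write $N = \max_{i,s}|\sigma^i_s|$ and $F = \{0, \dots, N\}$, a finite set. By property~(c) of Definition~\ref{def:rt12-forcing-conditions}, the reservoirs $U_0, \dots, U_{k-1}$ collectively cover $\omega \setminus F$, so $U_0, \dots, U_{k-1}, F$ is a $(k+1)$-cover of $\omega$. I would then apply clause~(b) of the definition of largeness class, which quantifies over $k$-covers for every $k$, to obtain some piece of this $(k+1)$-cover lying in $\bigcap_{e \in C} \Ucal_e$.

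The class contains only infinite sets by property~(d), and $F$ is finite, so the chosen piece cannot be $F$. Hence there must exist some $s < k$ with $U_s \in \bigcap_{e \in C} \Ucal_e$, meaning that $s$ is a valid branch of $c$. I do not foresee any real obstacle here: the design of $\Pb_1$-conditions, specifically the $k$-cover requirement in~(c) combined with the largeness-class requirement in~(d), was set up precisely so that, after absorbing the finite missing initial segment as one extra piece, the largeness property applies at once. The reservoirs $X_s$, the strings $\sigma^i_s$ beyond their role in pinning down $F$, and the effectiveness clause~(e) play no part in this argument.
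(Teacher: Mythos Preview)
Your proof is correct and follows essentially the same approach as the paper: both add a finite initial segment to the reservoirs $U_0, \dots, U_{k-1}$ to form a cover of $\omega$, then invoke the largeness of $\bigcap_{e \in C} \Ucal_e$ together with the fact that it contains only infinite sets to conclude that some $U_s$ lies in the class. The only cosmetic difference is that the paper phrases it as a contradiction while you argue directly.
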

\begin{proof}
Let $c = (\sigma^0_s, \sigma^1_s, X_s, C, U_s : s < k) \in \Pb_1$.
Suppose for the sake of contradiction that for every $s < k$, $U_s \not \in \bigcap_{e \in C} \Ucal_e$.
Let $u \in \omega$ be large enough to that  $\{U_s : s < k\} \cup \{ \{0, \dots,u \}\}$
is a cover of $\omega$. This cover contradicts the fact that $\bigcap_{e \in C} \Ucal_e$ is a largeness class containing only infinite sets.
\end{proof}

Thanks to compactness, being a valid branch is not definitionally too complex.
In particular, if we work within a Turing ideal countable coded by a set $B$ such that $B' \leq \emptyset''$, then being a valid branch is $\Pi^0_3$.

\begin{lemma}\label{lem:rt12-validity-complexity}
Let $c$ be a $\Pb_1$-condition.
The sentence “The branch $s$ is valid in $c$" is $\Pi^0_2(\Mcal)$.
\end{lemma}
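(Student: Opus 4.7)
The plan is to unfold the definition of validity and count quantifiers directly. By definition, branch $s$ is valid in $c = (\sigma^0_s, \sigma^1_s, X_s, C, U_s : s < k)$ precisely when $U_s \in \bigcap_{e \in C} \Ucal_e$, which is equivalent to
$$
(\forall e)\bigl[e \notin C \;\vee\; U_s \in \Ucal_e\bigr].
$$
Since both $C$ and $U_s$ belong to $\Mcal$, it suffices to show that the matrix inside the universal quantifier is uniformly $\Sigma^0_1(\Mcal)$ in $e$; a single $\forall e$ then yields the claimed $\Pi^0_2(\Mcal)$ bound.

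The disjunct $e \notin C$ is $\Delta^0_1(\Mcal)$ since $C \in \Mcal$. For the disjunct $U_s \in \Ucal_e$, I would invoke the compactness of Cantor space together with upward closure: each $\Ucal_e$ is a $\Sigma^0_1$ class upward-closed under the superset relation, and any such class is the union of the basic upward cones $\{X : F \subseteq X\}$ for $F$ ranging over a uniformly c.e.\ family $W_e$ of finite sets (obtained by taking the minimal finite witnesses). Consequently $U_s \in \Ucal_e$ iff $(\exists F \in W_e)(F \subseteq U_s)$, which is $\Sigma^0_1$ in $(U_s, e)$, hence $\Sigma^0_1(\Mcal)$ since $U_s \in \Mcal$.

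Combining, the matrix $e \notin C \vee U_s \in \Ucal_e$ is $\Sigma^0_1(\Mcal)$ uniformly in $e$, so the statement is $\Pi^0_2(\Mcal)$. There is essentially no real obstacle here; the only subtle point is using upward closure to re-present $\Ucal_e$ in terms of a c.e.\ family of finite generators, which prevents a hidden extra $\exists n$ quantifier over initial segments from inflating the complexity. This is the same compactness observation used in the proof of Lemma~\ref{lem:largeness-class-complexity}.
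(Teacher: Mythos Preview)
Your proposal is correct and takes essentially the same approach as the paper: both unfold the definition of validity as $(\forall e \in C)\, U_s \in \Ucal_e$ and use the upward closure of $\Ucal_e$ together with its $\Sigma^0_1$ presentation to show that $U_s \in \Ucal_e$ is $\Sigma^0_1(U_s)$ uniformly in $e$, giving $\Pi^0_2(C \oplus U_s)$ and hence $\Pi^0_2(\Mcal)$. The only cosmetic difference is that the paper groups the indices, writing ``for every $n \in C$ there is a finite $E \subseteq U_s$ with $E \in \bigcap_{e \in C,\, e < n} \Ucal_e$,'' whereas you handle each $e$ separately via the c.e.\ family $W_e$ of finite generators; the quantifier count is identical.
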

\begin{proof}
Say $c = (\sigma^0_s, \sigma^1_s, X_s, C, U_s : s < k)$.
Then $s$ is valid in $c$ if for every $n \in C$, there is a finite set $E \subseteq U_s$ such that $E \in \bigcap_{e \in C, e < n} \Ucal_e$. The sentence is $\Pi^0_2(C \oplus X_s \oplus U_s)$, 
hence $\Pi^0_2(\Mcal)$.
\end{proof}


By upward-closure of the notion of largeness class, if a branch $t$ of a $\Pb_1$-condition $d$ is valid, and $d \leq_f c$, then the branch $f(t)$ of $c$ is also valid. Therefore, given an infinite decreasing sequence of conditions, the valid branches form an infinite subtree.

\begin{lemma}\label{lem:pb1-to-qb1-compatibility}
Suppose $d \leq_f c \in \Pb_1$ and $d^{[i,s]} \in \Qb_1$.
Then 
\begin{itemize}
	\item[(a)] $c^{[i, f(s)]} \in \Qb_1$
	\item[(b)] $d^{[i,s]} \leq c^{[i, f(s)]}$.
\end{itemize}
\end{lemma}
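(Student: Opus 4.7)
The plan is to unpack both the $\Qb_1$-membership conditions and the $\Qb_1$-order conditions and show that each required clause is already supplied either by the hypothesis that $c \in \Pb_1$, by the witness $f$ of $d \leq_f c$, or by upward closure of the classes $\Ucal_e$. Write $c = (\sigma^0_t, \sigma^1_t, X_t, C, U_t : t < k)$ and $d = (\tau^0_r, \tau^1_r, Y_r, D, V_r : r < \ell)$, so that $d^{[i,s]} = (\tau^i_s, Y_s, D, V_s)$ and $c^{[i,f(s)]} = (\sigma^i_{f(s)}, X_{f(s)}, C, U_{f(s)})$.

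For (a), I would verify the three defining clauses of $\Qb_1$ for $c^{[i,f(s)]}$. Clauses (a) and (b) of the $\Qb_1$-definition, namely $X_{f(s)} \cap \{0,\dots,|\sigma^i_{f(s)}|\}=\emptyset$ with $X_{f(s)} \supseteq U_{f(s)}$, and $\bigcap_{e \in C}\Ucal_e$ being a largeness class of infinite sets, are immediate from $c \in \Pb_1$. The only nontrivial clause is that $U_{f(s)} \in \bigcap_{e \in C}\Ucal_e$, i.e.\ that $f(s)$ is a valid branch of $c$. Here I use that $d^{[i,s]} \in \Qb_1$ gives $V_s \in \bigcap_{e \in D}\Ucal_e$; since $C \subseteq D$ (by definition of $d \leq_f c$), we have $V_s \in \bigcap_{e \in C}\Ucal_e$; and since $V_s \subseteq U_{f(s)}$ (again by definition of $d \leq_f c$) and every $\Ucal_e$ is upward closed under supersets, it follows that $U_{f(s)} \in \bigcap_{e \in C}\Ucal_e$.

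For (b), I would just match the five clauses defining $\leq$ on $\Qb_1$ with the corresponding clauses packaged in $d \leq_f c$: $\sigma^i_{f(s)} \preceq \tau^i_s$, $Y_s \subseteq X_{f(s)}$, $V_s \subseteq U_{f(s)}$, $C \subseteq D$, and $\tau^i_s - \sigma^i_{f(s)} \subseteq U_{f(s)}$ are exactly the requirements imposed by $d \leq_f c$, so $d^{[i,s]} \leq c^{[i,f(s)]}$ in $\Qb_1$.

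There is no real obstacle here; the only substantive observation is the upward-closure argument in (a), which formalises the intuition (already flagged before the lemma) that $\Qb_1$-validity of a branch of $d$ propagates upwards along $f$ to the refined branch of $c$. Everything else is a direct matching of definitions.
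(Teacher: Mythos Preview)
Your proof is correct and follows essentially the same approach as the paper: both identify the one nontrivial point in (a) as showing $U_{f(s)} \in \bigcap_{e \in C}\Ucal_e$, and both derive it from $V_s \in \bigcap_{e \in D}\Ucal_e$ together with $V_s \subseteq U_{f(s)}$, upward closure of the $\Ucal_e$, and $C \subseteq D$; part (b) is in both cases a direct unpacking of the extension relation.
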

\begin{proof}
Say $c = (\sigma^0_s, \sigma^1_s, X_s, C, U_s : s < k)$ and $d = (\tau^0_s, \tau^1_s, Y_s, D, V_s : s < \ell)$.

(a) $X_s \cap \{ 0, \dots, |\sigma^i_s|\} = \emptyset$ and $\bigcap_{e \in C} \Ucal_e$ is a largeness class containing only infinite sets.
We need to check that $U_{f(s)} \in \bigcap_{e \in C} \Ucal_e$.
By assumption, $V_s \in \bigcap_{e \in D} \Ucal_e$.
Since $V_s \subseteq U_{f(s)}$, then $U_{f(s)} \in \bigcap_{e \in D} \Ucal_e$. Moreover, $C \subseteq D$, so $\bigcap_{e \in D} \Ucal_e \subseteq \bigcap_{e \in C} \Ucal_e$, and we are done.

(b) This is immediate by definition of the extension relation of $\Pb_1$.
\end{proof}


We now define the notion of projector, which in the context of an infinite decreasing sequence of $\Pb_1$-conditions, corresponds to an infinite path through the tree of valid branches.

\begin{definition}
Let $\Fcal \subseteq \Pb_1$ be a collection. An \emph{$\Fcal$-projector}
is a function $P : \Fcal \to \omega$ such that
\begin{itemize}
	\item[(a)] The branch $P(c)$ is valid in $c$ for every $c \in \Fcal$
	\item[(b)] If $d \leq_f c \in \Fcal$ then $P(c) = f(P(d))$.
\end{itemize}
According to our notation, we write $\Fcal^{[i,P]} = \{ c^{[i, P(c)]} : c \in \Fcal \}$. An \emph{$\Fcal$-projection} is a collection $\Fcal^{[i,P]} = \{ c^{[i, P(c)]} : c \in \Fcal \}$
for some $\Fcal$-projector $P$.
\end{definition}

\subsection{The forcing question}\label{subsect:rt12-forcing-question}

We now design a disjunctive forcing question to control the second jump, in the same spirit as the one designed by Cholak, Jockusch and Slaman~\cite{Cholak2001strength} to control the first jump. Given a branch $s$ of a condition $c$ and two $\Sigma^0_2$ formulas $\varphi_0(G)$ and $\varphi_1(G)$, we define a $\Sigma^0_1(\Mcal)$ relation $c \qvdash_s \varphi_0(G) \vee \varphi_1(G)$, such that
\begin{itemize}
	\item If $c \qvdash_s \varphi_0(G) \vee \varphi_1(G)$, then there is an extension $d$ such that for every projector $P$ going threw the branch $s$ of $c$ ($P(c) = s$), then $d^{[i,P(d)]} \Vdash \varphi_i(G)$ for some $i < 2$.
	\item If  $c \nqvdash_s \varphi_0(G) \vee \varphi_1(G)$, then there is an extension $d$ such that for every projector $P$ going threw the branch $s$ of $c$ ($P(c) = s$), then $d^{[i,P(d)]} \Vdash \neg \varphi_i(G)$ for some $i < 2$.
\end{itemize}
Fix a sufficiently generic $\Pb_1$-filter $\Fcal$ and an $\Fcal$-projector $P$.
By a pairing argument and using the disjunctive forcing question, there must be a side $i < 2$ such that $\Fcal^{[i,P]}$ is 2-generic.
%
%
\begin{definition}
Let $c = (\sigma^0_s, \sigma^1_s, X_s, C, U_s : s < k) \in \Pb_1$,
$s < k$, and let $\Phi_{e_0}(G, n, m)$ and $\Phi_{e_1}(G, n, m)$ be two $\Delta_0$ formulas. Define the relation
$$
c \qvdash_s (\exists n)(\forall m)\Phi_{e_0}(G, n, m) \vee (\exists n)(\forall m)\Phi_{e_1}(G, n, m)
$$
to hold if for every $Z^0 \cup Z^1 = U_s$, there is finite set $F \subseteq C$, some $\rho_0, \dots, \rho_{a-1} \subseteq Z^0$ and $n_0, \dots, n_{a-1} \in \omega$, some $\mu_0, \dots, \mu_{b-1} \subseteq Z^1$ and $u_0, \dots, u_{b-1} \in \omega$ such that 
$$
\bigcap_{e \in F} \Ucal_e \bigcap_{j < a} \Ucal_{\zeta(e_0, \sigma^0_s \cup \rho_j, n_j)} \bigcap_{j < b} \Ucal_{\zeta(e_1, \sigma^1_s \cup \mu_j, u_j)}
$$
is not a largeness class.
\end{definition}

\begin{lemma}\label{lem:rt12-forcing-question-complexity}
Let $s$ be a branch in $c \in \Pb_1$ and let $\Phi_{e_0}(G, n, m)$ and $\Phi_{e_1}(G, n, m)$ be two $\Delta_0$ formulas. The relation 
$$
c \qvdash_s (\exists n)(\forall m)\Phi_{e_0}(G, n, m) \vee (\exists n)(\forall m)\Phi_{e_1}(G, n, m)
$$
is $\Sigma^0_1(\Mcal)$.
\end{lemma}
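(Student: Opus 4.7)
The plan is to reduce the universal quantifier over 2-covers of $U_s$ to a bounded quantifier by a König-style compactness argument, and then use the fact that $\emptyset' \in \Mcal$ to collapse the $\Sigma^0_2$ complexity of ``not a largeness class'' into $\Sigma^0_1(\Mcal)$.

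Unfolding the definition, $c \qvdash_s \ldots$ asserts that for every 2-cover $Z^0 \cup Z^1 = U_s$, there exists finite data $(F, \rho_j, n_j, \mu_j, u_j)$ with $F \subseteq C$ finite, $\rho_j \subseteq Z^0$ and $\mu_j \subseteq Z^1$ finite, such that a specific finite intersection $\Vcal$ of $\Sigma^0_1$ upward-closed classes fails to be a largeness class. Let $T$ be the tree of partial 2-colorings $\tau$ of initial segments of $U_s$ such that no data compatible with $\tau$ (in the sense $\rho_j \subseteq \tau^{-1}(0)$ and $\mu_j \subseteq \tau^{-1}(1)$) yields a non-large $\Vcal$. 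I first observe that $T$ is prefix-closed: if $\tau' \succeq \tau$, then any data compatible with $\tau$ is already compatible with $\tau'$, so failure of the existential at $\tau'$ implies failure at $\tau$. König's lemma therefore yields
\[
c \qvdash_s \ldots \iff \exists N \, \forall \tau : U_s \cap \{0, \ldots, N-1\} \to 2, \, \exists \text{ finite data compatible with } \tau,\, [\Vcal \text{ is not a largeness class}],
\]
since an infinite path through $T$ corresponds exactly to a 2-cover witnessing $c \nqvdash_s \ldots$.

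Next I bound the inner complexity. Since $\Vcal$ is a finite intersection of $\Sigma^0_1$ upward-closed classes with effectively given indices (elements of $C \in \Mcal$ together with values of the computable $\zeta$), $\Vcal$ itself is $\Sigma^0_1$ uniformly in the data. By Lemma~\ref{lem:largeness-class-complexity}, ``is a largeness class'' is $\Pi^0_2$ in the index, so ``is not a largeness class'' is uniformly $\Sigma^0_2$. I now claim $\emptyset' \in \Mcal$: by property~(e) of Definition~\ref{def:rt12-forcing-conditions}, $\vec{X}' \in \Mcal$, and since $\emptyset' \leq_T \vec{X}'$ while $\Mcal$ is a Turing ideal, $\emptyset' \in \Mcal$. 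Therefore every $\Sigma^0_2$ statement is $\Sigma^0_1(\emptyset') \subseteq \Sigma^0_1(\Mcal)$, and the bracketed inner condition is uniformly $\Sigma^0_1(\Mcal)$ in its data.

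Combining, the reformulated question has the shape $\exists N \, \forall \tau \in 2^N \, \exists \text{ finite data},\, [\Sigma^0_1(\Mcal)]$. Bounded universal and countable existential quantifications preserve $\Sigma^0_1(\Mcal)$, so the relation is $\Sigma^0_1(\Mcal)$. I expect the main obstacle to be the compactness step itself — in particular, carefully verifying that the tree $T$ is prefix-closed and that König's lemma legitimately rewrites the $\forall$-over-2-covers as a bounded quantifier block — while the remaining complexity count is routine once $\emptyset' \in \Mcal$ is established.
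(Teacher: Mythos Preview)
Your proposal is correct and follows essentially the same approach as the paper: reduce the universal quantifier over $2$-covers of $U_s$ to a bounded quantifier over finite $2$-covers of a finite subset of $U_s$ by compactness, invoke Lemma~\ref{lem:largeness-class-complexity} to see that ``not a largeness class'' is $\Sigma^0_2$, and use $\emptyset' \in \Mcal$ to conclude the relation is $\Sigma^0_1(C \oplus U_s \oplus \emptyset')$, hence $\Sigma^0_1(\Mcal)$. The paper simply states the compact reformulation directly (``there is a finite set $E \subseteq U_s$ such that for every $E_0 \cup E_1 = E$, \dots'') without spelling out the K\"onig argument, and does not pause to justify $\emptyset' \in \Mcal$; your version is more explicit on both points but otherwise identical in substance.
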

\begin{proof}
By compactness, the relation holds if there is a finite set $E \subseteq U_s$ such that for every $E_0 \cup E_1 = E$, there is some finite set $F \subseteq C$, some $\rho_0, \dots, \rho_{a-1} \subseteq E_0$ and $n_0, \dots, n_{a-1} \in \omega$, some $\mu_0, \dots, \mu_{b-1} \subseteq E_1$ and $u_0, \dots, u_{b-1} \in \omega$ such that 
$$
\bigcap_{e \in F} \Ucal_e \bigcap_{j < a} \Ucal_{\zeta(e_0, \sigma^0_s \cup \rho_j, n_j)} \bigcap_{j < b} \Ucal_{\zeta(e_1, \sigma^1_s \cup \mu_j, u_j)}
$$
is not a largeness class.
By Lemma~\ref{lem:largeness-class-complexity}, not being a largeness class is $\Sigma^0_2$. The overall relation is $\Sigma^0_1(C \oplus U_s \oplus \emptyset')$, hence is $\Sigma^0_1(\Mcal)$.
\end{proof}

\begin{lemma}\label{lem:rt12-forcing-question-spec}
Let $s$ be a branch in $c \in \Pb_1$ and let $\Phi_{e_0}(G, n, m)$ and $\Phi_{e_1}(G, n, m)$ be two $\Delta_0$ formulas.
\begin{itemize}
	\item[(a)] If $c \qvdash_s (\exists n)(\forall m)\Phi_{e_0}(G, n, m) \vee (\exists n)(\forall m)\Phi_{e_1}(G, n, m)$, then there is an $s$-extension $d \leq_f c$ such that for every valid branch $t$ in $d$ for which $f(t) = s$, there is some $i < 2$ such that $d^{[i,t]} \Vdash (\exists n)(\forall m)\Phi_{e_i}(G, n, m)$.
	\item[(b)] If $c \nqvdash_s (\exists n)(\forall m)\Phi_{e_0}(G, n, m) \vee (\exists n)(\forall m)\Phi_{e_1}(G, n, m)$, then there is an $s$-extension $d \leq_f c$ such that for every valid branch $t$ in $d$ for which $f(t) = s$, there is some $i < 2$ such that $d^{[i,t]} \Vdash (\forall n)(\exists m)\neg \Phi_{e_i}(G, n, m)$. 
\end{itemize}
Moreover, an index of $d$ can be found $A \oplus \Mcal$-uniformly in an index of $c$, $s$, $e_0$ and $e_1$ and the knowledge of which case holds.
\end{lemma}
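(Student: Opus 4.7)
I would prove (a) and (b) separately, mirroring the disjunctive structure of $\qvdash_s$. In both cases the construction leaves every branch $s' \neq s$ of $c$ untouched and forks branch $s$ into several new branches; in (a) the new $D$ is just $C$, while in (b) $D$ absorbs every potentially relevant $\zeta$-index.

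For (a), I would apply $c \qvdash_s \varphi_0 \vee \varphi_1$ to the 2-cover $Z^i := U_s \cap A^i$. This yields a finite set $F \subseteq C$, finite lists $(\rho_j,n_j)_{j<a}$ with $\rho_j \subseteq U_s \cap A^0$, and $(\mu_j,u_j)_{j<b}$ with $\mu_j \subseteq U_s \cap A^1$, such that
\[
\Acal^{*} := \bigcap_{e\in F}\Ucal_e \;\cap\; \bigcap_{j<a}\Ucal_{\zeta(e_0,\sigma^0_s\cup\rho_j,n_j)} \;\cap\; \bigcap_{j<b}\Ucal_{\zeta(e_1,\sigma^1_s\cup\mu_j,u_j)}
\]
is not a largeness class. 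Through the compactness behind Lemma~\ref{lem:largeness-class-complexity} I would extract a finite cover $W_0,\dots,W_{k'-1}$ of $\omega$ together with, for each $l < k'$, the index $i_l$ of a class of $\Acal^{*}$ not containing $W_l$. Picking $N$ larger than $|\sigma^0_s \cup \rho_j|$ and $|\sigma^1_s \cup \mu_j|$ for all $j$, the new branch $l$ would be given $V_l := U_s \cap W_l \cap [N,\infty)$, $Y_l := V_l$, and the pair $(\tau^0_l,\tau^1_l)$ chosen by the type of $i_l$: $(\sigma^0_s \cup \rho_j, \sigma^1_s)$ when $i_l = \zeta(e_0,\sigma^0_s\cup\rho_j,n_j)$, $(\sigma^0_s, \sigma^1_s \cup \mu_j)$ when $i_l = \zeta(e_1,\sigma^1_s\cup\mu_j,u_j)$, and $(\sigma^0_s,\sigma^1_s)$ otherwise. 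Setting $D := C$ makes $d \in \Pb_1$ and $d \leq_f c$ routine, with $f$ sending each new branch to $s$ and fixing the others. For a valid branch $l$, $V_l \in \bigcap_{e\in C}\Ucal_e \subseteq \bigcap_{e\in F}\Ucal_e$, so $i_l \notin F$ and the failure must be of $\zeta$-type; upward closure of the relevant $\Ucal$-class combined with $Y_l \subseteq W_l$ unfolds through the definition of $\zeta$ into $d^{[i,l]} \Vdash (\exists n)(\forall m)\Phi_{e_i}(G,n,m)$ for the appropriate side $i$.

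For (b), the negation of $\qvdash_s$ defines a non-empty $\Pi^0_1(\Mcal)$ class (under a standard tree coding) of 2-covers $Z^0 \cup Z^1 = U_s$ for which every finite selection $(F,(\rho_j,n_j),(\mu_j,u_j))$ produces an intersection that is still a largeness class; via $\Mcal \models \wkl$ I would fix such a cover $(Z^0,Z^1) \in \Mcal$ and set
\[
D := C \;\cup\; \{\zeta(e_0,\sigma^0_s\cup\rho,n) : \rho \subseteq Z^0,\; n \in \omega\} \;\cup\; \{\zeta(e_1,\sigma^1_s\cup\mu,n) : \mu \subseteq Z^1,\; n \in \omega\}.
\]
Every finite sub-intersection of $\bigcap_{e\in D}\Ucal_e$ is a largeness class by the defining property of $(Z^0,Z^1)$, so Lemma~\ref{lem:decreasing-largeness-yields-largeness} delivers that $\bigcap_{e\in D}\Ucal_e$ is itself a largeness class containing only infinite sets. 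The extension $d$ would refine branch $s$ into two new branches $l \in \{0,1\}$ with $\tau^i_l := \sigma^i_s$ and $V_l := Y_l := Z^l$. For a valid branch $l$, $Z^l \in \bigcap_{e\in D}\Ucal_e$ lies in every $\Ucal_{\zeta(e_l,\sigma^l_s\cup\rho,n)}$ with $\rho \subseteq Z^l$, which is exactly the condition for $d^{[l,l]} \Vdash (\forall n)(\exists m)\neg \Phi_{e_l}(G,n,m)$ from Definition~\ref{def:qb-forcing-relation}(b).

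Uniformity follows in both cases: in (a) a witness is enumerated by a $\Sigma^0_1(\Mcal)$ search, and in (b) a cover is located inside the fixed enumeration of $\Mcal$ via $\wkl$; the remaining data is computable from $A \oplus \Mcal$. The main obstacle is really the coupling between validity and largeness — in (a), using $F \subseteq C$ to exclude $F$-type failures along valid branches, and in (b), preserving largeness after the massive enlargement of $D$, where Lemma~\ref{lem:decreasing-largeness-yields-largeness} is the essential ingredient.
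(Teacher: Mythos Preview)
Your overall strategy matches the paper's, but there is a real effectiveness gap in both parts arising from your choice $Y_l := V_l$. A $\Pb_1$-condition requires $\vec{X}' \in \Mcal$ (Definition~\ref{def:rt12-forcing-conditions}(e)), and this is exactly the reason the paper carries two reservoirs $X$ and $U$ rather than one: $X$ must stay ``one jump below'' $U$. In (a), you extract a cover $W_0,\dots,W_{k'-1}$ by compactness and set $Y_l = U_s \cap W_l \cap [N,\infty)$; even if you place the $W_l$ in $\Mcal$ via $\wkl$, there is no reason $(U_s \cap W_l)'$ lies in $\Mcal$, since only $U_s$ and not $U_s'$ is assumed there. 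The paper fixes this by applying the \emph{low basis theorem relative to $X_s$} to the $\Pi^0_1$ class of bad covers, obtaining $\vec R$ with $(\vec R \oplus X_s)' \le_T X_s'$, and then taking $Y_{s_t} = X_s \cap R_t$ (not $U_s \cap R_t$) so that $Y_{s_t}' \le_T X_s' \in \Mcal$, while $V_{s_t} = U_s \cap R_t \in \Mcal$ directly.

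In (b) the same problem reappears: you set $Y_l = Z^l$, but $Z^l$ is only known to be in $\Mcal$, not its jump. The paper simply keeps $Y_{s_i} = X_s$ unchanged for both new branches; this is harmless for the forcing relation, since $d^{[i,t]} \Vdash (\forall n)(\exists m)\neg\Phi_{e_i}(G,n,m)$ depends only on $V_{s_i} = Z^i$ and $D$, not on $Y_{s_i}$. Once you make these two repairs --- low basis for the cover in (a), and leaving $Y = X_s$ in (b) --- the rest of your argument goes through and coincides with the paper's proof.
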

\begin{proof}
Say $c = (\sigma^0_s, \sigma^1_s, X_s, C, U_s : s < k)$. 

(a) Let $Z^0 = U_s \cap A^0$ and $Z^1 = U_s \cap A^1$. Unfolding the definition of the forcing question, there is finite set $F \subseteq C$, some $\rho_0, \dots, \rho_{a-1} \subseteq Z^0$ and $n_0, \dots, n_{a-1} \in \omega$, some $\mu_0, \dots, \mu_{b-1} \subseteq Z^1$ and $u_0, \dots, u_{b-1} \in \omega$ such that 
$$
\bigcap_{e \in F} \Ucal_e \bigcap_{j < a} \Ucal_{\zeta(e_0, \sigma^0_s \cup \rho_j, n_j)} \bigcap_{j < b} \Ucal_{\zeta(e_1, \sigma^1_s \cup \mu_j, u_j)}
$$
is not a largeness class. Given $\ell \in \omega$, let $\Ccal_\ell$ be the $\Pi^{0,X_s}_1$ class of all $\ell$-covers of $\omega$ $R_0, \dots, R_{\ell-1}$ such that for every $t < \ell$, $R_t \not \in \bigcap_{e \in F} \Ucal_e \bigcap_{j < a} \Ucal_{\zeta(e_0, \sigma^0_s \cup \rho_j, n_j)} \bigcap_{j < b} \Ucal_{\zeta(e_1, \sigma^1_s \cup \mu_j, u_j)}$.
	By assumption, $\Ccal_\ell \neq \emptyset$ for some $\ell \in \omega$. By the low basis theorem, pick some $\ell$-cover of $\omega$ $R_0, \dots, R_{\ell-1}$ in $\Ccal_\ell$
	which is low over $X_s$.
	
	Define the $\Pb_1$-condition $d = (\tau^0_s, \tau^1_s, Y_s, C, V_s : s < k + \ell - 1)$ obtained from $c$ by splitting the branch $s$ into $\ell$ branches $s_0, \dots, s_{\ell-1}$, and leaving the other branches unchanged.
	For each $t < \ell$, let $V_{s_t} = U_s \cap R_t$, $Y_{s_t} = X_s \cap R_t$. If $R_t \not \in \Ucal_e$ for some $e \in C$, then $\tau^0_{s_t} = \sigma^0_s$ and $\tau^1_{s_t} = \sigma^1_s$. If $R_t \not \in \Ucal_{\zeta(e_0, \sigma^0_s \cup \rho_j, n_j)}$ for some $j < a$, let $\tau^1_{s_t} = \sigma^1_s$ and $\tau^0_{s_t} = \sigma^0_s \cup \rho_j$. If $R_t \not \in \Ucal_{\zeta(e_1, \sigma^1_s \cup \mu_j, u_j)}$ for some $j < b$, let $\tau^0_{s_t} = \sigma^0_s$ and $\tau^1_{s_t} = \sigma^1_s \cup \mu_j$.
	
	The $\Pb_1$-condition $d$ is by construction an $s$-extension of $c$. We now claim that for every valid branch $s_t$ in $d$, there is some $i < 2$ such that $d^{[i,s_t]} \Vdash (\exists n)(\forall m)\Phi_{e_i}(G, n, m)$. Since $s_t$ is valid in $d$, $R_t \cap U_s \in \bigcap_{e \in C} \Ucal_e$. In particular, $R_t \in \bigcap_{e \in C} \Ucal_e$, so there is some $i < 2$ such that $R_t \not \in \Ucal_{\zeta(e_i, \tau^i_{s_t}, n)}$  for some $n$.
		Unfolding the definition, $(\forall \tau \subseteq R_t)(\forall m)\Phi_{e_i}(\tau^i_{s_t} \cup \tau, n, m)$, so $d^{[i,s_t]} \Vdash (\exists n)(\forall m)\Phi_{e_i}(G, n, m)$. 

(b) Let $\Dcal$ be the class of all $Z^0 \oplus Z^1$ such that $Z^0 \cup Z^1 = U_s$ such that for every finite set $F \subseteq C$, every $\rho_0, \dots, \rho_{a-1} \subseteq Z^0$ and every $n_0, \dots, n_{a-1} \in \omega$, every $\mu_0, \dots, \mu_{b-1} \subseteq Z^1$ and every $u_0, \dots, u_{b-1} \in \omega$,  $\bigcap_{e \in F} \Ucal_e \bigcap_{j < a} \Ucal_{\zeta(e_0, \sigma^0_s \cup \rho_j, n_j)} \bigcap_{j < b} \Ucal_{\zeta(e_1, \sigma^1_s \cup \mu_j, u_j)}$ is a largeness class. By Lemma~\ref{lem:largeness-class-complexity}, being a largeness class for a $\Sigma^0_1$ class is $\Pi^0_2$, hence $\Pi^0_1(\emptyset')$. Since $U_s$ and $\emptyset'$ both belong to $\Mcal$, the class $\Dcal$ is $\Pi^0_1(\Mcal)$. Since $\Mcal \models \wkl$, there is some $Z^0 \oplus Z^1 \in \Dcal \cap \Mcal$. Let $D = C \cup \{ \zeta(e_i, \sigma^i_s \cup \rho, n) : i < 2, \rho \subseteq X_s \cap Z^i, n \in \omega)$. 
By Lemma~\ref{lem:decreasing-largeness-yields-largeness}, $\bigcap_{e \in D} \Ucal_e$ is a largeness class.

	
	Define the $\Pb_1$-condition $d = (\tau^0_s, \tau^1_s, Y_s, D, V_s : s < k +1)$ obtained from $c$ by splitting the branch $s$ into 2 branches $s_0, s_1$, and leaving the other branches unchanged. For each $i < 2$, let $Y_{s_i} = X_s$, $V_{s_i} = V \cap Z^i$, $\tau^0_{s_0} = \sigma^0_s$ and $\tau^1_{s_1} = \sigma^1_s$.
	For each $i < 2$, if $s_i$ is valid in $d$, then $d^{[i,s_i]} \Vdash (\forall n)(\exists m)\neg \Phi_{e_i}(G, n, m)$. This completes the proof of the lemma.
\end{proof}

\begin{lemma}
Let $\Fcal$ be a sufficiently generic $\Pb_1$-filter, and let $P$ be an $\Fcal$-projector.
There is some $i < 2$ such that $\Fcal^{[i,P]}$ is a 2-generic $\Qb_1$-filter.
\end{lemma}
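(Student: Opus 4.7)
The plan has two parts: verify that each $\Fcal^{[i,P]}$ is a $\Qb_1$-filter, and then use a disjunctive pairing argument to conclude that at least one of the two sides is 2-generic. The filter property is essentially formal. Since $P(c)$ is valid in every $c \in \Fcal$, Lemma~\ref{lem:pb1-to-qb1-compatibility}(a) ensures $c^{[i,P(c)]} \in \Qb_1$. Moreover, given $c^{[i,P(c)]}$ and $c'^{[i,P(c')]}$ in $\Fcal^{[i,P]}$, directedness of $\Fcal$ yields $d \in \Fcal$ below both $c$ and $c'$; since $P(c) = f(P(d))$ whenever $d \leq_f c$, Lemma~\ref{lem:pb1-to-qb1-compatibility}(b) then gives $d^{[i,P(d)]} \leq c^{[i,P(c)]}$ and similarly $\leq c'^{[i,P(c')]}$, so $\Fcal^{[i,P]}$ is directed.

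For the 2-genericity I would introduce, for each pair of $\Sigma^0_2$ formulas $(\varphi_0,\varphi_1)$, the set
\[
D_{\varphi_0,\varphi_1} = \{\, d \in \Pb_1 : \text{for every valid branch } u \text{ of } d,\ \exists i<2,\ d^{[i,u]} \Vdash \varphi_i(G) \text{ or } d^{[i,u]} \Vdash \neg\varphi_i(G)\,\}
\]
and show it is dense. Given $c \in \Pb_1$ with $k$ branches, I would process them one at a time: at step $t$, identify the unique branch $s$ in the current condition $d_t$ descending from the next unprocessed branch of $c$, apply Lemma~\ref{lem:rt12-forcing-question-spec} at $s$ with the pair $(\varphi_0,\varphi_1)$ (whichever case of the forcing question holds), and take the resulting $s$-extension $d_{t+1}$. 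By definition of $s$-extension, the other branches of $d_t$ are preserved up to renaming and shrinking of the reservoirs, so any decision already forced on a sibling branch at an earlier step persists under the further extension via Lemma~\ref{lem:pb1-to-qb1-compatibility}(b) and Lemma~\ref{lem:qb1-forcing-closed-under-extension}. After $k$ steps, every valid branch $u$ of the final $d^* \leq c$ descends from some original branch $t$ of $c$; its image in $d_{t+1}$ is still valid by upward propagation of validity, so it received a decision at step $t$, and that decision persists to $u$. Hence $d^* \in D_{\varphi_0,\varphi_1}$.

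Sufficient genericity of $\Fcal$ then provides some $d^* \in \Fcal \cap D_{\varphi_0,\varphi_1}$; applying the defining property at the valid branch $P(d^*)$ produces $i<2$ with $d^{*[i,P(d^*)]}$ deciding $\varphi_i(G)$. The pairing argument closes the proof: if neither $\Fcal^{[0,P]}$ nor $\Fcal^{[1,P]}$ were 2-generic, there would exist $\Sigma^0_2$ formulas $\varphi_0,\varphi_1$ such that no element of $\Fcal^{[i,P]}$ decides $\varphi_i$, but running the density argument on this specific pair produces precisely such an element on one of the two sides, a contradiction. I expect the main obstacle to be the branch bookkeeping inside the iterative construction of $d^*$, specifically verifying that decisions obtained at earlier steps are preserved at every later $s$-extension by the combination of validity propagation and the compatibility lemmas for the forcing relation.
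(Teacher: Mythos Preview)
Your proof is correct and close in spirit to the paper's, but the two differ in how the density step is organized. The paper works branch-by-branch relative to the projector: given $c \in \Fcal$, it applies Lemma~\ref{lem:rt12-forcing-question-spec} only at the single branch $s = P(c)$, obtains one $s$-extension $d$ deciding $\varphi_i$ on all valid refinements of $s$, and invokes genericity to put such a $d$ in $\Fcal$; since $P(d)$ is valid and refines $P(c)$, this suffices. You instead build a projector-independent dense set $D_{\varphi_0,\varphi_1}$ by iterating Lemma~\ref{lem:rt12-forcing-question-spec} across \emph{all} branches of $c$, so that every valid branch of $d^*$ is decided. Your route does more work (the branch bookkeeping and preservation argument via Lemmas~\ref{lem:pb1-to-qb1-compatibility} and~\ref{lem:qb1-forcing-closed-under-extension}), but it has the conceptual advantage that the dense sets are specified before the projector $P$ is chosen, which matches the order of quantifiers in the statement more transparently. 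The paper's shortcut is legitimate provided one reads ``sufficiently generic'' as meeting, for each condition $c$ and each branch $s$ of $c$, the set of extensions handling $s$; this is still a countable family, so nothing is lost. Your verification that $\Fcal^{[i,P]}$ is directed via Lemma~\ref{lem:pb1-to-qb1-compatibility} matches the paper's one-line appeal to the same lemma.
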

\begin{proof}
By Lemma~\ref{lem:rt12-forcing-question-spec}, for every $\Pb_1$-condition $c \in \Fcal$ and every pair of $\Sigma^0_2$ formulas $\varphi_0(G), \varphi_1(G)$, there is a $P(c)$-extension $d$ of $c$
such that for every valid branch $t$ in $d$ refining the branch $P(c)$ of $c$,
$d^{[i,t]} \Vdash \varphi_i(G)$ or $d^{[i,t]} \Vdash \neg \varphi_i(G)$ for some $i < 2$.
Since $\Fcal$ is sufficiently generic, there is such an extension $d \in \Fcal$,
and since $P(d)$ is valid in $d$ and refines the branch $P(c)$ of $c$,
$d^{[i,P(d)]} \Vdash \varphi_i(G)$ or $d^{[i,P(d)]} \Vdash \neg \varphi_i(G)$ for some $i < 2$.
By a pairing argument, there is some $i < 2$ such that for every $\Sigma^0_2$
formula $\varphi(G)$, there is some $c \in \Fcal$ such that $c^{[i,P(c)]} \Vdash \varphi(G)$
or $c^{[i,P(c)]} \Vdash \neg \varphi(G)$. Therefore $\Fcal^{[i,P]}$ is 2-generic.
Since $P$ is an $\Fcal$-projector, $\Fcal^{[i,P]} \subseteq \Qb_1$.
Since $\Fcal$ is a filter, by Lemma~\ref{lem:pb1-to-qb1-compatibility},
so is $\Fcal^{[i,P]}$.
\end{proof}

\section{Applications}\label{sect:applications}

In this section, we apply the framework developed in section~\ref{sect:forcing-rt12} to derive our main theorems. 

\subsection{Preservation of non-$\Sigma^0_2$ definitions}

Our first application shows the existence, for every instance of the pigeonhole principle, of a solution which does not collapse the definition of a non-$\Sigma^0_2$ set into a $\Sigma^0_2$ one. This corresponds to preservation of one non-$\Sigma^0_2$ definition, following the terminology of Wang~\cite{Wang2014Definability}.

\begin{theorem}\label{thm:rt12-preservation-non-sigma2}
Fix a non-$\Sigma^0_2$ set $B$. For every set $A$, there is an infinite set $H \subseteq A$
or $H \subseteq \overline{A}$ such that $B$ is not $\Sigma^{0,H}_2$.
\end{theorem}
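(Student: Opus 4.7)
The plan is to apply the pigeonhole forcing $\Pb_1$ of Section~\ref{sect:forcing-rt12} with the partition $A^0 = A$, $A^1 = \overline{A}$, relative to a countable Scott set $\Mcal$ containing $A$, $B$, and $\emptyset'$. The main technical step is a preservation density lemma: for every pair of $\Sigma^0_2$ formulas $\varphi^0(G,x), \varphi^1(G,x)$ with one free set variable and one free integer variable $x$, every $c \in \Pb_1$ and every branch $s$ of $c$, there is an $s$-extension $d \leq_f c$ such that for every valid branch $t$ of $d$ refining $s$, there exist $i < 2$ and $w \in \omega$ with either $w \notin B$ and $d^{[i,t]} \Vdash \varphi^i(G, w)$, or $w \in B$ and $d^{[i,t]} \Vdash \neg \varphi^i(G, w)$. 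To prove it, set $W = \{w : c \qvdash_s \varphi^0(G, w) \vee \varphi^1(G, w)\}$; by Lemma~\ref{lem:rt12-forcing-question-complexity}, $W$ is $\Sigma^0_1(\Mcal)$, hence arithmetical, hence $\Sigma^0_2$. Since $B$ is not $\Sigma^0_2$ there is $w \in W \Delta B$: if $w \in W \setminus B$ apply Lemma~\ref{lem:rt12-forcing-question-spec}(a), and if $w \in B \setminus W$ apply Lemma~\ref{lem:rt12-forcing-question-spec}(b).

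Next I would take a sufficiently generic $\Pb_1$-filter $\Fcal$ and an $\Fcal$-projector $P$. A Cholak--Jockusch--Slaman-style pairing argument over all pairs $(\varphi^0, \varphi^1)$ selects a fixed side $i^* < 2$ such that for every such pair, some $c \in \Fcal$ and $w \in \omega$ realize a forced disagreement on side $i^*$. Indeed, if this failed on both sides, one could find pairs $(\varphi^0_0, \varphi^1_0)$ and $(\varphi^0_1, \varphi^1_1)$ witnessing the failure on sides $0$ and $1$ respectively; applying the preservation density to the hybrid pair $(\varphi^0_0, \varphi^1_1)$ would then produce a witnessing extension on one of the two sides, contradicting one of the two assumptions. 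Moreover this side $i^*$ is automatically a 2-generic side, by the following trick: given any $\Sigma^0_2$ formula $\psi(G)$, instantiate the preservation density with the pair $(\psi(G), \psi(G))$ where $\varphi^i(G, x) = \psi(G)$ does not depend on $x$; either outcome of the forced disagreement directly decides $\psi(G)$ at some condition in $\Fcal$ on side $i^*$. Therefore $\Fcal^{[i^*, P]}$ is a 2-generic $\Qb_1$-filter, and $H := G_{\Fcal^{[i^*, P]}}$ is an infinite subset of $A^{i^*}$ by Lemma~\ref{lem:qb1-generic-yields-infinite}.

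To conclude, suppose for contradiction that $B$ is $\Sigma^{0,H}_2$, so $B = \{w : \varphi(H, w)\}$ for some $\Sigma^0_2$ formula $\varphi(G, x)$. Applying the pairing to the pair $(\varphi, \varphi)$ supplies $c \in \Fcal$ and $w \in \omega$ such that $c^{[i^*, P(c)]}$ forces either $\varphi(G, w) \wedge w \notin B$ or $\neg \varphi(G, w) \wedge w \in B$. By Lemma~\ref{lem:qb1-forcing-holds} applied to the 2-generic filter $\Fcal^{[i^*, P]}$, the corresponding fact holds of $H$, contradicting $B = \{w : \varphi(H, w)\}$.

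The main obstacle is the middle step: the pairing must simultaneously fix a side $i^*$ on which preservation disagreement can always be arranged, \emph{and} ensure that this same side carries full 2-genericity (so that Lemma~\ref{lem:qb1-forcing-holds} can be invoked at the end). The key is the observation that the disjunctive preservation density already subsumes the 2-genericity density via the parameter-free instantiation $\varphi^i(G, x) = \psi(G)$, which converts the forced preservation disagreement into a bona fide $\Sigma^0_2$ decision on side $i^*$.
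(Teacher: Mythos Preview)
The overall architecture (density lemma via the forcing question, pairing to fix a side, 2-genericity subsumed by the preservation density through the parameter-free instantiation) is correct and matches the paper. But your choice of Scott set $\Mcal$ is wrong, and the complexity claim it supports is false. You take $\Mcal$ to contain $A$, $B$, and $\emptyset'$, and then assert that $W = \{w : c \qvdash_s \varphi^0(G,w) \vee \varphi^1(G,w)\}$, being $\Sigma^0_1(\Mcal)$, is ``hence arithmetical, hence $\Sigma^0_2$''. Neither implication holds: $A$ and $B$ are arbitrary sets, so $\Sigma^0_1(\Mcal)$ need not even be arithmetical, and arithmetical certainly does not imply $\Sigma^0_2$. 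Worse, since you put $B \in \Mcal$, the set $B$ is itself trivially $\Sigma^0_1(\Mcal)$, so nothing prevents $W = B$ and the diagonalization collapses entirely.

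The fix --- and this is the one non-obvious ingredient you are missing --- is to choose $\Mcal$ so that $B$ is \emph{not} $\Sigma^0_1(\Mcal)$. The paper invokes Wang's result that $\wkl$ admits preservation of one non-$\Sigma^0_1$ definition: since $B$ is not $\Sigma^0_2 = \Sigma^0_1(\emptyset')$, there is a countable Scott set $\Mcal \ni \emptyset'$ over which $B$ remains non-$\Sigma^0_1(\Mcal)$. With this $\Mcal$ the forcing question is $\Sigma^0_1(\Mcal)$ by Lemma~\ref{lem:rt12-forcing-question-complexity}, hence $W \neq B$ directly, and the rest of your argument goes through unchanged. Note that $A$ is deliberately \emph{not} placed in $\Mcal$: the conditions of Definition~\ref{def:rt12-forcing-conditions} only require $\vec{X}', \vec{U}, C \in \Mcal$, and $A$ enters only non-effectively when building extensions in Lemma~\ref{lem:rt12-forcing-question-spec}.
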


Fix $B$ and $A$, and let $A^0 = \overline{A}$ and $A^1 = A$.
By Wang~\cite[Theorem 3.6.]{Wang2014Definability}, there is a countable Turing ideal $\Mcal \models \wkl$ such that $B$ is not $\Sigma^0_1(\Mcal)$ and $\emptyset' \in \Mcal$. We build our infinite set by the notion of forcing $\Pb_1$ within the Turing ideal $\Mcal$.
Fix an enumeration $\varphi_0(G, n), \varphi_1(G, n)$ of all $\Sigma^0_2$ formulas with one set parameter $G$ and one integer parameter $n$.

\begin{lemma}\label{lem:preservation-non-sigma2-density}
Let $\Fcal$ be a sufficiently generic $\Pb_1$-filter and $P$ be an $\Fcal$-projector.
For every pair of $\Sigma^0_2$ formulas $\varphi_0(G, n)$ and $\varphi_1(G, n)$,
there is some $i < 2$ and some $p \in \Fcal^{[i,P]}$ such that
$$
(\exists n \not \in B) p \Vdash \varphi_i(G, n) \vee (\exists n \in B) p \Vdash \neg \varphi_i(G, n)
$$
\end{lemma}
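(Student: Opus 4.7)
The plan is to establish the lemma by a density argument: given any $c_0 \in \Fcal$ with $s_0 = P(c_0)$, I produce an extension $d \in \Fcal$ such that $d^{[i,P(d)]}$ serves as the required $p$ for some $i < 2$. Sufficient genericity of $\Fcal$ then ensures that such a $d$ actually lies in $\Fcal$.

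The core idea is to use the disjunctive forcing question to define a ``shadow'' set that is too simple to equal $B$. Concretely, I would form
$$W = \{ n \in \omega : c_0 \qvdash_{s_0} \varphi_0(G,n) \vee \varphi_1(G,n) \}.$$
By Lemma~\ref{lem:rt12-forcing-question-complexity}, the defining relation is uniformly $\Sigma^0_1(\Mcal)$ in $n$ (since the index of the underlying $\Delta_0$ formulas depends computably on $n$), so $W$ is $\Sigma^0_1(\Mcal)$. By the choice of $\Mcal$, $B$ is not $\Sigma^0_1(\Mcal)$, hence $W \neq B$, and I may pick some $n \in W \, \Delta \, B$.

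If $n \in W \setminus B$, Lemma~\ref{lem:rt12-forcing-question-spec}(a) yields an $s_0$-extension $d \leq_f c_0$ such that for every valid branch $t$ of $d$ with $f(t) = s_0$, some $i < 2$ satisfies $d^{[i,t]} \Vdash \varphi_i(G,n)$. Symmetrically, if $n \in B \setminus W$, Lemma~\ref{lem:rt12-forcing-question-spec}(b) produces an $s_0$-extension $d \leq_f c_0$ such that each such valid branch $t$ provides some $i < 2$ with $d^{[i,t]} \Vdash \neg \varphi_i(G,n)$. By sufficient genericity of $\Fcal$, such a $d$ may be taken inside $\Fcal$.

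Finally I thread the projector through: since $P$ is an $\Fcal$-projector, $P(d)$ is valid in $d$ and $f(P(d)) = P(c_0) = s_0$, so the conclusion of Lemma~\ref{lem:rt12-forcing-question-spec} applies to the branch $t = P(d)$, giving some $i < 2$ with $d^{[i,P(d)]} \Vdash \varphi_i(G,n)$ in the first case, or $d^{[i,P(d)]} \Vdash \neg \varphi_i(G,n)$ in the second. Setting $p = d^{[i,P(d)]} \in \Fcal^{[i,P]}$ finishes the argument. The main bookkeeping obstacle is verifying uniform $\Sigma^0_1(\Mcal)$-ness of $W$ together with the fact that the projected branch $P(d)$ really refines $s_0$, so that the disjunctive conclusion of the forcing question actually transfers to it.
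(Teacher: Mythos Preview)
Your proposal is correct and follows essentially the same approach as the paper: define $W$ via the disjunctive forcing question at the projected branch, use its $\Sigma^0_1(\Mcal)$ complexity to separate it from $B$, split on $n \in W \Delta B$, apply Lemma~\ref{lem:rt12-forcing-question-spec}(a) or (b), and thread the projector through the resulting extension. Your explicit verification that $f(P(d)) = P(c_0) = s_0$ is a nice piece of bookkeeping that the paper leaves implicit.
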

\begin{proof}
Fix some $c \in \Fcal$, and let $s = P(c)$.
Let $W = \{ n : c \qvdash_s \varphi_0(G, n) \vee \varphi_1(G, n) \}$.
By Lemma~\ref{lem:rt12-forcing-question-complexity}, the set $W$ is $\Sigma^0_2$, therefore $W \neq B$. Let $n \in W \Delta B = (W - B) \cup (B - W)$. We have two cases.

Case 1: $n \in W - B$, then $c \qvdash_s \varphi_0(G, n) \vee \varphi_1(G, n)$. By Lemma~\ref{lem:rt12-forcing-question-spec}(a), there is an $s$-extension $d$ of $c$ such that for every valid branch $t$ of $d$ refining the branch $s$ of $c$, $d^{[i,t]} \Vdash \varphi_i(G,n)$ for some $i < 2$.

Case 2: $n \in B - W$, then $c \nqvdash_s \varphi_0(G, n) \vee \varphi_1(G, n)$.
By Lemma~\ref{lem:rt12-forcing-question-spec}(b), there is an $s$-extension $d$ of $c$ such that for every valid branch $t$ of $d$ refining the branch $s$ of $c$, $d^{[i,t]} \Vdash \neg \varphi_i(G,n)$ for some $i < 2$.

By genericity of $\Fcal$, there is such an extension $d \in \Fcal$. Let $P(d) = t$.
Since $t$ is valid in $d$, then either $n \not \in B$ and $d^{[i,t]} \Vdash \varphi_i(G,n)$,
or $n \in B$ and $d^{[i,t]} \Vdash \neg \varphi_i(G,n)$ for some $i < 2$.
\end{proof}

We are now ready to prove Theorem~\ref{thm:rt12-preservation-non-sigma2}.

\begin{proof}[Proof of Theorem~\ref{thm:rt12-preservation-non-sigma2}]
Let $\Fcal$ be a sufficiently generic $\Pb_1$-filter and $P$ be an $\Fcal$-projector.
By Lemma~\ref{lem:preservation-non-sigma2-density}, and by a pairing argument, there is some $i < 2$
such that for every $\Sigma^0_2$ formula $\varphi(G, n)$,
there is some $p \in \Fcal^{[i,P]}$ such that 
$$
(\exists n \not \in B) p \Vdash \varphi(G, n) \vee (\exists n \in B) p \Vdash \neg \varphi(G, n)
$$
In particular, $\Gcal = \Fcal^{[i,P]}$ is 2-generic, so by Lemma~\ref{lem:qb1-generic-yields-infinite}, $G_\Gcal$ is infinite, and by Lemma~\ref{lem:qb1-forcing-holds}, $B$ is not $\Sigma^0_2$. By definition of $\Pb_1$, $G_\Gcal \subseteq A^0$ or $G_\Gcal \subseteq A^1$. This completes the proof of Theorem~\ref{thm:rt12-preservation-non-sigma2}.
\end{proof}

The following corollary would correspond to strong jump cone avoidance of $\rt^1_2$,
following the terminology of Wang~\cite{Wang2014Some}.

\begin{corollary}\label{cor:rt12-jump-cone-avoidance}
Fix a non-$\Delta^0_2$ set $B$. For every set $A$, there is an infinite set $H \subseteq A$
or $H \subseteq \overline{A}$ such that $B$ is not $\Delta^{0,H}_2$.
\end{corollary}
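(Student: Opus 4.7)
The plan is to derive this corollary as an immediate consequence of Theorem~\ref{thm:rt12-preservation-non-sigma2} by a simple case split. Recall that a set is $\Delta^{0,H}_2$ if and only if it is both $\Sigma^{0,H}_2$ and $\Pi^{0,H}_2$, and a set $X$ is $\Pi^{0,H}_2$ iff $\overline{X}$ is $\Sigma^{0,H}_2$. Hence $B$ is $\Delta^{0,H}_2$ iff both $B$ and $\overline{B}$ are $\Sigma^{0,H}_2$. The same equivalence holds unrelativized, so the assumption that $B$ is not $\Delta^0_2$ means at least one of $B$ and $\overline{B}$ fails to be $\Sigma^0_2$.

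Thus I would pick $C \in \{B, \overline{B}\}$ such that $C$ is not $\Sigma^0_2$, and apply Theorem~\ref{thm:rt12-preservation-non-sigma2} to $C$ and the given set $A$. This yields an infinite set $H$ with $H \subseteq A$ or $H \subseteq \overline{A}$ such that $C$ is not $\Sigma^{0,H}_2$. If $B$ were $\Delta^{0,H}_2$, then both $B$ and $\overline{B}$ would be $\Sigma^{0,H}_2$, contradicting the choice of $C$. Therefore $B$ is not $\Delta^{0,H}_2$, and the same $H$ witnesses the corollary. There is no real obstacle here; the work is entirely contained in Theorem~\ref{thm:rt12-preservation-non-sigma2}, and this corollary is just the observation that non-$\Delta^0_2$ can be traded for non-$\Sigma^0_2$ of either $B$ or its complement.
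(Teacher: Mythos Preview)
Your proposal is correct and matches the paper's own proof essentially verbatim: both observe that non-$\Delta^0_2$ implies that $B$ or $\overline{B}$ is non-$\Sigma^0_2$, apply Theorem~\ref{thm:rt12-preservation-non-sigma2} to that set, and deduce that $B$ is not $\Delta^{0,H}_2$.
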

\begin{proof}
Given a non-$\Delta^0_2$ set $B$, either $B$ or $\overline{B}$ is not $\Sigma^0_2(G_\Gcal)$.
By Theorem~\ref{thm:rt12-preservation-non-sigma2}, for every set $A$, 
there is an infinite set $H \subseteq A$
or $H \subseteq \overline{A}$ such that either $B$ or $\overline{B}$ is not $\Sigma^{0,H}_2$,
hence such that $B$ is not $\Delta^{0,H}_2$.
\end{proof}

The second author asked in~\cite[Question 2.7]{Patey2016Open} whether
there is a set such that every infinite subset of it or its complement is of high degree.
We answer negatively.

\begin{corollary}\label{cor:rt12-non-high}
For every set $A$, there is an infinite set $H \subseteq A$
or $H \subseteq \overline{A}$ of non-high degree.
\end{corollary}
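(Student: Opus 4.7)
The plan is to deduce this directly from Corollary~\ref{cor:rt12-jump-cone-avoidance} by taking $B = \emptyset''$. Recall that a degree $\mathbf{d}$ is high precisely when $\mathbf{d}' \geq_T \emptyset''$, which is equivalent to saying that $\emptyset''$ is computable from $H'$, i.e., $\emptyset''$ is $\Delta^{0,H}_2$. So the class of non-high sets is exactly $\{H : \emptyset'' \text{ is not } \Delta^{0,H}_2\}$.

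Since $\emptyset''$ is a genuinely $\Sigma^0_2$-complete set, it is not $\Delta^0_2$. Therefore Corollary~\ref{cor:rt12-jump-cone-avoidance}, applied to the non-$\Delta^0_2$ set $B = \emptyset''$, yields for every $A$ an infinite set $H \subseteq A$ or $H \subseteq \overline{A}$ such that $\emptyset''$ is not $\Delta^{0,H}_2$. By the equivalence above, this $H$ has non-high degree, which is exactly the conclusion. There is no real obstacle here: the entire content of the corollary is the observation that \textbf{non-high} is a special case of \textbf{$\emptyset''$ fails to be $\Delta^{0,H}_2$}, and the work has already been done in the preceding theorem and corollary.
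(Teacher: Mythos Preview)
Your proposal is correct and matches the paper's own proof exactly: the paper simply says ``Apply Corollary~\ref{cor:rt12-jump-cone-avoidance} to $B = \emptyset''$.'' Your additional remarks justifying that $\emptyset''$ is not $\Delta^0_2$ and that highness of $H$ is equivalent to $\emptyset''$ being $\Delta^{0,H}_2$ are precisely the routine unpacking the paper leaves implicit.
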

\begin{proof}
Apply Corollary~\ref{cor:rt12-jump-cone-avoidance} to $B = \emptyset''$.
\end{proof}

\subsection{Preservation of $\Delta^0_2$ hyperimmunities}

Our second application concerns the ability to prevent solutions from computing fast-growing functions. By Martin's theorem~\cite{Martin1966Classes}, a set is of high degree iff it computes a function dominating every computable function. Therefore, Corollary~\ref{cor:rt12-non-high} already shows that an instance cannot force its solutions to compute arbitrarily fast growing functions. We shall now refine this result by proving that an instance cannot help dominating a fixed non-computably dominated function. Recall the definition of hyperimmunity.

\begin{definition}
A function $f$ \emph{dominates} a function $g$ if $f(x) \geq g(x)$
for every $x$. A function $f$ is \emph{$X$-hyperimmune} if it is not dominated by any $X$-computable function.
\end{definition}

\begin{theorem}\label{thm:rt12-preservation-delta2-hyperimmunity}
Fix a $\emptyset'$-hyperimmune function $f$. For every set $A$, there is an infinite set $H \subseteq A$
or $H \subseteq \overline{A}$ such that $f$ is $H'$-hyperimmune.
\end{theorem}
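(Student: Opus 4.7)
The plan is to imitate the proof of Theorem~\ref{thm:rt12-preservation-non-sigma2}, replacing the pairing against a non-$\Sigma^0_2$ set by a compactness-based domination-avoidance argument in the spirit of the hyperimmunity-preservation sketch at the end of Section~\ref{subsect:forcing-question}. The preparatory step is to choose the ambient countable Turing ideal $\Mcal$ so that $\{\emptyset', A\} \subseteq \Mcal$, $\Mcal \models \wkl$, and $f$ is $X$-hyperimmune for every $X \in \Mcal$. Such an $\Mcal$ is constructed by iterating a preservation-of-hyperimmunity basis theorem for $\wkl$ (given $M$ with $f$ being $M$-hyperimmune and an $M$-computable infinite binary tree $T$, there is $P \in [T]$ with $f$ still $(M \oplus P)$-hyperimmune) alternated with closure under Turing reducibility.

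Inside the $\Pb_1$-forcing over $\Mcal$, the key density lemma to prove is: for every pair of indices $e_0, e_1$ and every condition $c$ with $s = P(c)$, there is an $s$-extension $d$ of $c$ such that for every valid branch $t$ of $d$ refining $s$, some $i < 2$ and some $n \in \omega$ satisfy either $d^{[i,t]} \Vdash \Phi_{e_i}^{G'}(n) \uparrow$ or $d^{[i,t]} \Vdash \Phi_{e_i}^{G'}(n)\halts < f(n)$. To prove it, define the partial function $h$ by letting $h(n)$ be the least $L$ such that
$$ c \qvdash_s \bigl[\Phi_{e_0}^{G'}(n)\halts < L\bigr] \vee \bigl[\Phi_{e_1}^{G'}(n)\halts < L\bigr], $$
where the bracketed formulas are expressed in their natural $\Sigma^0_2$ form. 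By Lemma~\ref{lem:rt12-forcing-question-complexity}, $h$ is a partial $\Mcal$-computable function, and two cases arise. If $h$ is total, then by the hyperimmunity assumption on $\Mcal$ some $n$ satisfies $h(n) < f(n)$, and Lemma~\ref{lem:rt12-forcing-question-spec}(a) applied with $L = h(n)$ produces the required $s$-extension $d$, along which every valid refining branch $t$ yields some $i$ with $d^{[i,t]} \Vdash \Phi_{e_i}^{G'}(n)\halts < L \leq f(n)$. If $h(n) \uparrow$ for some $n$, the compactness built into the $\Sigma^0_1(\Mcal)$ definition of $\qvdash_s$ (its witness mentions only finitely many candidate values of the $\Sigma^0_2$ existentials, bounding in particular the value of the computation) yields $c \nqvdash_s [\Phi_{e_0}^{G'}(n)\halts] \vee [\Phi_{e_1}^{G'}(n)\halts]$, so Lemma~\ref{lem:rt12-forcing-question-spec}(b) produces an $s$-extension $d$ along which every valid refining branch $t$ yields some $i$ with $d^{[i,t]} \Vdash \Phi_{e_i}^{G'}(n)\uparrow$.

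The theorem then follows by taking a sufficiently generic $\Pb_1$-filter $\Fcal$ together with an $\Fcal$-projector $P$, and running the standard pairing argument over pairs $(e_0, e_1)$: this yields a single side $i < 2$ such that for every Turing index $e$, some condition in $\Fcal^{[i,P]}$ forces the diagonalization against $\Phi_{e}^{G'}$. By Lemma~\ref{lem:qb1-forcing-holds} and Lemma~\ref{lem:qb1-generic-yields-infinite}, the resulting $H = G_{\Fcal^{[i,P]}}$ is an infinite subset of $A^i$ for which $f$ is $H'$-hyperimmune.

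The main obstacle is the preparatory construction of $\Mcal$: because the forcing question is only known to be $\Sigma^0_1(\Mcal)$ rather than $\Sigma^0_1(\emptyset')$, the bare hypothesis of $\emptyset'$-hyperimmunity of $f$ must be upgraded to $X$-hyperimmunity for every $X \in \Mcal$, and this strengthening must be reconciled with $\Mcal \models \wkl$ through the preservation-of-hyperimmunity basis theorem. The compactness observation underlying the divergence case of the density lemma is the secondary delicate point but follows directly from the finite-support nature of the $\Sigma^0_1(\Mcal)$ witness.
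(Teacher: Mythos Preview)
Your overall strategy matches the paper's almost exactly: build a Scott set $\Mcal$ over $\emptyset'$ in which $f$ stays hyperimmune, run $\Pb_1$-forcing, use the $\Sigma^0_1(\Mcal)$ forcing question to define a partial $\Mcal$-computable search function, split into the total/partial cases, and finish with the pairing argument. The compactness observation you single out is exactly the paper's Lemma~\ref{lem:rt12-forcing-question-compact}, and your density lemma is the paper's Lemma~\ref{lem:preservation-delta2-hyperimmunity-density}.

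There is, however, one genuine error in your setup: you require $A \in \Mcal$. This is both unnecessary and in general impossible. The theorem quantifies over \emph{all} sets $A$, so $A$ may well compute a function dominating $f$ (for instance $A$ could code $f$ outright), in which case $f$ cannot be $A$-hyperimmune and no ideal containing $A$ can have the property you demand. The paper avoids this by keeping $A$ out of $\Mcal$ entirely: in the definition of $\Pb_1$ only $\vec{X}'$, $\vec{U}$, and $C$ are required to lie in $\Mcal$, while $A$ enters solely through the finitary constraint $\sigma^i_s \subseteq A^i$. The forcing question (Lemma~\ref{lem:rt12-forcing-question-complexity}) is $\Sigma^0_1(\Mcal)$ with no reference to $A$, so your function $h$ is already $\Mcal$-computable without $A \in \Mcal$; the set $A$ is only needed to \emph{find} the extension in Lemma~\ref{lem:rt12-forcing-question-spec}, and there mere existence suffices. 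Once you drop $A$ from $\Mcal$, your argument goes through and coincides with the paper's.
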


Fix $f$ and $A$, and let $A^0 = \overline{A}$ and $A^1 = A$.
By Jockusch and Soare~\cite{Jockusch197201}, there is a countable Turing ideal $\Mcal \models \wkl$ such that $f$ is $X$-hyperimmune for every $X \in \Mcal$, and $\emptyset' \in \Mcal$. We build our infinite set by the notion of forcing $\Pb_1$ within the Turing ideal $\Mcal$.

\begin{lemma}\label{lem:rt12-forcing-question-compact}
Let $s$ be a branch in $c \in \Pb_1$ and let $\Phi_{e_0}(G, n)$ and $\Phi_{e_1}(G, n)$ be two $\Sigma^0_2$ formulas. Suppose that 
$$
c \qvdash_s (\exists n)\Phi_{e_0}(G, n) \vee (\exists n)\Phi_{e_1}(G, n)
$$
then there is a finite set $U$ such that
$$
c \qvdash_s (\exists n \in U)\Phi_{e_0}(G, n) \vee (\exists n \in U)\Phi_{e_1}(G, n)
$$
\end{lemma}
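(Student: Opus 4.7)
The plan is to feed the hypothesis through the compactness argument already embedded in the proof of Lemma~\ref{lem:rt12-forcing-question-complexity}. After flattening each $\Sigma^0_2$ formula $(\exists n)\Phi_{e_i}(G,n)$ to standard form $(\exists p)(\forall m)\Psi_i(G,p,m)$ with $\Psi_i$ a $\Delta_0$ matrix (so that $p$ codes a pair consisting of the outer existential $n$ and the inner existential hidden in $\Phi_{e_i}$), the proof of Lemma~\ref{lem:rt12-forcing-question-complexity} shows that the hypothesis is equivalent to the existence of a finite set $E\subseteq U_s$ such that for each of the finitely many partitions $E = E_0\cup E_1$ there is a finite packet of witnesses $\bigl(F,(\rho_j,p_j)_{j<a},(\mu_j,q_j)_{j<b}\bigr)$ making
\[
\bigcap_{e\in F}\Ucal_e \;\cap\; \bigcap_{j<a}\Ucal_{\zeta(e_0,\sigma^0_s\cup\rho_j,p_j)} \;\cap\; \bigcap_{j<b}\Ucal_{\zeta(e_1,\sigma^1_s\cup\mu_j,q_j)}
\]
fail to be a largeness class.

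Next I would define $U$ to be the finite set of all first coordinates $(p_j)_0$ and $(q_j)_0$ that appear in any of these packets across all partitions of $E$. This $U$ gathers every value of the outer variable $n$ used by any witness, and is finite because there are only $2^{|E|}$ partitions and each witnessing packet is finite.

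To conclude, I would check that the same $E$ and the same packets witness the restricted forcing question. Writing $(\exists n\in U)\Phi_{e_i}(G,n)$ in standard $\Sigma^0_2$ form yields $\Delta_0$ matrices $\widetilde{\Psi}_i(G,p,m) \equiv \Psi_i(G,p,m)\wedge[(p)_0\in U]$; the observation driving everything is that $\Ucal_{\zeta(\widetilde{e}_i,\sigma,p)} = \Ucal_{\zeta(e_i,\sigma,p)}$ whenever $(p)_0\in U$, because the extra conjunct becomes vacuously satisfied and contributes nothing to the negated formula defining $\zeta$. Given any 2-cover $Z^0\cup Z^1 = U_s$, specializing to the induced partition $E_0 = Z^0\cap E$, $E_1 = Z^1\cap E$ and reusing the corresponding packet gives data in which every $(p_j)_0$ and $(q_j)_0$ lies in $U$, so the intersection of $\Sigma^0_1$ classes that appears is literally the same as before and still fails to be a largeness class.

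The only delicate point is the bookkeeping between the two flattenings and the verification of the identity $\Ucal_{\zeta(\widetilde{e}_i,\cdot,p)} = \Ucal_{\zeta(e_i,\cdot,p)}$ on $\{p:(p)_0\in U\}$; once this formal identification is in place, the lemma reduces to finiteness of the collection of partitions of $E$ and of each witnessing packet.
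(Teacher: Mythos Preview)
Your proposal is correct and follows essentially the same approach as the paper: both arguments invoke the compactness characterization from Lemma~\ref{lem:rt12-forcing-question-complexity} to obtain a finite $E\subseteq U_s$, and then harvest the finitely many existential witnesses appearing in the packets associated with the partitions of $E$ to form $U$. Your write-up is in fact more careful than the paper's on two points: you explicitly track the flattening of $(\exists n)\Phi_{e_i}(G,n)$ into standard $\Sigma^0_2$ form (so that $U$ consists of the first coordinates of the paired witnesses), and you explicitly take the union over all $2^{|E|}$ partitions of $E$, whereas the paper's proof writes $U=\{n_0,\dots,n_{a-1}\}\cup\{u_0,\dots,u_{b-1}\}$ as if a single partition were in play.
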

\begin{proof}
Say $c = (\sigma^0_s, \sigma^1_s, X_s, C, U_s : s < k)$
and suppose  $c \qvdash_s (\exists n)\Phi_{e_0}(G, n) \vee (\exists n)\Phi_{e_1}(G, n)$ holds.
Then by compactness, there is a finite set $E \subseteq U_s$ such that for every $E_0 \cup E_1 = E$, there is some finite set $F \subseteq C$, some $\rho_0, \dots, \rho_{a-1} \subseteq E_0$ and $n_0, \dots, n_{a-1} \in \omega$, some $\mu_0, \dots, \mu_{b-1} \subseteq E_1$ and $u_0, \dots, u_{b-1} \in \omega$ such that 
$$
\bigcap_{e \in F} \Ucal_e \bigcap_{j < a} \Ucal_{\zeta(e_0, \sigma^0_s \cup \rho_j, n_j)} \bigcap_{j < b} \Ucal_{\zeta(e_1, \sigma^1_s \cup \mu_j, u_j)}
$$
is not a largeness class.
Then letting $U = \{n_0, \dots, n_{a-1}\} \cup \{u_0, \dots, u_{b-1}\}$,
$c \qvdash_s (\exists n \in U)\Phi_{e_0}(G, n) \vee (\exists n \in U)\Phi_{e_1}(G, n)$ also holds.
\end{proof}

\begin{lemma}\label{lem:preservation-delta2-hyperimmunity-density}
Let $\Fcal$ be a sufficiently generic $\Pb_1$-filter and $P$ be an $\Fcal$-projector.
For every pair of Turing functionals $\Phi_0$, $\Phi_1$, 
there is some $i < 2$, some $n$ and some $p \in \Fcal^{[i,P]}$ such that
$$
p \Vdash \Phi_i(G',n)\uparrow \vee p \Vdash \Phi_i(G',n)\downarrow < f(n)
$$
\end{lemma}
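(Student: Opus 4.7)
The plan is to mimic the generic hyperimmunity preservation argument from Section 2.1 (using compactness of the forcing question), adapted to the disjunctive $\Pb_1$-forcing question controlling the second jump. Fix a condition $c = (\sigma^0_s, \sigma^1_s, X_s, C, U_s : s < k) \in \Fcal$ and let $s = P(c)$. The key observation is that for each $n, v \in \omega$, the relation $\Phi_i^{G'}(n){\downarrow} = v$ is $\Sigma^0_1(G')$, hence a $\Sigma^0_2(G)$ formula (uniformly in $i, n, v$), since $G'$ is $\Sigma^0_1(G)$. Thus the disjunctive forcing question $c \qvdash_s (\exists v \in U)\Phi_0(G', n){\downarrow} = v \vee (\exists v \in U)\Phi_1(G', n){\downarrow} = v$ is defined for every finite set $U$ and every $n$.

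Define a partial function $g$ on input $n$ by effectively searching for a finite set $U \subseteq \omega$ such that
$$
c \qvdash_s (\exists v \in U)\Phi_0(G', n){\downarrow} = v \ \vee\ (\exists v \in U)\Phi_1(G', n){\downarrow} = v;
$$
if such a $U$ is found, set $g(n) = \max U + 1$; otherwise $g(n){\uparrow}$. By Lemma~\ref{lem:rt12-forcing-question-complexity}, this forcing question is $\Sigma^0_1(\Mcal)$, so $g$ is partial computable in some parameter $X \in \Mcal$. By the choice of $\Mcal$, the function $f$ is $X$-hyperimmune.

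Split into two cases. If $g$ is total, $X$-hyperimmunity of $f$ yields some $n$ with $g(n) < f(n)$, so the witnessing $U$ satisfies $\max U < f(n)$. Applying Lemma~\ref{lem:rt12-forcing-question-spec}(a) to the positive side of the forcing question produces an $s$-extension $d \leq_f c$ such that for every valid branch $t$ of $d$ refining $s$, there is some $i < 2$ with $d^{[i,t]} \Vdash (\exists v \in U)\Phi_i(G', n){\downarrow} = v$, which in particular forces $\Phi_i(G', n){\downarrow} < f(n)$. If instead $g(n){\uparrow}$ for some $n$, then the compactness of the forcing question (Lemma~\ref{lem:rt12-forcing-question-compact}, applied to the $\Sigma^0_2$ formulas $\Phi_i(G', n){\downarrow}$) gives $c \nqvdash_s \Phi_0(G', n){\downarrow} \vee \Phi_1(G', n){\downarrow}$. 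Then Lemma~\ref{lem:rt12-forcing-question-spec}(b) supplies an $s$-extension $d \leq_f c$ such that for every valid branch $t$ refining $s$, there is some $i < 2$ with $d^{[i,t]} \Vdash \Phi_i(G', n){\uparrow}$.

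In both cases, genericity of $\Fcal$ puts such a $d$ into $\Fcal$; since $P(d)$ is valid in $d$ and refines $s = P(c)$, the condition $p = d^{[i, P(d)]} \in \Fcal^{[i,P]}$ achieves the density requirement for the appropriate $i < 2$. The main obstacles are purely bookkeeping: verifying that the \emph{halts}-statements really do fit the $\Sigma^0_2$ framework on which the disjunctive forcing question and its compactness lemma are built, and accepting that the side $i$ witnessing success may a priori depend on the valid branch $t$ selected by the projector $P$ — but this branch-dependence is exactly what the disjunctive $\qvdash_s$ relation was designed to absorb, so no further pairing is needed at this stage (it is handled globally in the subsequent proof of Theorem~\ref{thm:rt12-preservation-delta2-hyperimmunity}).
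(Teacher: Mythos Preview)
Your proposal is correct and follows essentially the same approach as the paper: fix $c\in\Fcal$ with $s=P(c)$, define the partial $\Mcal$-computable search function $g$ for a finite bound $U$ witnessing the disjunctive forcing question, split on totality of $g$ using $\Mcal$-hyperimmunity of $f$ and compactness (Lemma~\ref{lem:rt12-forcing-question-compact}), then invoke Lemma~\ref{lem:rt12-forcing-question-spec}(a)/(b) and genericity. The only cosmetic difference is your choice $g(n)=\max U+1$ versus the paper's $g(n)=\max U$, which is immaterial.
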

\begin{proof}
Fix some $c \in \Fcal$, and let $s = P(c)$.
Let $g$ be the partial $\Mcal$-computable function which on input $n$ searches for a finite set $U$ such that
$$
c \qvdash_s \Phi_0(G', n)\downarrow \in U \vee \Phi_1(G', n)\downarrow \in U
$$
If found, $g(n) = \max U$, otherwise $g(n)\uparrow$.
We have two cases.

Case 1: $g$ is total. Since $f$ is $\Mcal$-hyperimmune, there is some $n$ such that $g(n) < f(n)$. Let $U$ be the finite set witnessing $g(n)\downarrow$. Then 
$c \qvdash_s \Phi_0(G', n)\downarrow \in U \vee \Phi_1(G', n)\downarrow \in U$. By Lemma~\ref{lem:rt12-forcing-question-spec}(a), there is an $s$-extension $d$ of $c$ such that for every valid branch $t$ of $d$ refining the branch $s$ of $c$, $d^{[i,t]} \Vdash \Phi_i(G', n)\downarrow \in U$ for some $i < 2$.

Case 2: $g$ is partial, say $g(n)\uparrow$ for some $n$. Then by Lemma~\ref{lem:rt12-forcing-question-compact},
$c \nqvdash_s \Phi_0(G', n)\downarrow \vee \Phi_1(G', n)\downarrow$. By Lemma~\ref{lem:rt12-forcing-question-spec}(b), there is an $s$-extension $d$ of $c$ such that for every valid branch $t$ of $d$ refining the branch $s$ of $c$, $d^{[i,t]} \Vdash \Phi_i(G', n) \uparrow$ for some $i < 2$.

By genericity of $\Fcal$, there is such an extension $d \in \Fcal$. Let $P(d) = t$.
Since $t$ is valid in $d$, then either $d^{[i,t]} \Vdash \Phi_i(G', n)\uparrow$,
or $d^{[i,t]} \Vdash \Phi_i(G', n)\downarrow \in U$ for some $i < 2$ and some set $U < f(n)$.
\end{proof}

\begin{proof}[Proof of Theorem~\ref{thm:rt12-preservation-delta2-hyperimmunity}]
Let $\Fcal$ be a sufficiently generic $\Pb_1$-filter and $P$ be an $\Fcal$-projector.
By Lemma~\ref{lem:preservation-non-sigma2-density}, and by a pairing argument, there is some $i < 2$
such that for every Turing functional $\Phi$,
there is some $p \in \Fcal^{[i,P]}$ such that 
$$
p \Vdash \Phi(G',n)\uparrow \vee p \Vdash \Phi(G',n)\downarrow < f(n)
$$
In particular, $\Gcal = \Fcal^{[i,P]}$ is 2-generic, so by Lemma~\ref{lem:qb1-generic-yields-infinite}, $G_\Gcal$ is infinite, and by Lemma~\ref{lem:qb1-forcing-holds}, $f$ is $G_\Gcal'$-hyperimmune. By definition of $\Pb_1$, $G_\Gcal \subseteq A^0$ or $G_\Gcal \subseteq A^1$. This completes the proof of Theorem~\ref{thm:rt12-preservation-delta2-hyperimmunity}.
\end{proof}

Note that Theorem~\ref{thm:rt12-preservation-delta2-hyperimmunity}
cannot be extended to preservation of two $\emptyset'$-hyperimmune 
functions simultaneously, as there exists a bi-infinite set $A$
such that, $p_A$ and $p_{\overline{A}}$ are both $\emptyset'$-hyperimmune,
where $p_X$ is the function which on input $n$ returns the $n+1$st element of $X$.
For every infinite subset $H$ of $A$, $p_H$ dominates $p_A$, and for every infinite subset $H$
of $\overline{A}$ dominates $p_{\overline{A}}$. In both cases, $p_A$ and $p_{\overline{A}}$ cannot both be $H'$-hyperimmune for any solution $H$.

\subsection{Low${}_3$ solutions}

An effectivization of the forcing construction enables us to obtain lowness results for the infinite pigeonhole principle. The existence of low${}_2$ solutions for $\Delta^0_2$ sets, and of low${}_2$ cohesive sets for computable sequences of sets, was proven by Cholak, Jockusch and Slaman~\cite[sections 4.1 and 4.2]{Cholak2001strength}. The existence of low${}_3$ cohesive sets for $\Delta^0_2$ sequences of sets was proven by Wang~\cite[Theorem 3.4]{Wang2014Cohesive}. Wang~\cite[Questions 6.1 and 6.2]{Wang2014Cohesive} and the second author~\cite[Question 5.4]{Patey2016Open} asked whether such results can be generalized for every $\Delta^0_{n+1}$ instances of the pigeonhole and every $\Delta^0_n$ instances of cohesiveness. We answer positively to both questions in the case $n = 2$.

A set $Q$ is of \emph{PA degree} relative to $X$ (written $Q \gg X$) if it computes a member of every non-empty $\Pi^0_1$ class $\Ccal \subseteq 2^\omega$.

\begin{theorem}\label{thm:rt12-delta3-PA-double-jump-solution}
For every $\Delta^0_3$ set $A$ and every $Q \gg \emptyset''$, there is an infinite set $H \subseteq A$
or $H \subseteq \overline{A}$ such that $H'' \leq Q$.
\end{theorem}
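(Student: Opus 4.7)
The plan is to effectivize the $\Pb_1$-forcing of Section~\ref{sect:forcing-rt12} so that the whole construction of the filter and the projector runs $Q$-computably, and then to read off $H''$ from the sequence of forced $\Sigma^0_2$ facts. This parallels the Cholak--Jockusch--Slaman low${}_2$ argument for $\Delta^0_2$ instances, shifted up one level in the jump hierarchy (and is in the same spirit as Wang's low${}_3$ construction for $\Delta^0_2$ cohesiveness).

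For the setup, let $A^0 = \overline{A}$ and $A^1 = A$; since $A$ is $\Delta^0_3$ we have $A \leq_T \emptyset'' \leq_T Q$. Using the PA-over-$\emptyset''$ hypothesis on $Q$, I would first fix a countable Scott set $\Mcal \models \wkl$ with $\emptyset'' \in \Mcal$ (hence $A \in \Mcal$), coded by some $B$, so that every invocation of $\wkl$ inside $\Mcal$ that arises during the construction can be resolved $Q$-computably. This is the standard reduction of producing a Scott set to applying a PA-degree oracle.

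The construction itself enumerates the $\Sigma^0_2$ formulas $\varphi_0(G), \varphi_1(G), \dots$ and builds a decreasing chain $c_0 \geq c_1 \geq \dots$ of $\Pb_1$-conditions together with branch choices $s_0, s_1, \dots$ forming an $\Fcal$-projector. At stage $k$ I would (i) ask the disjunctive forcing question $c_k \qvdash_{s_k} \varphi_k(G) \vee \varphi_k(G)$, which by Lemma~\ref{lem:rt12-forcing-question-complexity} is $\Sigma^0_1(\Mcal)$; (ii) apply Lemma~\ref{lem:rt12-forcing-question-spec} to obtain an $s_k$-extension $c_{k+1}$ forcing $\varphi_k$ or $\neg \varphi_k$ on every valid refinement of $s_k$ (uniformly $A \oplus \Mcal$-computably in the answer); and (iii) select a valid branch $s_{k+1}$ of $c_{k+1}$ refining $s_k$, whose existence follows from Lemma~\ref{lem:rt12-has-valid-branch} applied to the sub-forcing below $s_k$ together with the upward preservation of validity (Lemma~\ref{lem:pb1-to-qb1-compatibility}). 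The filter $\Fcal := \{c_k : k \in \omega\}$ is then 2-generic and $P \colon c_k \mapsto s_k$ is an $\Fcal$-projector; by the pairing argument concluding Section~\ref{subsect:rt12-forcing-question}, some side $i < 2$ makes $\Fcal^{[i,P]}$ a 2-generic $\Qb_1$-filter, so $H := G_{\Fcal^{[i,P]}}$ is an infinite subset of $A^i$ by Lemma~\ref{lem:qb1-generic-yields-infinite}, and Lemma~\ref{lem:qb1-forcing-holds} lets $Q$ recover $H''$ from the recorded forcings, giving $H'' \leq_T Q$.

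The hard part is that the forcing question at each stage is $\Sigma^0_1(\Mcal)$ and hence not directly decidable from $Q$ --- a naive count places its answers at the level of $B'$, not $Q$ itself. The resolution is to reformulate the construction as selecting a path through a $\Pi^0_1(\emptyset'')$ class of consistent partial constructions, whose nodes encode sequences of purported answers to the forcing questions together with the corresponding conditions, and to invoke $Q \gg \emptyset''$ to produce such a path. Keeping the parameters of successive conditions within a complexity window where this class remains $\Pi^0_1(\emptyset'')$, while still being able to apply $\wkl$ inside $\Mcal$ at each extension, is the core technical subtlety, and is the same mechanism that underlies the original CJS low${}_2$ proof and its jump analogues.
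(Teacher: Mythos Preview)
Your outline has the right overall shape, and your closing paragraph correctly identifies that $Q \gg \emptyset''$ must be used to select a path through a $\Pi^0_1(\emptyset'')$ class rather than to decide the forcing questions directly. Two concrete choices, however, prevent the argument from going through.

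First, the ideal $\Mcal$ must contain $\emptyset'$, not $\emptyset''$, and be coded by some $B$ with $B' \leq_T \emptyset''$ (one application of the relativized low basis theorem over~$\emptyset'$). The forcing question of Lemma~\ref{lem:rt12-forcing-question-complexity} is $\Sigma^0_1(C \oplus U_s \oplus \emptyset')$; with your choice $\emptyset'' \in \Mcal$ this becomes $\Sigma^0_1(\emptyset'')$ at best, which is genuinely $\Sigma^0_3$ and cannot be absorbed into a $\Pi^0_1(\emptyset'')$ class. With the correct $\Mcal$ the question is $\Sigma^0_1(B)$, decidable by $B' \leq_T \emptyset''$, and the extension in Lemma~\ref{lem:rt12-forcing-question-spec} is found $A \oplus B$-uniformly, hence $\emptyset''$-uniformly since $A \leq_T \emptyset''$. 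There is no need to place $A$ inside~$\Mcal$.

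Second, your step~(iii) --- selecting at each stage a valid branch $s_{k+1}$ of $c_{k+1}$ refining $s_k$ --- can simply fail. An $s_k$-extension splits $U_{s_k}$ into pieces $U_{s_k} \cap R_t$, and a largeness class need not be partition regular on its members: it is entirely possible that $U_{s_k} \in \bigcap_{e \in C} \Ucal_e$ while no $U_{s_k} \cap R_t$ is, validity having migrated to some sibling branch you never extended. Your appeal to Lemma~\ref{lem:rt12-has-valid-branch} ``on the sub-forcing below $s_k$'' is illegitimate because those branches alone do not satisfy the cover property~(c) of Definition~\ref{def:rt12-forcing-conditions}, and Lemma~\ref{lem:pb1-to-qb1-compatibility} gives only upward preservation of validity. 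The paper avoids this by separating the two tasks: it first builds the entire descending sequence $c_0 \geq c_1 \geq \dots$ $\emptyset''$-computably by applying Lemma~\ref{lem:rt12-forcing-question-spec} at \emph{every} branch of $c_n$, so that whichever branches turn out to be valid in $c_{n+1}$ have $\varphi_n$ decided; only afterwards does it use $Q$ to find a path through the $\Pi^0_1(\emptyset'')$, $\emptyset''$-bounded tree of valid branches, which is infinite by Lemma~\ref{lem:rt12-has-valid-branch} together with upward preservation. The projector is computed only after the whole filter is in place, and that separation is exactly what makes the $\Pi^0_1(\emptyset'')$-class argument go through.
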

\begin{proof}
By Simpson~\cite[Lemma VIII.2.9]{Simpson2009Subsystems} and the relativized low basis theorem~\cite{Jockusch197201}, there is a Turing ideal $\Mcal \models \wkl$ containing $\emptyset'$, and countable coded by a set $B$ such that $B' \leq_T \emptyset''$. Let $A^0 = \overline{A}$ and $A^1 = A$. Consider the notion of forcing $\Pb_1$ within the Turing ideal $\Mcal$.

Define an infinite decreasing sequence of $\Pb_1$-conditions $c_0 \geq_{f_0} c_1 \geq_{f_1} \dots$ such that for every $n$, letting $c_n = (\sigma^0_{s,n}, \sigma^1_{s,n}, X_{s,n}, C_n, U_{s,n} : s < k_n)$, and every $s < k_n$, either $s$ is not valid in $c_n$, or there is some $i < 2$ such that
$$
c^{[i,s]}_n \Vdash (\exists a)(\forall b)\Phi_n(G, a, b) \mbox{ or } c^{[i,s]}_n \Vdash (\forall a)(\exists b)\neg \Phi_n(G, a, b)
$$
We claim that there is a $\Pi^0_2(B)$, hence $\Pi^0_3$ such descending sequence.
Indeed, by Lemma~\ref{lem:largeness-class-complexity}, at a given stage $n$, we have a condition $c_n$,
it is $\Pi^0_2(B)$ to determine which branches are valid. Given a valid branch $s$, by Lemma~\ref{lem:rt12-forcing-question-spec}, we can find an $s$-extension $d$ such that 
for every valid branch $t$ of $d$ refining the branch $s$ of $c_n$, there is some side $i < 2$ such that the property holds. 
Such extension is obtained by checking whether the relation $\qvdash_s$ holds, which is $\Sigma^0_1(B)$ by Lemma~\ref{lem:rt12-forcing-question-complexity}, and then finding $d$ $A \oplus \Mcal$-uniformly, hence $\emptyset^{(2)}$-uniformly.

The valid branches of the conditions in the sequence $c_0 \geq_{f_0} c_1 \geq_{f_1} \dots$ form a $\Pi^0_3$ tree whose branches are $\emptyset''$-bounded. Let $\Fcal = \{c_0, c_1, \dots \}$. Since $Q$ is of PA degree above $\emptyset^{(2)}$, $Q$ computes an $\Fcal$-projector $P$, that is, a function such that $P(c_n) < k_n$ is a valid branch in $c_n$, and $f_n(P(c_n)) = P(c_{n-1})$. By the pairing argument, there is a side $i < 2$ such that $\Gcal = \Fcal^{[i,P]}$ is 2-generic. The set $G_\Gcal$ is $Q$-computable. By definition of $\Pb_1$, $G_\Gcal \subseteq A^0$ or $G_\Gcal \subseteq A^1$. By Lemma~\ref{lem:qb1-generic-yields-infinite}, $G_\Gcal$ is infinite, and by Lemma~\ref{lem:qb1-forcing-holds}, $(\exists a)(\forall b)\Phi_n(G_\Gcal, a, b)$ holds iff $c^{[i,P(c_n)]}_n \Vdash (\exists a)(\forall b)\Phi_n(G_\Gcal, a, b)$. Therefore, $G_\Gcal^{(2)} \leq_T Q$. This completes the proof of Theorem~\ref{thm:rt12-delta3-PA-double-jump-solution}.
\end{proof}

\begin{corollary}\label{cor:rt12-delta3-low3-solution}
For every $\Delta^0_3$ set $A$, there is an infinite set $H \subseteq A$
or $H \subseteq \overline{A}$ of low${}_3$ degree.
\end{corollary}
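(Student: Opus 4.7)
The plan is to derive the corollary directly from Theorem~\ref{thm:rt12-delta3-PA-double-jump-solution} by choosing $Q$ to be a set of PA degree over $\emptyset''$ whose jump is computable from $\emptyset'''$. Recall that a set $H$ is low${}_3$ exactly when $H''' \leq_T \emptyset'''$, i.e.\ when $(H'')' \leq_T \emptyset'''$. So it suffices to bound $H''$ by something whose own jump is $\emptyset'''$-computable.

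First I would invoke the low basis theorem of Jockusch and Soare~\cite{Jockusch197201}, relativized to $\emptyset''$: the $\Pi^0_1(\emptyset'')$ class of completions of Peano arithmetic relative to $\emptyset''$ is nonempty, so it has a member $Q \gg \emptyset''$ whose jump satisfies $Q' \leq_T (\emptyset'')' = \emptyset'''$. Then I would apply Theorem~\ref{thm:rt12-delta3-PA-double-jump-solution} to the given $\Delta^0_3$ set $A$ and to this particular $Q$, obtaining an infinite set $H \subseteq A$ or $H \subseteq \overline{A}$ with $H'' \leq_T Q$. Taking a further jump gives $H''' = (H'')' \leq_T Q' \leq_T \emptyset'''$, so $H$ is low${}_3$, as desired.

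There is no real obstacle here: the only nontrivial ingredients are Theorem~\ref{thm:rt12-delta3-PA-double-jump-solution}, already established, and the relativized low basis theorem, which is standard. The argument is essentially the same relativization trick used to derive the low${}_2$ corollary of Cholak, Jockusch and Slaman~\cite{Cholak2001strength} from their analogous PA-bound theorem, one level of the jump higher.
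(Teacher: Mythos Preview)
Your proposal is correct and essentially identical to the paper's own proof: the paper also takes $Q \gg \emptyset''$ low over $\emptyset''$ via the relativized low basis theorem, applies Theorem~\ref{thm:rt12-delta3-PA-double-jump-solution} to get $H'' \leq_T Q$, and concludes $H^{(3)} \leq_T Q' \leq_T \emptyset^{(3)}$.
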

\begin{proof}
By the relativized low basis theorem~\cite{Jockusch197201}, there is some $Q \gg \emptyset''$
such that $Q' \leq_T \emptyset^{(3)}$. By Theorem~\ref{thm:rt12-delta3-PA-double-jump-solution},
there is an infinite set $H \subseteq A$ or $H \subseteq \overline{A}$ such that $H'' \leq Q$.
In particular, $H^{(3)} \leq_T Q' \leq_T \emptyset^{(3)}$.
\end{proof}

\section{Another proof of Liu's theorem}

The original computable analysis of Ramsey's theorem from Jockusch~\cite{Jockusch1972Ramseys} produces solutions of complete degree, while the more effective proofs of Ramsey's theorem for pairs of Seetapun~\cite{Seetapun1995strength} and Cholak, Jockusch and Slaman~\cite{Cholak2001strength} involve a compactness argument producing solutions of PA degree. Since complete degrees are also PA, it was a long-standing open question whether Ramsey's theorem for pairs implies weak K\"onig's lemma, until answered negatively by Liu~\cite{Liu2012RT22}. Later, Flood~\cite{Flood2012Reverse} identified the amount of compactness present in Ramsey's theorem for pairs as a principle called the Ramsey-type weak K\"onig's lemma. In this section, we reprove Liu's theorem by providing an alternative (and arguably simpler) proof of his main combinatorial theorem.

\begin{theorem}[Liu]\label{thm:rt12-strong-pa-avoidance}
For every set $A$, there is an infinite set $H \subseteq A$ or $H \subseteq \overline{A}$
of non-PA degree.
\end{theorem}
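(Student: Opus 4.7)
The approach adapts the $\Pb_1$-forcing machinery of Section~\ref{sect:forcing-rt12}. I fix $A$, set $A^0 = \overline{A}$, $A^1 = A$, and pick a countable Turing ideal $\Mcal \models \wkl$ containing $\emptyset'$ and $A$. Let $W_0 = \{e : \Phi_e(e) = 0\}$ and $W_1 = \{e : \Phi_e(e) = 1\}$; a degree is non-PA precisely when it computes no \emph{separator} of $(W_0, W_1)$, i.e.\ no total $\{0,1\}$-valued function $S$ with $S(e) \neq \Phi_e(e)$ whenever the latter is defined. I would build a sufficiently generic $\Pb_1$-filter $\Fcal$ with an $\Fcal$-projector $P$; the goal is that for some $i<2$, the set $G = G_{\Fcal^{[i,P]}} \subseteq A^i$ is infinite and of non-PA degree.

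The crux is a density lemma: for every Turing functional $\Phi$ and every $\Pb_1$-condition $c$ with branch $s$, there is an $s$-extension $d$ such that for every valid branch $t$ of $d$ refining $s$, some side $i<2$ satisfies $d^{[i,t]} \Vdash$ ``$\Phi^G$ is not a separator of $W_0, W_1$''. To prove it, I introduce a $\Sigma^0_1$-level disjunctive forcing question for $\Pb_1$, parallel to but simpler than the $\Sigma^0_2$-level question of Section~\ref{subsect:rt12-forcing-question}: for $\Sigma^0_1$ formulas $\varphi_0, \varphi_1$, declare $c \qvdash_s \varphi_0(G^0) \vee \varphi_1(G^1)$ to hold iff there is a finite $E \subseteq U_s$ such that every 2-partition $E = E^0 \cup E^1$ admits some side $i$ and finite $\rho \subseteq E^i$ with $\varphi_i(\sigma^i_s \cup \rho)$. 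This relation is $\Sigma^0_1(\Mcal)$, and enjoys the natural analogue of Lemma~\ref{lem:rt12-forcing-question-spec}: a positive answer yields an $s$-extension that directly forces the disjunction on some side of every valid sub-branch; a negative answer yields, via $\wkl$ in $\Mcal$ applied to the $\Pi^0_1(\Mcal)$ class of bad 2-covers of $U_s$, a 2-cover $U_s = Z^0 \cup Z^1$ in $\Mcal$, splitting branch $s$ into two sub-branches with reservoirs $V^i = U_s \cap Z^i$, at least one of which is valid by largeness of $\bigcap_{e \in C}\Ucal_e$.

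I apply this question to the $\Sigma^0_1$ formula $\varphi(G) \equiv \exists n$: $\Phi^G(n)\halts$ with a \emph{bad} value (outside $\{0,1\}$, or equal to $\Phi_n(n) \in \{0,1\}$). A positive answer forces a bad halt on some side, which immediately precludes being a separator. The principal obstacle is the negative case, where the extension only forces ``$\Phi^{G^i}$ never halts badly'' on some side of every valid sub-branch — leaving open that $\Phi^{G^i}$ is a genuine total separator. I would close this gap by iterating the disjunctive $\Sigma^0_1$-question with the formula ``$\Phi^G(n)\halts$'': if it has a negative answer for some $n$, a further split forces $\Phi^{G^i}(n)\uparrow$ on some side, so $\Phi^{G^i}$ is partial and done. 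Otherwise the iterate is positive for every $n$, and the forced function $n \mapsto \Phi^{G^i}(n)$ is $\Mcal$-computable and a separator; running an additional density requirement for each of the countably many separators $S \in \Mcal$ — forcing $\Phi^G \neq S$ either by partiality or by a disagreement at some index outside $W_0 \cup W_1$, where the ``no bad halt'' constraint still permits two distinct halting values — rules out this pathological subcase in the generic limit. Once the density lemma is established, the theorem follows by the standard genericity argument used in Theorems~\ref{thm:rt12-preservation-non-sigma2} and~\ref{thm:rt12-preservation-delta2-hyperimmunity}.
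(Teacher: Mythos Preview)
Your proposal has a genuine gap in the negative case. After forcing ``no bad halts'' on side $i$ of some valid branch and then finding that the disjunctive halt question is positive for every $n$, you assert that ``the forced function $n \mapsto \Phi^{G^i}(n)$ is $\Mcal$-computable and a separator''. This is not justified: a positive answer to $c \qvdash_s \Phi^{G^0}(n)\halts \vee \Phi^{G^1}(n)\halts$ does not pin down a side, let alone a value, and even restricting attention to side $i$, at indices $n$ with $\Phi_n(n)\uparrow$ both values $0$ and $1$ are ``good'', so different extensions may force different outputs. There is no single $\Mcal$-computable function being forced. Consequently your fallback---density requirements forcing $\Phi^G \neq S$ for each separator $S \in \Mcal$---cannot close the gap, since $\Phi^{G^i}$ may well be a separator lying outside $\Mcal$. (Your sketch for meeting those requirements is also incomplete: a negative answer to the disagreement question only forces agreement-or-divergence on one side, not partiality, so nothing is achieved for that $\Phi$ on that side.)

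The paper takes a rather different route. It introduces a simpler forcing $\Pb$ with conditions $(\sigma_s, X_s, C : s < k)$ in which each branch is already committed to one side of $A$ (so no disjunctive question arises), no Turing ideal is used, and $C$ is only required to be computable. The key lemma defines a $\Sigma^0_1$ predicate $P(n,k,v)$ asserting that every $k$-cover of $\omega$ has a part lying in the current largeness class and admitting an extension with $\Phi_e(\sigma_s \cup \rho, n)\halts = v$. If $(\forall k)(\forall n)(\exists v < 2)P(n,k,v)$ held while $(n,\Phi_n(n))$ never appeared, the c.e.\ graph $\{(n,v) : P(n,k,v)\}$ would yield a \emph{computable} $\{0,1\}$-valued DNC function---a contradiction outright, with no reference to any $\Mcal$. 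Hence for every $k$ some $n$ has $P(n,k,\Phi_n(n))$, which shows that adjoining the single index $\xi(e,\sigma_s)$ to $C$ preserves largeness; one then either forces a halt agreeing with the diagonal or invalidates the branch. The alternative case directly produces covers witnessing divergence. The decisive idea you are missing is precisely this drop to a ground-level computable-DNC contradiction, which replaces your attempt to enumerate and diagonalize against separators in $\Mcal$.
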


Before proving Theorem~\ref{thm:rt12-strong-pa-avoidance}, we deduce Liu's theorem from it.

\begin{theorem}[Liu]
For every computable coloring $f : [\omega]^2 \to 2$, there is an infinite 
$f$-homogeneous set of non-PA degree.
\end{theorem}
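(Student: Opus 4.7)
The strategy is to derive the result from Theorem~\ref{thm:rt12-strong-pa-avoidance} via the Cholak--Jockusch--Slaman decomposition of $\rt^2_2$ into cohesiveness and $\rt^1_2$ on a $\Delta^0_2$ partition. Since Theorem~\ref{thm:rt12-strong-pa-avoidance} is a \emph{strong} avoidance statement, valid for arbitrary and not merely computable instances, the usual decomposition will go through once combined with the known non-PA preservation for cohesiveness.

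Given the computable coloring $f : [\omega]^2 \to 2$, I would first form the canonical sequence $R_n = \{ x > n : f(n,x) = 0 \}$ and extract a cohesive set $C$ for $\vec{R}$ of non-PA Turing degree by a standard Mathias forcing with effective reservoirs. Cohesiveness then supplies, for each $n$, a well-defined color $\tilde f(n) = \lim_{x \in C, x \to \infty} f(n, x) \in \{0,1\}$, and the resulting partition $A = \{ n : \tilde f(n) = 0 \}$ is $\Delta^{0,C'}_2$.

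I would then invoke the $C$-relativized form of Theorem~\ref{thm:rt12-strong-pa-avoidance}: the machinery of Section~\ref{sect:forcing-rt12} carries an auxiliary oracle, so the relativized statement reads that for every set $Z$ of non-PA degree and every partition of $\omega$ into two pieces, one piece contains an infinite subset $H$ with $H \oplus Z$ non-PA. Taking $Z = C$ (non-PA by the choice above) and enumerating $C$ as $\{c_0 < c_1 < \dots\}$, I transport $A \cap C$ to a genuine partition of $\omega$, apply the relativized theorem to extract an infinite $H_0 \subseteq \omega$ on one side with $H_0 \oplus C$ non-PA, and pull back to $H = \{c_i : i \in H_0\} \subseteq C$. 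By construction $\tilde f$ is constant on $H$, say equal to $i$, and $H \oplus C \equiv_T H_0 \oplus C$ remains non-PA.

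The set $H$ is then pre-homogeneous for $f$: for each $n \in H$, $f(n,m) = i$ for all sufficiently large $m \in H$. A greedy selection computable in $H$ produces an infinite $f$-homogeneous subset $H^* \subseteq H$ of color $i$. Since $H^* \leq_T H \leq_T H \oplus C$ and the PA degrees are upward closed under Turing reducibility, $H^*$ is of non-PA degree, which gives the theorem. The main obstacle in this outline is not a single hard step but the need to verify that the entire forcing framework of Section~\ref{sect:forcing-rt12}---the partial order $\Pb_1$, the disjunctive forcing question, and the projector argument---is uniformly parametrized by an ambient oracle. This is essentially bookkeeping: the countable Turing ideal $\Mcal \models \wkl$ used in the construction can be taken to contain the oracle $C$, and every lemma relativizes verbatim to $\Mcal$.
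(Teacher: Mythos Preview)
Your proposal is correct and follows essentially the same route as the paper: obtain an $\vec{R}$-cohesive set $C$ of non-PA degree by computable Mathias forcing, apply the $C$-relativized Theorem~\ref{thm:rt12-strong-pa-avoidance} to the induced limit partition to get $H$ with $H \oplus C$ of non-PA degree, and finish with a greedy thinning to a homogeneous set. One minor remark: the paper actually proves Theorem~\ref{thm:rt12-strong-pa-avoidance} with a separate, simpler forcing $\Pb$ introduced in that section rather than the $\Pb_1$ machinery of Section~\ref{sect:forcing-rt12}, but your relativization observation applies equally well to it.
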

\begin{proof}
Let $\vec{R} = R_0, R_1, \dots$ be the computable sequence defined by $R_x = \{ y : f(x, y) = 1 \}$.
By Cholak, Jockusch and Slaman~\cite[Lemma 9.16]{Cholak2001strength}, there is an infinite $\vec{R}$-cohesive set $C = \{ x_0 < x_1 < \dots \}$ of non-PA degree. This fact can be proven using a simple computable Mathias forcing. In particular, $(\forall x \in \omega)\lim_{y \in C} f(x, y)$ exists.
Let $A = \{ n \in \omega : \lim_m f(x_n, x_m) = 1 \}$. Note that $\overline{A} = \{ n \in \omega : \lim_m f(x_n, x_m) = 0 \}$. By a relativization of Theorem~\ref{thm:rt12-strong-pa-avoidance}, there is an infinite set $H \subseteq A$ or $H \subseteq \overline{A}$ such that $H \oplus C$ is of non-PA degree. Suppose that $H \subseteq A$, $H \oplus C$-computably thin out the set $\{ x_n : n \in H \}$ using a greedy argument to obtain an infinite $f$-homogeneous set for color~$1$. The case $H \subseteq \overline{A}$ is similar, and yields and infinite set $f$-homogeneous for color~$0$. 
\end{proof}

We now prove Theorem~\ref{thm:rt12-strong-pa-avoidance} using a variant of the notion of forcing designed in this paper. Fix a set $A$, and let $A^0 = A$ and $A^1 = \overline{A}$.

\begin{definition}
Let $\Pb$ denote the set of conditions $(\sigma_s, X_s, C : s < k)$ such that
\begin{itemize}
	\item[(a)] for every $s < k$, there is some $i < 2$ such that $\sigma_s \cup X_s \subseteq A^i$
	\item[(b)] $X_0, \dots, X_{k-1}$ is a $k$-cover of $\omega - \{0, \dots, \max_s |\sigma_s|\}$
	\item[(c)] $\bigcap_{e \in C} \Ucal_e$ is a largeness class containing only infinite sets
	\item[(d)] $C$ is computable
\end{itemize}
\end{definition}

Note that no effectiveness restriction is imposed on the reservoirs $\vec{X}$.
Their restriction is of combinatorial nature, since their union is required to be cofinite.

\begin{definition}
The partial order on $\Pb$ is defined by $(\tau_s, Y_s, D : s < \ell) \leq (\sigma_s, X_s, C : s < k)$ if there is a function $f : \ell \to k$ such that for every $s < \ell$, $\sigma_{f(s)} \preceq \tau_s$, $Y_s \subseteq X_{f(s)}$, $C \subseteq D$ and $\tau_s - \sigma_{f(s)} \subseteq X_{f(s)}$.
\end{definition}

Again, we write $d \leq_f c$ if $d \leq c$ is witnessed by the function $f$,
and say that the branch $s$ is valid in $c = (\sigma_s, X_s, C : s < k)$
if $X_s \in \bigcap_{e \in C} \Ucal_e$. We write $c^{[s]}$ for $(\sigma_s, X_s, C, X_s)$.
In other words, the branch $s$ is valid in $c$ if $c^{[s]} \in \Qb_1$.
The notion of projector is defined accordingly. 

\begin{lemma}\label{lem:rt12-non-pa-progress-infinity}
Let $c \in \Pb$ be a condition, and $s$ be such that $c^{[s]} \in \Qb_1$. Then for every $n$, there is a simple extension $d \leq c$ such that $d^{[s]} \Vdash (\exists m > n) m \in G$.
\end{lemma}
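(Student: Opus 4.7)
The plan is to construct $d$ by augmenting the stem of branch $s$ with a single element $m \in X_s$ greater than $n$, and shrinking all reservoirs past $m$, while keeping the largeness parameter unchanged at $C$.

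First, I would unpack the hypothesis: since $c^{[s]} \in \Qb_1$, we have $X_s \in \bigcap_{e \in C}\Ucal_e$, a largeness class containing only infinite sets, so $X_s$ is infinite. I would then select $m \in X_s$ satisfying $m > n$ and $m \geq \max_{s' < k}|\sigma_{s'}|$, with the crucial extra property that the shifted reservoir $Y_s := X_s \cap (m+1,\infty)$ still belongs to $\bigcap_{e \in C}\Ucal_e$. Setting $\tau_s := \sigma_s \cup \{m\}$ (so $|\tau_s| = m+1$), $\tau_{s'} := \sigma_{s'}$ for $s' \neq s$, and $Y_{s'} := X_{s'} \cap (m+1,\infty)$ for each $s' < k$, I form $d = (\tau_{s'}, Y_{s'}, C : s' < k)$.

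Next, I would verify that $d \in \Pb$. Clause (a) holds because $\tau_s \cup Y_s \subseteq \sigma_s \cup X_s \subseteq A^i$ (using $m \in X_s$), and analogously for $s' \neq s$. Clause (b), the $k$-cover of $\omega - \{0,\dots,\max_{s'}|\tau_{s'}|\} = \omega - \{0,\dots,m+1\}$, follows immediately from the $k$-cover property of $\vec X$ by intersecting with $(m+1,\infty)$. Clauses (c)--(d) are inherited by setting $D = C$, and the extension relation $d \leq_{\operatorname{id}} c$ is direct. Since $m \in \tau_s$ with $m > n$, the $\Sigma^0_1$-forcing clause at the $\Qb_1$-level gives $d^{[s]} \Vdash (\exists m > n)(m \in G)$ syntactically from the stem alone.

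The main obstacle is the combinatorial step of exhibiting $m$ with $Y_s \in \bigcap_{e \in C}\Ucal_e$, which is what ensures $d^{[s]}$ is a bona fide $\Qb_1$-condition. I expect to argue this by applying largeness to the $(k+1)$-cover of $\omega$ given by $X_0 \cap (m,\infty), \dots, X_{k-1} \cap (m,\infty), [0,m]$: since $[0,m]$ is finite and $\bigcap_{e \in C}\Ucal_e$ contains only infinite sets, some $X_{s'} \cap (m,\infty)$ must lie in the largeness class for every sufficiently large $m$. The set $\{m : X_{s'} \cap (m,\infty) \in \bigcap_{e \in C}\Ucal_e\}$ is downward closed by upward closure of each $\Ucal_e$, so by pigeonhole over the finitely many branches, for some fixed $s^*$ this set is all of $\omega$. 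The delicate point, to be handled by a finer choice of cover adapted to branch $s$ and by exploiting the specific hypothesis $X_s \in \bigcap_{e \in C}\Ucal_e$, is to arrange that one may take $s^* = s$, so that arbitrarily large tails of $X_s$ (past any prescribed $m+1 > n$) remain in the largeness class.
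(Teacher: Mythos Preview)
Your construction is the paper's: pick $m \in X_s$ with $m > n$, set $\tau_s = \sigma_s \cup \{m\}$, shrink all reservoirs past $m$, keep $C$ unchanged. The paper's proof stops there. (The extra condition $m \geq \max_{s'}|\sigma_{s'}|$ you impose is harmless but not needed for clause~(b); the cover property survives in either case.)

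Where you diverge is in demanding that $Y_s = X_s \cap (m+1,\infty)$ remain in $\bigcap_{e \in C}\Ucal_e$, i.e.\ that $d^{[s]} \in \Qb_1$. This is unnecessary. The relation $d^{[s]} \Vdash (\exists m > n)\, m \in G$, read via Definition~\ref{def:qb-forcing-relation}(a) after padding to $\Sigma^0_2$ with a vacuous inner universal, unfolds to $(\exists m > n)(\forall \tau \subseteq Y_s)\, m \in \tau_s \cup \tau$, which is witnessed by $m \in \tau_s$ regardless of whether $Y_s$ lies in the largeness class. Only the stem $\tau_s$ and the reservoir $Y_s$ enter that clause; membership of the tuple in $\Qb_1$ plays no role. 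In the eventual application (proof of Theorem~\ref{thm:rt12-strong-pa-avoidance}), validity of branch $s$ in the extension is recovered a posteriori from the projector on the generic filter, not from this lemma.

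Your attempt to secure the extra requirement has exactly the gap you flag: pigeonhole over the branches yields some $s^*$ whose tails all stay in the largeness class, but nothing forces $s^* = s$, and the hypothesis $X_s \in \bigcap_{e \in C}\Ucal_e$ does not by itself make tails of $X_s$ stay in the class. The paper explicitly warns (in the discussion following the definition of the partial order on $\Qb_1$) that membership in $\bigcap_{e \in C}\Ucal_e$ need not survive removing finitely many elements from the reservoir. Drop the requirement and your proof is complete and coincides with the paper's.
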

\begin{proof}
Say $c = (\sigma_s, X_s, C : s < k)$. 
Since $c^{[s]} \in \Qb_1$, then $X_s \in \bigcap_{e \in C} \Ucal_e$.
In particular, $X_s$ is infinite. Let $m \in X_s$ be such that $m > n$.
Then condition $d = (\tau_s, X_s \cap (m, \infty), C : s < k)$ defined by
$\tau_s = \sigma_s \cup \{m\}$, and $\tau_t = \sigma_t$ otherwise, is the desired simple extension of~$c$.
\end{proof}

\begin{definition}
Let $\xi : \omega \times 2^{<\omega} \to \omega$ be the computable function that takes as an index of a Turing functional $\Phi_e(G, n)$, a string $\sigma$ and which gives a code for the open set
$$
\{ X : (\exists \rho \subseteq X - \{ 0, \dots, |\sigma|\})(\exists n)\Phi_e(\sigma \cup \rho, n) \downarrow = \Phi_n(n) \}
$$
\end{definition}

\begin{lemma}\label{lem:rt12-non-pa-progress}
Let $s$ be a branch of a condition $c \in \Pb$,
and let $\Phi_e(G,n)$ be a Turing functional.
Then there is an $s$-extension $d \leq_f c$ such that for every valid branch $t$ in $d$
for which $f(t) = s$,
$$
d^{[t]} \Vdash (\exists n)\Phi_e(G, n) \downarrow = \Phi_n(n)
\mbox{ or } d^{[t]} \Vdash (\exists n)\Phi_e(G, n) \uparrow
$$
\end{lemma}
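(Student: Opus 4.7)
The plan is to mimic the two-case structure of Lemma~\ref{lem:rt12-forcing-question-spec}, with $\xi$ replacing $\zeta$ and a simpler forcing question for the single-generic setting of $\Pb$. Define
$$c \qvdash_s (\exists n)\Phi_e(G,n){\downarrow}=\Phi_n(n)$$
to hold iff there exist a finite $F \subseteq C$ and finite data $(\rho_j, n_j)_{j<a}$ with $\rho_j \subseteq X_s$ and $\Phi_e(\sigma_s \cup \rho_j, n_j){\downarrow}=\Phi_{n_j}(n_j)$, such that the intersection $\bigcap_{e'\in F}\Ucal_{e'}\cap\bigcap_{j<a}\Ucal_{\xi(e,\sigma_s\cup\rho_j)}$ fails to be a largeness class. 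By Lemma~\ref{lem:largeness-class-complexity} and compactness, this relation is $\Sigma^0_1(\Mcal)$.

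In the positive case, I apply $\wkl$ inside $\Mcal$ to the $\Pi^0_1$ class of finite covers $R_0, \dots, R_{\ell-1}$ of $\omega$ each disjoint from the failing intersection, obtaining such a cover in $\Mcal$. I then build the $s$-extension $d$ by splitting branch $s$ into $\ell$ sub-branches $s_t$ with reservoirs $Y_{s_t} = X_s \cap R_t$. Validity of a refined branch $s_t$ forces $R_t$ to omit some $\Ucal_{\xi(e, \sigma_s \cup \rho_j)}$, so I extend $\sigma_s$ by the corresponding $\rho_j$ on that branch, directly witnessing $d^{[s_t]} \Vdash (\exists n)\Phi_e(G, n){\downarrow} = \Phi_n(n)$.

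In the negative case, define $D = C \cup \{\xi(e, \sigma_s \cup \rho) : \rho \text{ a finite subset of } X_s\}$, a computable set. The failure of the question, combined with Lemma~\ref{lem:decreasing-largeness-yields-largeness} applied stage-by-stage, ensures that $\bigcap_{e'\in D}\Ucal_{e'}$ is still a largeness class. Form the simple $s$-extension $d$ by replacing $C$ with $D$ (and leaving $\sigma_s$, $X_s$ alone). On any valid branch $t$ refining $s$ in $d$, the reservoir $X_t$ lies in $\bigcap_{e' \in D}\Ucal_{e'}$ and therefore misses every $\Ucal_{\xi(e, \sigma_s \cup \rho)}$, so no extension of $\sigma_s$ within $X_t$ produces $\Phi_e(\cdot, n){\downarrow} = \Phi_n(n)$. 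A compactness argument over the $\Pi^0_1$ tree in $\Mcal$ of $\{0,1\}$-valued DNR functions consistent with $X_t$-observable outputs of $\Phi_e$ then forces the existence of an $n$ with no halting $X_t$-extension, giving $d^{[t]} \Vdash (\exists n)\Phi_e(G, n)\uparrow$.

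The main obstacle is rigorously completing Case 2: passing from avoidance of $\Phi_n(n)$-equality to outright partiality. The resolution is a Jockusch-style compactness argument on the DNR-consistency tree: were it infinite, $\wkl$ in $\Mcal$ would supply a $\{0,1\}$-valued DNR path consistent with every $X_t$-extension of $\Phi_e$, but then accumulating the corresponding $\xi(e, \cdot)$-indices as further constraints would break the largeness of $\bigcap_{e'\in D}\Ucal_{e'}$, contradicting validity of $t$. Hence the tree is finite and some $n$ is dead in $X_t$. The construction is uniform in indices for $c$, $s$, $e$ (plus the knowledge of which case holds), yielding $d$ $A \oplus \Mcal$-uniformly, which is exactly the form needed for subsequent effective applications such as Theorem~\ref{thm:rt12-strong-pa-avoidance}.
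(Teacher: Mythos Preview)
Your proposal has several fatal gaps, concentrated in the negative case.

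First, a framework error: the forcing $\Pb$ of the Liu section does not involve any Scott set~$\Mcal$. Condition~(d) requires $C$ to be \emph{computable}, and the reservoirs $X_s$ carry no effectiveness restriction whatsoever. Your set $D = C \cup \{\xi(e,\sigma_s\cup\rho) : \rho \subseteq X_s\}$ is indexed by finite subsets of the non-computable set $X_s$, so $D$ is not computable and $(\sigma_s, X_s, D : s < k)$ is not a $\Pb$-condition. Every appeal to $\wkl$ inside $\Mcal$ is likewise spurious here.

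Second, and more fundamentally, your negative case has the direction of the $\Ucal_\xi$ argument reversed. Recall that $\Ucal_{\xi(e,\sigma)}$ is the set of $X$ for which \emph{some} $\rho\subseteq X$ and $n$ satisfy $\Phi_e(\sigma\cup\rho,n){\downarrow}=\Phi_n(n)$. If a branch $t$ is valid in your $d$, then by definition $X_t \in \bigcap_{e'\in D}\Ucal_{e'}$, so in particular $X_t \in \Ucal_{\xi(e,\sigma_s\cup\rho)}$ for every such index. This says extensions hitting $\Phi_n(n)$ \emph{always exist}; it does not say they are avoided. Your sentence ``therefore misses every $\Ucal_{\xi(e,\sigma_s\cup\rho)}$'' is the opposite of what validity gives you, and the subsequent ``DNR-consistency tree'' argument cannot recover from this.

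The paper's proof does not try to transplant the forcing-question machinery of Lemma~\ref{lem:rt12-forcing-question-spec}. Instead it defines a predicate $P(n,k,v)$ asserting that every $k$-cover has a part which is both in $\bigcap_{e\in C,e<k}\Ucal_e$ and contains a witness $\rho$ with $\Phi_e(\sigma_s\cup\rho,n){\downarrow}=v$, and splits on whether $(\forall k)(\forall n)(\exists v<2)P(n,k,v)$. In the positive case each $W_k=\{(n,v):P(n,k,v)\}$ is c.e., so if it never contained $(n,\Phi_n(n))$ one would compute a $\{0,1\}$-valued DNC function; hence for every $k$ some $n$ has $P(n,k,\Phi_n(n))$, which shows $\bigcap_{e\in C}\Ucal_e\cap\Ucal_{\xi(e,\sigma_s)}$ is a largeness class. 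One then either invalidates branch $s$ (adding the single computable index $\xi(e,\sigma_s)$ to $C$) or finds $\rho\subseteq X_s$ forcing the equality. In the failure case one obtains, for a fixed $n$, two $k$-covers (one per value $v$) whose $k^2$ pairwise intersections each force $\Phi_e(G,n){\uparrow}$. The DNC argument is the missing idea in your attempt; it is what lets the paper enlarge $C$ by a single computable index rather than by a non-computable family, and it is what places the ``positive'' outcome on the largeness side rather than the non-largeness side.
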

\begin{proof}
Say $c = (\sigma_s, X_s, C : s < k)$. Define the predicate $P(n, k, v)$ to hold if
$$
(\forall Z_0 \cup \dots \cup Z_{k-1} = \omega)(\exists j < k)Z_j \in \bigcap_{e \in C, e < k} \Ucal_e \mbox{ and } (\exists \rho \subseteq Z_j - \{ 0, \dots, |\sigma_s|\})\Phi_e(\sigma_s \cup \rho, n)\downarrow = v
$$

Suppose first that the following is true for every $k$:
$$
(\forall n)(\exists v < 2)P(n, k, v)
$$
Note that for every $k$, the set $W_k = \{ (n, v) : P(n, k, v) \}$ is c.e. Therefore, there must be some $n \in \omega$ such that $(n, \Phi_n(n)) \in W_k$, otherwise we would compute a $\{0,1\}$-valued DNC function.
Then in particular, 
$$
(\forall Z_0 \cup \dots \cup Z_{k-1} = \omega)(\exists j < k) Z_j \in \bigcap_{e \in C} \Ucal_e \cap \Ucal_{\xi(e, \sigma_s)}
$$
It follows that $\bigcap_{e \in C} \Ucal_e \cap \Ucal_{\xi(e, \sigma_s)}$ is a largeness class. If $X_s \not \in \bigcap_{e \in C} \Ucal_e \cap \Ucal_{\xi(e, \sigma_s)}$,
then $d = (\sigma_s, X_s, C \cup \{\xi(e, \sigma_s)\}, s < k)$ is an $s$-extension of $c$ on which the side $s$ is not valid in $d$, and we are done.
If $X_s \in \bigcap_{e \in C} \Ucal_e \cap \Ucal_{\xi(e, \sigma_s)}$, then there is some $\rho \subseteq X_s - \{ 0, \dots, |\sigma|\}$ such that $\Phi_e(\sigma \cup \rho, n) \downarrow = \Phi_n(n)$.
The condition $d = (\tau_s, X_s \setminus \{0, \dots, |\rho|\}, C : s < k)$ defined by $\tau_s = \sigma_s \cup \rho$ and leaving the other branches unchanged, is an $s$-extension of $c$ 
such that $d^{[s]} \Vdash (\exists n)\Phi_e(G, n) \downarrow = \Phi_n(n)$.

Suppose now that there is a $k$ such that
$$
(\exists n)(\forall v < 2)\neg P(n, k, v)
$$
In particular, for some $k$ and some $n$, we have $k$-covers $Z^0_0 \cup \dots \cup Z^0_{k-1} = \omega$ and $Z^1_0 \cup \dots \cup Z^1_{k-1} = \omega$ such that for $v < 2$, we have
$$
(\forall j < k)Z^v_j \not \in \bigcap_{e \in C, e < k} \Ucal_e \mbox{ or } 
(\forall \rho \subseteq Z^v_j - \{ 0, \dots, |\sigma_s|\})\Phi_e(\sigma_s \cup \rho, n)\uparrow \vee \Phi_e(\sigma_s \cup \rho, n)\downarrow = 1-v
$$
Let $d = (\tau_s, Y_s, C : s < \ell)$ be the $s$-extension of $c$ obtained from $c$ by forking the branch $s$ into $k^2$ parts $s_{0,0}, \dots, s_{k-1,k-1}$, such that
$\tau_{s_{j,t}} = \sigma_s$ and $Y_{s_{j,t}} = X_s \cap Z^0_j \cap Z^1_t$.
Note that $d^{[s_{j,t}]} \Vdash (\exists n)\Phi_e(G, n) \uparrow$ for every $j, t < k$.
This completes the proof of the lemma.
\end{proof}

\begin{proof}[Proof of Theorem~\ref{thm:rt12-strong-pa-avoidance}]
Let $\Fcal$ be a sufficiently generic $\Pb$-filter,
and let $P$ be an $\Fcal$-projector.
Let $\Gcal = \{ c^{[P(c)]} : c \in \Fcal \}$ and $G_\Gcal = \bigcup \{ \sigma : (\sigma, X, C, \omega) \in \Gcal \}$.
By construction, $G_\Gcal \subseteq A^i$ for some $i < 2$.
By Lemma~\ref{lem:rt12-non-pa-progress-infinity}, $G_\Gcal$ is infinite.
By Lemma~\ref{lem:rt12-non-pa-progress}, $G_\Gcal$ is of non-PA degree.
This completes the proof.
\end{proof}


\vspace{0.5cm}

\bibliographystyle{plain}
\bibliography{bibliography}

\end{document}